\newtheorem{thm}{Theorem}[section]
\newtheorem{cor}[thm]{Corollary}
\newtheorem{lem}[thm]{Lemma}
\newtheorem{prop}[thm]{Proposition}
\theoremstyle{remark}
\newtheorem{rmk}[thm]{Remark}
\theoremstyle{definition}
\newtheorem{defi}[thm]{Definition}
\newcommand{\Ebold}{{\bf E}}
\newcommand{\CBbb}{\mathbb C}
\newcommand{\RBbb}{\mathbb R}
\newcommand{\Ecal}{\mathcal E}
\newcommand{\Fcal}{\mathcal F}
\newcommand{\Gcal}{\mathcal G}
\newcommand{\Hcal}{\mathcal H}
\newcommand{\Ocal}{\mathcal O}
\newcommand{\Tcal}{\mathcal T}
\newcommand{\PU}{\mathsf{PU}}
\DeclareMathOperator{\Hom}{Hom}
\DeclareMathOperator{\id}{id}
\DeclareMathOperator{\ch}{ch}
\DeclareMathOperator{\rank}{rank}
\DeclareMathOperator{\dvol}{dvol}
\DeclareMathOperator{\Sym}{Sym}
\DeclareMathOperator{\tr}{tr}
\DeclareMathOperator{\Img}{Im}
\DeclareMathOperator{\Ker}{Ker}
\DeclareMathOperator{\PD}{PD}
\newcommand{\dbar}{\bar\partial}
\numberwithin{equation}{section}
\begin{document}

\title[Admissible Hermitian-Yang-Mills connections]{Admissible Hermitian-Yang-Mills connections over normal varieties}
\author[Chen]{Xuemiao Chen}
%\thanks{}
\address{Department of Pure Mathematics, University of Waterloo, Waterloo, Ontario, Canada N2L 3G1}
\email{x67chen@uwaterloo.ca}

\subjclass[2020]{Primary: 53C07,14J60. Secondary: 14J17}
\keywords{Admissible Hermitian-Yang-Mills connections, Bogomolov-Gieseker inequality, Donaldson-Uhlenbeck-Yau theorem, Hitchin-Kobayashi correspondence, normal K\"ahler varieties, projective varieties with multi-polarizations}

\begin{abstract}
In this paper, we first prove a complete version of the Donaldson-Uhlenbeck-Yau theorem over normal varieties, including normal K\"ahler varieties and projective normal varieties with multiple polarizations. In particular, this gives the polystability of reflexive sheaves under symmetric and exterior powers and tensor products. As a consequence of the singular Donaldson-Uhlenbeck-Yau theorem, the complete Hitchin-Kobayashi correspondence over normal varieties smooth in codimension two is built by showing that an admissible Hermitian-Yang-Mills connection defines a polystable reflexive sheaf. Furthermore, it is shown that the Hermitian-Yang-Mills connection gives a lower bound for the discriminants of any K\"ahler resolutions, which gives a Bogomolov-Gieseker inequality over normal varieties and a characterization of the equality using projectively flat connections. We discuss typical cases including normal surfaces and varieties smooth in codimension two where we could simplify the Bogomolov-Gieseker inequality and endow it with topological meanings. We also prove the Bogomolov-Gieseker inequality for semistable reflexive sheaves and characterize the class of semistable sheaves that satisfy the Bogomolov-Gieseker equality. Finally, we give a new criteria for when a normal K\"ahler variety with trivial first Chern class is a finite quotient of torus. 
\end{abstract}
\maketitle
\tableofcontents

\thispagestyle{empty}
\bibliographystyle{amsplain}
\section{Introduction}
The celebrated Donaldson-Uhlenbeck-Yau theorem (\cite{Donaldson:87a} \cite{UhlenbeckYau:86}) confirms the existence of  Hermitian-Yang-Mills metrics on stable holomorphic vector bundles over compact K\"ahler manifolds. Two important corollaries include the polystability of reflexive sheaves under symmetric and exterior powers and tensor products, and the Bogomolov-Gieseker inequality for stable bundles over K\"ahler manifolds together with a characterization of the equality using projectively flat metrics. There are also further important developments focusing on different aspects:  stable Higgs bundles over K\"ahler manifolds by Simpson (\cite{Simpson:87}); stable bundles over general compact complex manifold with Gauduchon metrics by Li and Yau (\cite{LiYau:87}) and over surface by Buchdahl (\cite{Buchdahl:88}), with a detailed account by Lubke and Teleman (\cite{LubkeTeleman:95}); stable reflexive sheaves over K\"ahler manifolds by Bando and Siu (\cite{BandoSiu:94}); the most general version over Hermitian manifolds by Lubke and Teleman (\cite{TelemanLubke:06}). The other direction of the Hitchin-Kobayashi correspondence is much simpler and first proved by Kobayabshi and Lubke (\cite{Kobayashi:87}). 

The goal of this paper is to prove the singular Donaldson-Uhlenbeck-Yau theorem over normal varieties in complete generality and give various applications in the singular setting. However, the ``easy" direction of the Hitchin-Kobayashi correspondence is much more involved in the singular case in general. As a consequence of the singular Donaldson-Uhlenbeck-Yau theorem, we will build the complete Hitchin-Kobayashi correspondence over normal varieties smooth in codimension two. 

Now we explain our main motivations for this paper. 

First, in recent work \cite{ChenWentworth:21c} where we proved a singular Donaldson-Uhlenbeck-Yau theorem over a class of projective normal varieties, we need to make the assumption that either the projective normal variety has codimension at least three singular set or it comes from the limit of smooth projective varieties with induced K\"ahler metric from an ambient smooth variety. The techniques developed in \cite{ChenWentworth:21c} has its limits when dealing with normal K\"ahler varieties in general.  The first goal of this paper is to remove those assumptions. For this, we will have to deal with more subtle analytic problems and gauge theoretical phenomenon in the critical case due to lack of algebraic geometric tools. There are even simple questions that can be easily solved in the smooth case but not over singular varieties. For example, even in the case of rank $1$ reflexive sheaves, the existence of Hermitian-Yang-Mills metrics is nontrivial due to the fact that we are working with singular varieties. Also, a priori, it is not known that an admissible Hermitian metric on a reflexive sheaf does compute the slope of the sheaf, not even to mention the subsheaf. In particular, the easy direction of Hitchin-Kobayashi correspondence that admitting an admissible Hermitian-Yang-Mills metrics imply the sheaf being polystable is highly nontrivial unlike the smooth case, due to lack of the Chern-Weil formula and a sheaf extension result as \cite{BandoSiu:94} in the singular setting. We could pass by those subtle problems in \cite{ChenWentworth:21c} by working in the projective case with mild singularities and assuming that the metric on the normal variety is induced from some ambient smooth manifolds.  On the other hand, this makes the result not sufficient even when we want to deal some related problems with varieties smooth in codimension two (\cite{CGG:21}). To solve the problem in general, we will need to have new ideas and deal with more subtle phenomenon due to the critical singularity assumption.

The second motivation comes from the searching for a Bogomolov-Gieseker inequality over general normal varieties.
In the general case, the picture for the Bogomolov-Gieseker inequality is, a priori, not clear due to the fact that the singularities of the varieties could contribute. For example, in the normal surface case, it consists of points, thus it is a difficult problem to weigh the contribution of such singularities on both the algebraic and analytic sides. It seems very mysterious about how the algebraic and analytic sides intertwine in general.  It turns out that the correct Bogomolov-Gieseker inequality is surprisingly simple which comes from the basic phenomenon that when we shrink the exceptional divisor on the resolution, there could be loss of Yang-Mills energy. Thus the right picture is given by comparing the quantities on any resolutions and the resulting Hermitian-Yang-Mills connections. This recovers the Bogomolov-Gieseker inequality in the smooth case. At the same time, it is sufficient to give a few interesting and nontrivial applications. 

Another related motivation comes from the recent work \cite{ChenWentworth:21b} and \cite{ChenWentworth:21a} which generalizes the classical Donaldson-Uhlenbeck-Yau theorem to complex manifolds with Hermitian metrics of Hodge-Riemann type. More precisely, it was observed in \cite{ChenWentworth:21b} that a new class of Gauduchon metrics can be given by the so-called balanced metrics of Hodge-Riemann type, of which the multipolarizations provide a natural class of examples.  Namely, given  $(n-1)$ K\"ahler forms $\omega_1\cdots, \omega_{n-1}$,  $\omega_1 \wedge \cdots \omega_{n-1}$ defines a balanced metric. More importantly, the Hodge-Riemann property still holds for $\omega_1\wedge \cdots \omega_{n-2}$ (see \cite{Timorin:98}) which gives the Bogomolov-Gieseker inequality in the multipolarization setting (see Section \ref{DUY for mutipolarizations}). The latter had been known when $\omega_i$ are all Hodge metrics which follows from the restriction theorem (\cite{Langer:04} \cite{HuybrechtsLehn:10}). For a balanced metric coming from the multipolarization above, $[\omega_1 \wedge \cdots \omega_{n-1}]$ lies in the interior of the so-called cone of movable curves of compact complex manifolds (\cite{BDPP:13}). Also, the notion of stable sheaves defined via mutipolarizations (\cite{GrebToma:17}) already plays an important role in compactifying the moduli space of semistable sheaves over projective manifolds in higher dimensions. More generally the slope stability via movable class has been defined and studied on normal varieties, and it is a very important and useful concept in birational geometry (\cite{GKP:16b} \cite{CP:19}).

Given the discussions above, we will study the gauge theoretical side for stable reflexive sheaves over normal varieties endowed with multiple K\"ahler metrics. The key difficulty in this process is to deal with the information near the  singularities of the base variety, together with the new features by involving multiple K\"ahler metrics. 

\subsection{Singular Donaldson-Uhlenbeck-Yau theorem} Suppose $X$ is a normal variety of dimension $n$ endowed with $(n-1)$ K\"ahler forms $\omega_1,\cdots, \omega_{n-1}$ (see Section \ref{Kaehler forms} for definitions).  Through the equation $\omega^{n-1}=\omega_1 \wedge \cdots \omega_{n-1}$, this defines a balanced metric $\omega$. A stable reflexive sheaf can be defined by passing to a particular resolution $p: \hat X \rightarrow X$ so that $\hat{\Ecal}:=(p^*\Ecal)^{**}
$ is stable with respect to $p^*\omega_1 \wedge \cdots p^*\omega_{n-1}.$ 
The stability is independent of the resolutions since everything is smooth in codimension one (see Section \ref{Chern classes}). 

\begin{thm}\label{Singular DUY theorem}
Suppose $\Ecal$ is a stable reflexive sheaf over a normal variety  $(X, \omega_1 \wedge \cdots \omega_{n-1})$. Then there exists an admissible Hermitian-Yang-Mills metric on $\Ecal$.  Moreover, such a metric is unique up to scaling.
\end{thm}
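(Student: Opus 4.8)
\emph{Overall strategy.} The plan is to deduce the statement from the Donaldson--Uhlenbeck--Yau theorem for reflexive sheaves relative to balanced metrics of Hodge--Riemann type on a compact K\"ahler \emph{manifold} --- available from \cite{ChenWentworth:21b} together with the reflexive extension of \cite{BandoSiu:94} --- applied on a resolution of $X$, followed by a limiting argument in which an auxiliary polarization is degenerated. Fix a good K\"ahler resolution $p\colon\hat X\to X$, an isomorphism over $X_{\mathrm{reg}}$, with exceptional locus $Z\subset\hat X$, and put $\hat{\Ecal}=(p^*\Ecal)^{**}$; by hypothesis this is a reflexive sheaf on the compact K\"ahler manifold $\hat X$, stable with respect to $p^*\omega_1\wedge\cdots\wedge p^*\omega_{n-1}$. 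Choose a K\"ahler form $\hat\omega$ on $\hat X$ and, for $t\in(0,1]$, set $\omega_{i,t}=p^*\omega_i+t\hat\omega$, which are genuine K\"ahler forms; by Timorin's theorem \cite{Timorin:98} the product $\Omega_t=\omega_{1,t}\wedge\cdots\wedge\omega_{n-1,t}$ is a balanced metric of Hodge--Riemann type on $\hat X$, and $\Omega_t\to\Omega_0:=p^*(\omega_1\wedge\cdots\wedge\omega_{n-1})$ smoothly on compact subsets of $\hat X\setminus Z$.

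\emph{Stability and HYM metrics on the resolution.} First one checks that there is $t_0\in(0,1]$ so that $\hat{\Ecal}$ is $\Omega_t$-stable for all $t\in(0,t_0]$ with a uniform slope gap: the saturated subsheaves of $\hat{\Ecal}$ with slope not much below $\mu(\hat{\Ecal})$ form a bounded family, so only finitely many numerical classes occur, and the strict slope inequalities for the class $\Omega_0$ persist for the nearby classes $\Omega_t$ --- essentially the openness argument underlying the resolution-independence of stability in Section~\ref{Chern classes}. For each $t\in(0,t_0]$ the compact-K\"ahler Hodge--Riemann DUY theorem then yields an admissible HYM metric $H_t$ on $\hat{\Ecal}$, suitably normalized, with $\sqrt{-1}\Lambda_{\Omega_t}F_{H_t}=\gamma_t\,\id$ away from $\sing(\hat{\Ecal})$, where $\gamma_t$ is the topological constant. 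As $t\to0$ one has $\gamma_t\to\gamma_0$, and, by the projection formula and the discussion of Chern classes and volumes in Section~\ref{Chern classes}, $\gamma_0$ is exactly the HYM constant determined by $\mu(\Ecal)$ on $(X,\omega_1\wedge\cdots\wedge\omega_{n-1})$.

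\emph{The degeneration $t\to0$ and descent to $X$.} This is the core step: one must bound $H_t$ \emph{uniformly in $t$} on compact subsets of $\hat X\setminus Z$. The ingredients are (i) a uniform bound on the Yang--Mills energy $\int_{\hat X}|F_{H_t}|_{\Omega_t}^2\,\dvol_{\Omega_t}$, from the fact that the relevant discriminant integrals are cohomological and $[\Omega_t]\to[\Omega_0]$; (ii) a uniform $L^1(\hat X)$ bound on $\log\tr(H_{t_0}^{-1}H_t)$, from a uniform lower bound for the Donaldson functional --- here the uniform stability gap of (i') is used --- together with the standard comparison estimate; (iii) Moser iteration on balls $B\Subset\hat X\setminus(Z\cup\sing(\hat{\Ecal}))$ applied to the subsolution inequalities for $\tr(H_{t_0}^{-1}H_t)$ and for the analogous quantity with the roles of $H_t,H_{t_0}$ exchanged, which with (i)--(ii) produces a uniform $C^0$ bound on $B$; and then elliptic bootstrapping of the HYM equation relative to $H_{t_0}$, giving uniform $C^\infty_{\mathrm{loc}}$ bounds. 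Equivalently, one runs the Uhlenbeck--Yau continuity/alternative argument and excludes the degenerate case using stability of $\hat{\Ecal}$ for the \emph{boundary} class $\Omega_0$, uniformly in $t$. Passing to a subsequence, $H_t\to H_\infty$ in $C^\infty_{\mathrm{loc}}\bigl(\hat X\setminus(Z\cup\sing(\hat{\Ecal}))\bigr)$ with $\sqrt{-1}\Lambda_{\Omega_0}F_{H_\infty}=\gamma_0\,\id$ there. Since $p$ restricts to a biholomorphism $\hat X\setminus Z\simrightarrow X_{\mathrm{reg}}$ carrying $\Omega_0$ to $\omega_1\wedge\cdots\wedge\omega_{n-1}$, the metric $H_\infty$ descends to a metric $H$ on $\Ecal$ over $X_{\mathrm{reg}}$ (off $\sing(\Ecal)$) with $\sqrt{-1}\Lambda_\omega F_H=\gamma_0\,\id$; it is admissible because $\|\Lambda_\omega F_H\|_{L^\infty}=|\gamma_0|<\infty$ and, by lower semicontinuity of the energy density under $C^\infty_{\mathrm{loc}}$ convergence together with (i), $\int_{X_{\mathrm{reg}}}|F_H|_\omega^2\,\dvol_\omega\le\liminf_{t\to0}\int_{\hat X}|F_{H_t}|_{\Omega_t}^2\,\dvol_{\Omega_t}<\infty$. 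This gives existence.

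\emph{Uniqueness, and the main difficulty.} For uniqueness, let $H,\tilde H$ be two admissible HYM metrics with respect to $\omega_1\wedge\cdots\wedge\omega_{n-1}$, put $h=H^{-1}\tilde H$ (positive, $H$-self-adjoint) and $u=\log h$. Subtracting the two HYM equations and running the Bochner computation of \cite{BandoSiu:94} on $X_{\mathrm{reg}}$ --- legitimate because the finiteness of $\int_{X_{\mathrm{reg}}}|F_H|^2$ and $\int_{X_{\mathrm{reg}}}|F_{\tilde H}|^2$ allows cut-offs near $\sing(\Ecal)\cup X_{\mathrm{sing}}$ to absorb all boundary terms --- gives $\bar\partial_H u=0$; self-adjointness then forces $\nabla_H u=0$, so $|u|_H$ is constant on the connected manifold $X_{\mathrm{reg}}$, whence $u$ is a bounded holomorphic endomorphism of $\Ecal|_{X_{\mathrm{reg}}}$, extends across $\sing(\Ecal)$ to a global endomorphism of $\Ecal$, and equals $(\log\lambda)\,\id$ by simplicity of the stable sheaf $\Ecal$; thus $\tilde H=\lambda H$. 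I expect the main obstacle to lie in the uniform estimates of the degeneration step: controlling $H_t$ as $\Omega_t$ loses positivity along $Z$, ruling out the Uhlenbeck--Yau alternative using only stability for the boundary class $\Omega_0$, and then confirming that the surviving limit is genuinely admissible over the \emph{singular} base $X$ rather than merely over $\hat X\setminus Z$. Note that Yang--Mills energy may legitimately be lost into $Z$ as $t\to0$ --- this, as indicated in the introduction, is the mechanism behind the Bogomolov--Gieseker defect --- so one needs only local control away from $Z$, which is what makes the scheme viable.
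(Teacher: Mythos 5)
Your setup (resolve, perturb by $\omega_{i,t}=p^*\omega_i+t\hat\omega$, get stability for small $t$ via boundedness, apply the smooth multipolarization DUY theorem, and pass to the limit $t\to 0$) is exactly the paper's strategy, and your uniqueness argument is essentially the paper's. But the core of your existence proof --- the claim that $H_t$ is bounded in $C^0_{\mathrm{loc}}$ away from $Z$ uniformly in $t$, deduced from ``a uniform lower bound for the Donaldson functional using the uniform stability gap'' or, ``equivalently, running the Uhlenbeck--Yau alternative uniformly in $t$ and excluding degeneration via stability for the boundary class $\Omega_0$'' --- is precisely the step that is not available, and the paper is structured to avoid it. A Simpson/Uhlenbeck--Yau type bound $M_{\Omega_t}(H_{t_0},H)\geq \epsilon\|\log h\|_{L^1}-C$ has constants controlled by the analytic geometry (Sobolev/Poincar\'e constants) of $\Omega_t$, which degenerates along $Z$; the paper's remark after Theorem \ref{Singular DUY theorem} states explicitly that uniform control of these constants for the perturbed metrics does not follow from known results, which is why neither a heat-flow nor a direct-estimate argument is used. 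Likewise, ``excluding the alternative at the boundary class'' would require producing from the degenerating family a genuine destabilizing subsheaf of $\hat{\Ecal}$ with respect to $\Omega_0$, a limiting/boundedness argument you do not supply. Without the uniform $L^1$ normalization of $\log\tr(H_{t_0}^{-1}H_t)$, your Moser iteration has no input, and the subsequential limit connection may define a holomorphic bundle over $X\setminus Z$ that is \emph{not} isomorphic to $\Ecal$ (jumping along divisors in $X^{reg}$); nothing in your argument rules this out.

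The paper's actual route concedes only what Uhlenbeck compactness gives (locally smooth convergence of the HYM connections $A_i$ away from $Z=X^s\cup\Sigma$, with a possible bubbling set $\Sigma$), so the limit is a priori some sheaf $\Ecal_\infty$, and the real work is identifying it with $\Ecal$: (1) a nontrivial holomorphic map $\Phi:\Ecal_\infty\to\Ecal$ is produced by taking limits of normalized identity sections of $\Hom(\Ecal,\Ecal)$, where nontriviality over the noncompact set $X\setminus Z$ rests on the ``curve property'' and a maximum principle on one-dimensional slices (Lemma \ref{Curve property}, Proposition \ref{nontrivial limit}); (2) in rank one, the slope identity $\mu(\Ecal_\infty)=\mu(\Ecal)$ (Proposition \ref{Slope equality}, a Chern--Weil statement over the singular base using compactly supported representatives of $p^*[\omega_1]\wedge\cdots\wedge p^*[\omega_{n-1}]$) plus Remmert--Stein forces $\Phi$ to be an isomorphism; (3) in general rank, the saturation of $\mathrm{Im}\,\Phi$ is extended by Siu's theorem, a Weitzenb\"ock argument on $\Hom(\wedge^{\mathrm{rk}\,\Gcal}\Ecal_\infty,\det\Gcal)$ (using the rank-one result for $\det\Gcal$ and the $L^\infty$ regularity of sections) contradicts $\mu(\Gcal)<\mu(\Ecal)$, and comparison of determinants makes $\Phi$ an isomorphism; only then does one conclude $\Sigma=\emptyset$ and, a posteriori, the local uniform equivalence of the $H_i$ that you assumed at the outset. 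To repair your proposal you would either have to prove the uniform functional/Sobolev estimates (open, as far as the paper indicates) or replace your degeneration step by an identification argument of this kind.
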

\begin{rmk}
In the K\"ahler variety case, i.e. $\omega=\omega_1=\cdots \omega_{n-1}$, there has been some recent progress on the singular Donaldson-Uhlenbeck-Yau theorem. In the projective case (\cite{ChenWentworth:21c}), if $\omega$ is the restriction of a Hodge metric from the ambient smooth variety, this has been proved by assuming the base is smooth in codimension two; a singular Donaldson-Uhlenbeck-Yau theorem has also been shown for stable sheaves over projective normal varieties which come from limits of stable sheaves over smooth projective varieties. Assuming a uniform Sobolev constant control for the resolution with perturbed K\"ahler metrics, the argument in \cite{ChenWentworth:21c} can be used to prove slightly more general results for normal projective varieties and subvarieties of K\"ahler manifolds smooth in codimension two. However, the control of the Sobolev constants for the perturbed K\"ahler metrics does not seem to follow from any known results unlike the smooth case in \cite{BandoSiu:94}. On the other hand, to use the heat flow to prove the singular Donaldson-Uhlenbeck-Yau theorem, the control of the Sobolev constants is very crucial. Assuming the existence of the uniform Sobolev constant control, the singular Donaldson-Uhlenbeck-Yau theorem could be obtained over normal K\"ahler varieties using the heat flow method (see \cite{Ou:22}). 
\end{rmk}
As a direct corollary, this gives
\begin{cor}\label{Tensor being polystable}
Suppose $\Ecal_1, \Ecal_2$  are stable reflexive sheaves over $(X, \omega_1 \wedge \cdots \omega_{n-1})$, then $(\Sym^{k}\Ecal_1)^{**}, (\wedge^k \Ecal_1)^{**}$ and $(\Ecal_1\otimes\Ecal_2)^{**}$ are all polystable. 
\end{cor}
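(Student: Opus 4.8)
The plan is to combine the existence part of Theorem~\ref{Singular DUY theorem} with the converse (``easy'') direction of the Hitchin--Kobayashi correspondence in the singular setting. Since $\Ecal_1,\Ecal_2$ are stable, Theorem~\ref{Singular DUY theorem} supplies admissible Hermitian--Yang--Mills metrics $h_1$ on $\Ecal_1$ and $h_2$ on $\Ecal_2$, with Einstein constants $\lambda_1,\lambda_2$ fixed by the slopes $\mu(\Ecal_1),\mu(\Ecal_2)$ against $\omega_1\wedge\cdots\wedge\omega_{n-1}$. Let $X^{\circ}\subseteq X$ be the open set where $X$ is smooth and $\Ecal_1,\Ecal_2$ are both locally free; its complement has codimension at least two, and over $X^{\circ}$ the sheaves $(\Sym^{k}\Ecal_1)^{**}$, $(\wedge^{k}\Ecal_1)^{**}$ and $(\Ecal_1\otimes\Ecal_2)^{**}$ coincide with the honest bundles $\Sym^{k}(\Ecal_1|_{X^{\circ}})$, $\wedge^{k}(\Ecal_1|_{X^{\circ}})$ and $\Ecal_1|_{X^{\circ}}\otimes\Ecal_2|_{X^{\circ}}$, on which I put the metrics induced by $h_1$ and $h_2$.

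The next step is to verify that these induced metrics are admissible Hermitian--Yang--Mills metrics on the corresponding reflexive sheaves. Their Chern curvature is the usual derivative--free tensorial expression in $F_{h_1}$ and $F_{h_2}$; by the Leibniz rule one gets $\sqrt{-1}\,\Lambda_{\omega}F=k\lambda_1\cdot\id$ in the symmetric and exterior cases and $\sqrt{-1}\,\Lambda_{\omega}F=(\lambda_1+\lambda_2)\cdot\id$ in the tensor case, which are exactly the Einstein constants attached to the slopes $k\mu(\Ecal_1)$ and $\mu(\Ecal_1)+\mu(\Ecal_2)$ of the respective sheaves. Moreover $|F|$ is pointwise bounded by a constant times $|F_{h_1}|+|F_{h_2}|$, so $\int_{X^{\circ}}|F|^{2}\,\omega^{n}<\infty$ follows from the corresponding bounds for $h_1,h_2$, and the local boundedness and Sobolev regularity in the definition of an admissible metric are inherited under $\otimes$, $\Sym^{k}$, $\wedge^{k}$ and under passage to direct summands. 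Hence each of the three sheaves carries an admissible Hermitian--Yang--Mills metric.

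It then remains to run the converse direction: a reflexive sheaf $\Gcal$ on a normal variety equipped with an admissible Hermitian--Yang--Mills metric is polystable. The mechanism is a singular Chern--Weil inequality, which gives $\mu(\Scal)\le\mu(\Gcal)$ for every saturated subsheaf $\Scal$, together with the fact that equality forces the second fundamental form to vanish, so that the splitting $\Gcal|_{X^{\circ}}=\Scal|_{X^{\circ}}\oplus\Qcal$ extends across the singular and non--locally--free loci to a splitting of reflexive sheaves; induction on the rank then produces the decomposition into stable summands of equal slope, and applying this to $\Gcal=(\Sym^{k}\Ecal_1)^{**},(\wedge^{k}\Ecal_1)^{**},(\Ecal_1\otimes\Ecal_2)^{**}$ gives the corollary. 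I expect this last step to be the real obstacle in the singular category: one must show that an admissible metric computes the slope of the sheaf \emph{and} of its subsheaves near the essential (codimension $\ge 2$) singularities, and upgrade the holomorphic splitting on $X^{\circ}$ to a splitting of reflexive sheaves on $X$ without a Bando--Siu type extension theorem being available off the shelf. For general normal varieties I would handle this by pulling everything back to a K\"ahler resolution $p:\hat X\to X$ with perturbed K\"ahler forms approximating $p^{*}\omega_1,\dots,p^{*}\omega_{n-1}$, applying the smooth theory on $\hat X$, and controlling both the possible loss of Yang--Mills energy along the exceptional divisor and the limiting behaviour of any destabilizing subsheaf.
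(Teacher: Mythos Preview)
Your steps~1 and~2 are correct and match the paper: one applies Theorem~\ref{Singular DUY theorem} to $\Ecal_1,\Ecal_2$, forms the induced metric on each tensor construction, and checks it is admissible Hermitian--Yang--Mills with the expected Einstein constant. The issue is step~3, and you have correctly located it: in the singular setting there is no a~priori Chern--Weil inequality $\mu(\Scal)\le\mu(\Gcal)$ for saturated subsheaves, because the admissible metric is not known to compute the slope of $\Gcal$, let alone of $\Scal$, across the essential singularities. Your fallback plan---pull back to a resolution, perturb, and control the limiting behaviour of destabilizing subsheaves---is essentially a second proof of the DUY theorem rather than a proof of the converse, and as written it is not a complete argument.

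The paper closes this gap by a different, more self-referential route (Corollary~\ref{Cor3.5}). First, the slope computation: the rank~$1$ case of Theorem~\ref{Singular DUY theorem} (Lemma~\ref{DUY for rank 1}) produces an admissible HYM metric on $\det\Gcal$ that \emph{does} compute $\mu(\Gcal)$, and uniqueness (Corollary~\ref{HE uniqueness}) identifies it with the determinant of the given metric; this settles the slope of $\Gcal$ without any direct Chern--Weil analysis near $X^{s}$. Second, instead of bounding $\mu(\Scal)$ via second fundamental forms, the paper picks a stable subsheaf $\Fcal_1\subset\Gcal$ with $\mu(\Fcal_1)\ge\mu(\Gcal)$, applies the \emph{full} DUY theorem again to put an admissible HYM metric on $\Fcal_1$, and then runs the Weitzenb\"ock argument on the inclusion $\iota\in H^{0}(\Hom(\Fcal_1,\Gcal))$: since $\log^{+}|\iota|^{2}\in W^{1,2}\cap L^{\infty}$ by the Sobolev/Moser machinery of Section~\ref{Regularity results}, integration by parts forces $\iota$ to be parallel and $\mu(\Fcal_1)=\mu(\Gcal)$, yielding the holomorphic splitting on $X^{reg}$ and hence, by reflexivity, on $X$. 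In short, the ``easy'' direction here is proved \emph{as a corollary of the hard direction}, by bootstrapping existence on $\det\Gcal$ and on $\Fcal_1$; your direct Chern--Weil approach would require independent control that the paper never establishes.
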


\subsection{Hitchin-Kobayashi correspondence over normal varieties smooth in codimension two} By working over normal varieties smooth in codimension two, the complete Hitchin-Kobayashi correspondence could be built by using the singular Donaldson-Uhlenbeck-Yau theorem together with Siu's sheaf extension result. For this, we have

\begin{cor}\label{Cor1.12}
An admissible Hermitian-Yang-Mills connection over a normal variety $(X, \omega_1\wedge \cdots \omega_{n-1})$ smooth in codimension two defines a polystable reflexive sheaf. 
\end{cor}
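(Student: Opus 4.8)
The plan is to first extract a reflexive sheaf from the holomorphic data carried by the connection, and then run a Chern--Weil argument adapted to the singular base. Let $A$ be the admissible Hermitian--Yang--Mills connection on a Hermitian bundle $(E,h)$ over $X_{\mathrm{reg}}$ (more generally, over the complement of an analytic subset of codimension $\ge 2$): so $\bar\partial_{A}$ gives $E$ a holomorphic structure with $F_{A}^{0,2}=0$, one has $\sqrt{-1}\,\Lambda_{\omega}F_{A}=\lambda\cdot\mathrm{id}$ for the balanced metric $\omega^{n-1}=\omega_{1}\wedge\cdots\wedge\omega_{n-1}$ and a constant $\lambda$, and the admissibility bounds $F_{A}\in L^{2}_{\mathrm{loc}}$, $|\Lambda_{\omega}F_{A}|\in L^{\infty}_{\mathrm{loc}}$ hold near $\mathrm{Sing}(X)$. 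Since $X$ is normal and smooth in codimension two, $\mathrm{Sing}(X)$ has codimension at least three; I would invoke Siu's sheaf extension result --- with the codimension-two-smoothness guaranteeing that $\mathrm{Sing}(X)$ is small enough and the admissibility of $h$ supplying the analytic control --- to extend $(E,\bar\partial_{A})$ across $\mathrm{Sing}(X)$ to a coherent reflexive sheaf $\mathcal{E}$ on $X$. The bundle metric $h$ then becomes an admissible Hermitian metric on $\mathcal{E}$, HYM on the locally free locus, and it remains to show $\mathcal{E}$ is polystable, i.e.\ (by definition) that $\hat{\mathcal{E}}:=(p^{*}\mathcal{E})^{**}$ is polystable on a resolution $p:\hat X\to X$ biholomorphic over $X_{\mathrm{reg}}$, with respect to $p^{*}\omega_{1}\wedge\cdots\wedge p^{*}\omega_{n-1}$.

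To prove semistability, I would test $\hat{\mathcal{E}}$ against an arbitrary saturated subsheaf $\hat{\mathcal{F}}\subset\hat{\mathcal{E}}$. Over $\hat X\setminus E_{\mathrm{exc}}\cong X_{\mathrm{reg}}$ this restricts to a coherent subsheaf of $(E,\bar\partial_{A})$, with orthogonal projection $\pi$ (an $L^{2}_{1}$ endomorphism, by the Uhlenbeck--Yau description of subsheaves) and second fundamental form $\beta$, which lies in $L^{2}$ since $\pi\in L^{2}_{1}$. The crucial step --- and the one I expect to be the main obstacle --- is to establish the Chern--Weil identity
\begin{equation*}
\deg(\hat{\mathcal{F}})=\mu(\hat{\mathcal{E}})\cdot\mathrm{rk}(\hat{\mathcal{F}})-c\,\|\beta\|_{L^{2}}^{2},\qquad c>0,
\end{equation*}
over the singular base: one must show that the admissible metric genuinely computes the slope of $\hat{\mathcal{F}}$, so that the HYM equation $\sqrt{-1}\,\Lambda_{\omega}F_{A}=\lambda\cdot\mathrm{id}$ makes the curvature term contribute exactly $\mu(\hat{\mathcal{E}})\cdot\mathrm{rk}(\hat{\mathcal{F}})$, leaving only the nonpositive Gauss correction $-c\|\beta\|_{L^{2}}^{2}$. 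Making this rigorous requires exhausting $X_{\mathrm{reg}}$ by domains and controlling the boundary terms on shrinking neighbourhoods of $\mathrm{Sing}(X)$ and of $E_{\mathrm{exc}}$ using the $L^{2}$ bound on $F_{A}$ together with a capacity estimate for these sets, and then matching the resulting analytic integral with the algebraic degree computed on $\hat X$ against the degenerate pulled-back polarization via the comparison results of Section~\ref{Chern classes}; the degree/slope-computing estimates developed in the course of proving Theorem~\ref{Singular DUY theorem} are what make this matching work, and the codimension-two-smoothness is precisely what renders the error contribution from $\mathrm{Sing}(X)$ negligible. Granting the identity, $\mu(\hat{\mathcal{F}})\le\mu(\hat{\mathcal{E}})$, so $\mathcal{E}$ is semistable.

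Finally I would upgrade semistability to polystability by induction on $\mathrm{rk}(\mathcal{E})$. If some saturated $\hat{\mathcal{F}}\subset\hat{\mathcal{E}}$ has $\mu(\hat{\mathcal{F}})=\mu(\hat{\mathcal{E}})$, the identity above forces $\beta=0$; hence over $X_{\mathrm{reg}}$ the projection $\pi$ is parallel and holomorphic, $E$ splits orthogonally and holomorphically as $\mathcal{F}|_{X_{\mathrm{reg}}}\oplus\mathcal{F}^{\perp}|_{X_{\mathrm{reg}}}$ compatibly with $A$, and taking reflexive hulls on $X$ gives $\mathcal{E}\cong\mathcal{F}^{**}\oplus(\mathcal{F}^{\perp})^{**}$ as reflexive sheaves, each summand carrying the restriction of $h$ as an admissible HYM metric of the same slope. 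Iterating, $\mathcal{E}$ decomposes as a finite direct sum of admissible HYM reflexive sheaves admitting no proper saturated subsheaf of equal slope; by the semistability just established, applied to each summand, these are stable, so $\mathcal{E}$ is polystable. Alternatively, once a stable equal-slope subsheaf has been isolated one may equip it with an admissible HYM metric via Theorem~\ref{Singular DUY theorem} and split it off using the uniqueness statement there; either route closes the induction.
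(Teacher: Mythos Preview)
Your extension step via Siu matches the paper. For polystability, however, your primary route---establishing the Chern--Weil identity $\deg(\hat\Fcal)=\mu(\hat\Ecal)\cdot\rank(\hat\Fcal)-c\|\beta\|_{L^2}^2$ for an arbitrary saturated subsheaf---is precisely what the paper avoids, and the introduction flags why: ``the easy direction of Hitchin-Kobayashi correspondence\ldots\ is highly nontrivial unlike the smooth case, due to lack of the Chern-Weil formula.'' The estimates you invoke from the proof of Theorem~\ref{Singular DUY theorem} (chiefly Proposition~\ref{Slope equality}) show that an admissible HYM metric \emph{arising as an Uhlenbeck limit from a resolution} computes the slope of the sheaf it defines; they do not show that the singular, non-HYM induced metric $h|_{\det\hat\Fcal}$ computes $\deg(\hat\Fcal)$ across $X^s$ and the non-locally-free locus of $\hat\Fcal$. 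That matching is the missing step in your main argument, and the paper supplies no tool to fill it directly.

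Instead the paper deduces polystability from Corollary~\ref{Cor3.5}, which uses Theorem~\ref{Singular DUY theorem} itself as the engine. One takes any stable subsheaf $\Fcal_1\subset\Fcal$ with $\mu(\Fcal_1)\ge\mu(\Fcal)$ (such exists inside the maximal destabilizer whenever $\Fcal$ is not already stable), applies the singular DUY theorem to produce a \emph{fresh} admissible HYM metric $H_1$ on $\Fcal_1$---whose Einstein constant is known to equal a multiple of $\mu(\Fcal_1)$ by the rank-one case plus uniqueness (Corollary~\ref{HE uniqueness})---and then runs the Weitzenb\"ock/integration-by-parts argument on the inclusion map, viewed as a bounded section of $\Hom(\Fcal_1,\Fcal)$ equipped with $H_1^*\otimes H$. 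This forces the inclusion to be parallel and $\mu(\Fcal_1)=\mu(\Fcal)$, whence $\Fcal\cong\Fcal_1\oplus(\Fcal/\Fcal_1)^{**}$, and one inducts. Your closing ``alternatively'' is essentially this argument, but note that the paper does not need semistability as a prior input: semistability and the splitting emerge simultaneously, so the subsheaf Chern--Weil identity is bypassed altogether. A further dividend is that Corollary~\ref{Cor3.5} holds over arbitrary normal varieties; the smooth-in-codimension-two hypothesis is consumed only by the Siu extension step.
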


\begin{proof}
By \cite{Siu:71}, we know an admissible Hermitian-Yang-Mills connection defines a unique coherent reflexive sheaf since $X$ is smooth in codimension two. Now the polystability follows from  Corollary \ref{Cor3.5} which is essentially a consequence of the singular Donaldson-Uhlenbeck-Yau theorem obtained above. 
\end{proof}

In particular, combined with the singular Donaldson-Uhlenbeck-Yau theorem above, we have 
\begin{thm}
Over a normal variety $X$ smooth in codimension two endowed with $n-1$ K\"ahler forms $\omega_1,\cdots \omega_{n-1}$, there exists a one-to-one correspondence between the space of isomorphism classes of stable reflexive sheaves and the space of gauge equivalent classes of admissible Hermitian-Yang-Mills connections over $(X, \omega_1\wedge \cdots \omega_{n-1})$.  
\end{thm}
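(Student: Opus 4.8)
The plan is to build the correspondence from the two directions already assembled above and then check that the two assignments are mutually inverse. Throughout, write $X_{\mathrm{reg}}$ for the smooth locus of $X$; the hypothesis that $X$ is smooth in codimension two means $X\setminus X_{\mathrm{reg}}$ has complex codimension at least three, and a reflexive sheaf on $X$ is locally free away from an analytic set of codimension at least three as well.

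First I would construct the map $\Phi$ from isomorphism classes of stable reflexive sheaves to gauge equivalence classes of admissible Hermitian--Yang--Mills connections. Given a stable reflexive $\mathcal{E}$, Theorem \ref{Singular DUY theorem} provides an admissible Hermitian--Yang--Mills metric $h$ on $\mathcal{E}$, unique up to a positive scalar. On the open set $U$ where $\mathcal{E}$ is locally free — whose complement has codimension at least three — the Chern connection $A_h$ of $(\mathcal{E}|_U,h)$ is integrable and satisfies $\Lambda_\omega F_{A_h}=c\,\mathrm{id}$ together with the $L^2$ curvature bound; these are exactly the conditions encoded in the admissibility of $h$, so $A_h$ is an admissible Hermitian--Yang--Mills connection. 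Rescaling $h$ changes $A_h$ only by a constant unitary gauge transformation, so $\Phi$ is well defined. In the opposite direction, $\Psi$ sends an admissible Hermitian--Yang--Mills connection $A$ to the sheaf $\mathcal{E}_A$ produced by Corollary \ref{Cor1.12}: by \cite{Siu:71} the connection extends to a coherent reflexive sheaf on $X$, polystable by the corollary, and carrying $A$ as its Chern connection on the domain of $A$ (here a stable sheaf corresponds to an irreducible connection, and a reducible connection is handled by decomposing $\mathcal{E}_A$ into its stable reflexive summands, each arising the same way).

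Next I would verify $\Psi\circ\Phi=\mathrm{id}$ and $\Phi\circ\Psi=\mathrm{id}$. For the first: starting from stable $\mathcal{E}$, Siu's extension of the holomorphic bundle underlying $A_h$ is a reflexive sheaf agreeing with $\mathcal{E}$ on $U$, whose complement has codimension at least three; two reflexive sheaves agreeing off a codimension-two set are canonically isomorphic, so $\mathcal{E}_{A_h}\cong\mathcal{E}$. For the second: the Chern connection of the admissible Hermitian--Yang--Mills metric underlying $\mathcal{E}_A$ coincides with $A$ on their common domain, and uniqueness up to scaling in Theorem \ref{Singular DUY theorem} (applied summand by summand) shows the resulting gauge class is that of $A$. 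Injectivity of $\Phi$ can also be seen directly: a unitary gauge transformation intertwining the two integrable Chern connections $\Phi(\mathcal{E}_1),\Phi(\mathcal{E}_2)$ is automatically holomorphic, giving an isomorphism of $\mathcal{E}_1$ and $\mathcal{E}_2$ over the locally free locus, which extends over all of $X$ because $\mathcal{H}om(\mathcal{E}_1,\mathcal{E}_2)$ is reflexive and $X$ is normal. Surjectivity of $\Phi$ is the content of $\Phi\circ\Psi=\mathrm{id}$.

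The genuinely hard analytic input — existence and uniqueness of the admissible Hermitian--Yang--Mills metric (Theorem \ref{Singular DUY theorem}) and the ``easy'' direction over varieties smooth in codimension two (Corollary \ref{Cor1.12}, resting on Siu's extension) — is already available, so the only remaining point requiring care is the bookkeeping that an admissible HYM \emph{metric} on a reflexive sheaf has Chern connection an admissible HYM \emph{connection}, and that Siu's reflexive extension of that connection returns the original sheaf. This is where the curvature and $L^2$ bounds built into the notion of admissibility, and the codimension-three hypothesis ensuring the relevant holomorphic data extend across the singular and non-locally-free loci, are used; once these compatibilities are recorded, the bijection is formal.
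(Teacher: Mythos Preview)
Your proof is correct and follows exactly the paper's approach: the theorem is stated there as an immediate consequence of Theorem~\ref{Singular DUY theorem} and Corollary~\ref{Cor1.12} with no further argument, and you simply spell out the mutual-inverse bookkeeping that the paper leaves implicit. The only point worth tightening is your parenthetical on reducible connections: the bijection is really between stable sheaves and \emph{irreducible} admissible Hermitian--Yang--Mills connections (a reducible connection yields a strictly polystable sheaf, which lies outside the domain of $\Phi$), and this is the natural reading of the paper's slightly loose statement.
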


\begin{rmk}\label{Remark 2.10}
\begin{itemize}
\item For the full Hitchin-Kobayashi correspondence over general normal varieties,  given the singular Donaldson-Uhlenbeck-Yau theorem above, the essential difficulty lies in proving an admissible Hermitian-Yang-Mills connection defines a coherent reflexive sheaf. More precisely, one needs to construct enough sections of the sheaf across the singular set. 

\item This statement is only on the set level. It seems to the author that an improved map such as an \emph{analytic} isomorphism as the smooth case is very subtle near singularities of the base. For example, near the singularities, it involves subtle analysis to make sense of a reasonably good moduli space of the analytic solutions. In the normal surface case, as pointed out by Donaldson, this could be related to \cite{Donaldson:02}. We leave this for future work. 

\end{itemize}
\end{rmk}

\subsection{Bogomolov-Gieseker inequality} Now we explain how to build a Bogomolov-Gieseker inequality in the general case. As already mentioned above, this is essentially a consequence of the basic phenomenon that there could be loss of Yang-Mills energy when we shrink the exceptional divisor in the proof of the Donaldson-Uhlenbeck-Yau theorem above.

By the Hodge-Riemann property for multipolarizations, the Hermitian-Yang-Mills metric $H$ obtained above gives an analytic Bogomolov-Gieseker inequality. Unless the variety is smooth in codimension two, it is in general expected that it does not compute any corresponding algebraic quantities. However, motivated by the gauge theoretical picture in the proof of the singular Donaldson-Uhlenbeck-Yau theorem, for the algebraic side, fix any $1\leq i_1< \cdots i_{n-2}\leq n-2$, we define the discriminant as 
$$
\begin{aligned}
&\Delta(\Ecal)[\omega_{i_1}] \cdots  [\omega_{i_{n-2}}]\\
=&\inf_{\text{ K\"ahler } p}\inf_{\hat{\Ecal}\in\Ebold_p } (2rc_2(\hat\Ecal)-(r-1)c_1(\hat\Ecal)^2).[p^*\omega_{i_1}] \cdots  [p^*\omega_{i_{n-2}}]
\end{aligned}
$$
which is intrinsically associated to $\Ecal$, here $p$ is among all the K\"ahler resolutions, i.e. $\hat{X}$ admits a K\"ahler metric and $\Ebold_p$ denotes the space of reflexive sheaves over $\hat{X}$ which are isomorphic to $\Ecal$ away from the exceptional divisor. It turns out the Hermitian-Yang-Mills metric does give a lower bound for the discriminant, thus a version of Bogomolov-Gieseker inequality over normal varieties without requiring the variety being smooth in codimension two. This recovers the classical Bogomolov-Gieseker inequality in the smooth case.

\begin{cor}\label{Bogomolov-Gieseker inequality}
Suppose $\Ecal$ is a stable reflexive sheaf over $(X, \omega_1 \wedge \cdots \omega_{n-1})$ and let $H$ be the admissible Hermitian-Yang-Mills metric as above. Then 
$$
\begin{aligned}
& \Delta(\Ecal)[\omega_{i_1}] \cdots  [\omega_{i_{n-2}}]\\
\geq &\int_{X} (2rc_2(H)-(r-1)c_1(H)^2)\wedge \omega_{i_1}\wedge \cdots \omega_{i_{n-2}} 
\end{aligned}
$$
for any $1\leq i_1<\cdots i_{n-2} \leq n-1$. In particular, 
$$
 \Delta(\Ecal)[\omega_{i_1}] \cdots  [\omega_{i_{n-2}}] \geq 0
$$
where $\Ecal$ is projectively flat if the equality holds, i.e. $\Ecal|_{X^{reg}}$ is defined by a representation $\rho: \pi_1(X^{reg}) \rightarrow \PU(m)$ where $m=\rank \Ecal$.  
\end{cor}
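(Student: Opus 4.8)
The plan is to separate the statement into a pointwise (``analytic'') Bogomolov--Gieseker inequality for $H$ and a comparison of $\int_X(2rc_2(H)-(r-1)c_1(H)^2)\wedge\omega_{i_1}\wedge\cdots\wedge\omega_{i_{n-2}}$ with the algebraic discriminant $\Delta(\Ecal)[\omega_{i_1}]\cdots[\omega_{i_{n-2}}]$. The first gives $\int_X(2rc_2(H)-(r-1)c_1(H)^2)\wedge\omega_{i_1}\wedge\cdots\wedge\omega_{i_{n-2}}\ge0$, with equality forcing projective flatness; the second gives $\Delta(\Ecal)[\omega_{i_1}]\cdots[\omega_{i_{n-2}}]\ge\int_X(2rc_2(H)-(r-1)c_1(H)^2)\wedge\omega_{i_1}\wedge\cdots\wedge\omega_{i_{n-2}}$. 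Combining them yields $\Delta(\Ecal)[\omega_{i_1}]\cdots[\omega_{i_{n-2}}]\ge\int_X(\cdots)\ge0$, and if the left-hand side vanishes then so does $\int_X(\cdots)$, whence projective flatness.

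The first part is the analytic Bogomolov--Gieseker inequality already available in the multipolarization setting. Since $H$ is admissible Hermitian--Yang--Mills for the balanced metric $\omega$ with $\omega^{n-1}=\omega_1\wedge\cdots\wedge\omega_{n-1}$, the trace-free part $F_H^0$ of the Chern curvature satisfies $\sqrt{-1}F_H^0\wedge\omega_1\wedge\cdots\wedge\omega_{n-1}=0$ on $X^{reg}$, so in a local unitary frame every entry of the Hermitian endomorphism-valued $(1,1)$-form $\sqrt{-1}F_H^0$ is primitive with respect to $\omega_1\wedge\cdots\wedge\omega_{n-1}$. The Chern--Weil representative of $2rc_2(H)-(r-1)c_1(H)^2$ is a positive multiple of $-\tr\big(\sqrt{-1}F_H^0\wedge\sqrt{-1}F_H^0\big)$, and wedging with $\omega_{i_1}\wedge\cdots\wedge\omega_{i_{n-2}}$ produces, up to a positive factor, $\sum_{a,b}\big(-(\sqrt{-1}F_H^0)_{ab}\wedge\overline{(\sqrt{-1}F_H^0)_{ab}}\wedge\omega_{i_1}\wedge\cdots\wedge\omega_{i_{n-2}}\big)$; the mixed Hodge--Riemann bilinear relations (Timorin; Section~\ref{DUY for mutipolarizations}) make $\beta\mapsto-\beta\wedge\bar\beta\wedge\omega_{i_1}\wedge\cdots\wedge\omega_{i_{n-2}}$ positive definite on primitive $(1,1)$-forms, so each summand, hence the total, is pointwise $\ge0$, with equality if and only if $F_H^0\equiv0$. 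Integrating gives the inequality; and $F_H^0\equiv0$ means the Chern connection of $(\Ecal|_{X^{reg}},H)$ is projectively flat, so its projectivization is a flat $\PU(m)$-connection whose holonomy defines the claimed $\rho\colon\pi_1(X^{reg})\to\PU(m)$.

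For the comparison, fix a K\"ahler resolution $p\colon\hat X\to X$, a K\"ahler form $\hat\omega$ on $\hat X$, and $\hat\Ecal\in\Ebold_p$; it suffices to prove $(2rc_2(\hat\Ecal)-(r-1)c_1(\hat\Ecal)^2)\cdot[p^*\omega_{i_1}]\cdots[p^*\omega_{i_{n-2}}]\ge\int_X(2rc_2(H)-(r-1)c_1(H)^2)\wedge\omega_{i_1}\wedge\cdots\wedge\omega_{i_{n-2}}$ and then take the infimum over $\hat\Ecal$ and over K\"ahler $p$. As $p(E)$ has dimension $\le n-2$, the degree attached to $p^*\omega_1\wedge\cdots\wedge p^*\omega_{n-1}$ vanishes on divisors supported on the exceptional divisor $E$, so $\hat\Ecal$ and $(p^*\Ecal)^{**}$ have equal slopes for all saturated subsheaves; hence, as in the proof of Theorem~\ref{Singular DUY theorem}, $\hat\Ecal$ is stable for the genuine K\"ahler multipolarization $(p^*\omega_1+t\hat\omega,\dots,p^*\omega_{n-1}+t\hat\omega)$ when $t>0$ is small, and running the construction there from the data $(p,\hat\Ecal)$ gives Hermitian--Yang--Mills metrics $\hat H_t$ converging, after scaling (which does not affect Chern--Weil forms), to $p^*H$ in $C^\infty_{\mathrm{loc}}(\hat X\setminus E)$. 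For each $t>0$ the smooth Chern--Weil identity reads $(2rc_2(\hat\Ecal)-(r-1)c_1(\hat\Ecal)^2)\cdot[p^*\omega_{i_1}+t\hat\omega]\cdots[p^*\omega_{i_{n-2}}+t\hat\omega]=\int_{\hat X}(2rc_2(\hat H_t)-(r-1)c_1(\hat H_t)^2)\wedge(p^*\omega_{i_1}+t\hat\omega)\wedge\cdots\wedge(p^*\omega_{i_{n-2}}+t\hat\omega)$, and the integrand is pointwise $\ge0$ by the smooth case of the first part applied to $\hat H_t$. Letting $t\to0$, the left side (a polynomial in $t$) converges to $(2rc_2(\hat\Ecal)-(r-1)c_1(\hat\Ecal)^2)\cdot[p^*\omega_{i_1}]\cdots[p^*\omega_{i_{n-2}}]$; on the right side, for any compact $K\subset\hat X\setminus E$ the integral over $K$ converges to $\int_K(2rc_2(p^*H)-(r-1)c_1(p^*H)^2)\wedge p^*\omega_{i_1}\wedge\cdots\wedge p^*\omega_{i_{n-2}}$, and, since the integrand is $\ge0$ on all of $\hat X$, exhausting $\hat X\setminus E$ by such $K$ gives $\liminf_{t\to0}\int_{\hat X}(\cdots)\ge\int_{\hat X\setminus E}(2rc_2(p^*H)-(r-1)c_1(p^*H)^2)\wedge p^*\omega_{i_1}\wedge\cdots\wedge p^*\omega_{i_{n-2}}=\int_X(2rc_2(H)-(r-1)c_1(H)^2)\wedge\omega_{i_1}\wedge\cdots\wedge\omega_{i_{n-2}}$, the last equality because $p$ is a biholomorphism off a set of measure zero. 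This is the required inequality.

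The main obstacle is the passage $t\to0$: one must know that $\hat H_t$ converges to $p^*H$ smoothly on compact subsets of $\hat X\setminus E$ --- equivalently, that Yang--Mills energy can escape only along $E$ and that the limiting connection away from $E$ is exactly the pullback of $H$. This is precisely the convergence analysis in the proof of Theorem~\ref{Singular DUY theorem}, which I would invoke; the role of the first part is to provide the pointwise nonnegativity of the Chern--Weil density, which is what converts the possible loss of energy along $E$ into a one-sided inequality rather than an equality. A subsidiary point requiring care is that the stability needed to produce $\hat H_t$ must hold for an arbitrary $\hat\Ecal\in\Ebold_p$, not only for $(p^*\Ecal)^{**}$; as noted, this reduces to the vanishing of the $p^*\omega_1\wedge\cdots\wedge p^*\omega_{n-1}$-degree on $E$-supported divisors together with the openness of stability under the perturbation replacing $\omega_j$ by $p^*\omega_j+t\hat\omega$.
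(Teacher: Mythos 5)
Your overall strategy coincides with the paper's: produce Hermitian--Yang--Mills metrics $\hat H_t$ for the perturbed K\"ahler multipolarization on a K\"ahler resolution, use the pointwise Hodge--Riemann nonnegativity of the Chern--Weil density, pass to the limit $t\to 0$ one-sidedly (your compact exhaustion is just Fatou's lemma), and identify the limit connection with the one given by $H$; your first, purely analytic half (pointwise nonnegativity for $H$ and the resulting equality analysis) is also the intended argument. The genuine gap is in the comparison step for an \emph{arbitrary} $\hat{\Ecal}\in \Ebold_p$: you invoke ``the smooth Chern--Weil identity'' for $\hat H_t$. But $\hat{\Ecal}$ is only reflexive, and it genuinely can fail to be locally free --- both because $\Ebold_p$ consists of arbitrary reflexive extensions and because $\Ecal$ itself need not be locally free on $X^{reg}$, so $\hat{\Ecal}$ may be singular even away from the exceptional divisor. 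For such $\hat{\Ecal}$ the construction you propose to run only yields an \emph{admissible} HYM metric, smooth off $\mathrm{Sing}(\hat{\Ecal})$ with $L^2$ curvature, and the identity
$\Delta(\hat{\Ecal})\cdot[p^*\omega_{i_1}+t\hat\omega]\cdots[p^*\omega_{i_{n-2}}+t\hat\omega]
=\int_{\hat X}\bigl(2rc_2(\hat H_t)-(r-1)c_1(\hat H_t)^2\bigr)\wedge(p^*\omega_{i_1}+t\hat\omega)\wedge\cdots\wedge(p^*\omega_{i_{n-2}}+t\hat\omega)$
is then not smooth Chern--Weil theory: it requires the nontrivial fact that the Chern--Weil currents of an admissible HYM connection are closed and compute $c_1^2$ and $c_2$ of the reflexive sheaf in these pairings. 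This is exactly the input the paper supplies in its ``general case'' by citing \cite[Proposition 46]{ChenWentworth:21a} (cf.\ \cite{SibleyWentworth:15}). Since $\Delta(\Ecal)$ is an infimum over \emph{all} $\hat{\Ecal}\in\Ebold_p$ and all K\"ahler resolutions, you cannot restrict to locally free extensions; and reducing to them by a further resolution would need a separate comparison of discriminants that you do not provide. As written, the step fails for singular $\hat{\Ecal}$.

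Apart from this, your argument is essentially the paper's: when $\hat{\Ecal}$ is locally free your computation is the paper's ``locally free resolution'' case; stability of $\hat{\Ecal}$ for small $t$ is Proposition \ref{Openess of stability} (whose proof only uses agreement with $\Ecal$ off the exceptional set, so it does apply to any element of $\Ebold_p$); the convergence $\hat H_t\to H$ away from the exceptional set is the convergence analysis of Theorem \ref{Singular DUY theorem} combined with uniqueness (Corollary \ref{HE uniqueness}); and your direct pointwise treatment of the equality case is what the paper refers out for. Two small further points: the exhausting compact sets should avoid $p^{-1}(\mathrm{Sing}\,\Ecal)$ as well as $E$ (harmless, since that set has measure zero and $H$ is admissible), and in the equality case the flat $\PU(m)$-structure is first obtained on $X^{reg}\setminus \mathrm{Sing}(\Ecal)$, from which one passes to $X^{reg}$ using that $\mathrm{Sing}(\Ecal)\cap X^{reg}$ has complex codimension at least three.
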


\begin{rmk} 
\begin{itemize}
\item From our perspective, the most important aspect of this theorem lies in that if the stable reflexive sheaf can be resolved in a nice way, then one can conclude good properties about the original sheaf. Such methods are very often used when one studies stable reflexive sheaves in birational geometry or decomposition of singular spaces. To name a few, for example, \cite{CGG:21} \cite{CGGN:22} \cite{CP:19} \cite{GKP:16a} \cite{GKP:16b}. See also Corollary \ref{Theorem 1.18} for such a nontrivial application. 

\item We emphasize that we would not expect the equality in the first inequality to hold in general due to the natural gauge theoretical picture. The significance shows up when we study the moduli space of admissible Hermitian-Yang-Mills connections over normal varieties, this will be an important ingredient since it gives us natural bound on the $L^2$ norm of the curvature. Especially in the normal surface case, this could be related to Donaldson theory (see Section \ref{Over normal surfaces}).  

\item When the base is smooth in codimension two, the Bogomolov-Gieseker inequality is known in the case of projective normal varieties, or more generally K\"ahler normal varieties, and the equality has been characterized with various characteristic classes vanishing conditions (\cite{Wu:21, CGG:21, HoringPeternell:19, GKP:16b}). The novelty of our theorem is that it builds the general version over normal varieties without being smooth in codimension three assumption and gives a characterization of the equality in complete generality.

\item We also refer the readers to the later recent preprints \cite{Ou:24, GP:24} for related results.
\end{itemize}
\end{rmk}

\subsection{Simplifications of Bogomolov-Gieseker inequalities}
Now we discuss various typical and important cases where for the Bogomolov-Gieseker inequality above, we could simplify or give the formula topological meanings. 

\subsubsection{Over normal surfaces}\label{Over normal surfaces}
We first look at the Bogomolov-Gieseker inequality in the normal surface case which could have potential use for understanding Donaldson theory over normal surfaces.

First, when $\Ecal$ is locally free over a normal K\"ahler surface $(X, \omega)$, the discriminant can be computed by the natural pull-back for any resolutions. In this case, since $\Ecal$ is locally free, it defines a bundle over $X$, thus there already exist topological quantities corresponding to the ones defined here. 
 
\begin{cor}[When $\Ecal$ is locally free over a normal surface]\label{Cor1.6}
Suppose $\Ecal$ is a stable locally free sheaf over a normal K\"ahler surface $(X, \omega)$ and let $H$ be the admissible Hermitian-Yang-Mills metric as above. Then 
$$
\begin{aligned}
\Delta(\Ecal)
=& (2rc_2(\Ecal)-(r-1)c_1(\Ecal)^2)\cap X\\
\geq & \int_{X} (2rc_2(H)-(r-1)c_1(H)^2)\\
\geq& 0
\end{aligned}
$$
where $\Ecal$ is projectively flat if the last equality holds, i.e. $\Ecal|_{X^{reg}}$ is defined by a representation $\rho: \pi_1(X^{reg}) \rightarrow \PU(m)$ where $m=\rank \Ecal$.
\end{cor}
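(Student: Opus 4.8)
The plan is to deduce this statement directly from the general Bogomolov-Gieseker inequality in Corollary \ref{Bogomolov-Gieseker inequality}, specialized to $n=2$, by showing that in the locally free case all the infima defining $\Delta(\Ecal)$ are attained by the trivial pull-back. The second and third inequalities are immediate: the inequality $\Delta(\Ecal) \geq \int_X (2rc_2(H)-(r-1)c_1(H)^2)$ is exactly Corollary \ref{Bogomolov-Gieseker inequality} with $n=2$ (so there are no $\omega_{i_j}$ factors), and $\Delta(\Ecal) \geq 0$ together with the projective flatness characterization of equality is the ``in particular'' part of that corollary verbatim. So the only genuinely new content is the first line, the identification $\Delta(\Ecal) = (2rc_2(\Ecal)-(r-1)c_1(\Ecal)^2)\cap X$.

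The key step is therefore to compute $\Delta(\Ecal)$ when $\Ecal$ is locally free. First I would observe that for any resolution $p\colon \hat X \to X$, since $\Ecal$ is locally free it is already reflexive and locally free on all of $X$, so $p^*\Ecal$ is locally free on $\hat X$, hence automatically reflexive; thus $\hat\Ecal := (p^*\Ecal)^{**} = p^*\Ecal \in \Ebold_p$. For this particular element the discriminant on $\hat X$ is $(2rc_2(p^*\Ecal)-(r-1)c_1(p^*\Ecal)^2)\cap \hat X = p^*(2rc_2(\Ecal)-(r-1)c_1(\Ecal)^2)\cap \hat X$, and by the projection formula this equals $(2rc_2(\Ecal)-(r-1)c_1(\Ecal)^2)\cap X$, independent of $p$. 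This gives $\Delta(\Ecal) \leq (2rc_2(\Ecal)-(r-1)c_1(\Ecal)^2)\cap X$. For the reverse inequality I would argue that every $\hat\Ecal \in \Ebold_p$ agrees with $p^*\Ecal$ away from the exceptional locus $E$, which in the surface case is a curve, so $\hat\Ecal$ and $p^*\Ecal$ differ by a modification supported on $E$; one then shows the characteristic number $(2rc_2-(r-1)c_1^2)\cap \hat X$ can only increase under such modifications (equivalently, that $p^*\Ecal$ minimizes this quantity within $\Ebold_p$). This last point uses that $c_1(\hat\Ecal) - c_1(p^*\Ecal)$ is supported on $E$ and the intersection-theoretic sign coming from negativity of the intersection form on exceptional curves, together with the fact that $c_2$ can only jump up when one takes reflexive hulls / modifies along $E$.

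The main obstacle I expect is precisely this last monotonicity claim — controlling how $2rc_2(\hat\Ecal)-(r-1)c_1(\hat\Ecal)^2$ varies over the whole family $\Ebold_p$ of reflexive sheaves on $\hat X$ isomorphic to $p^*\Ecal$ off $E$, and showing $p^*\Ecal$ realizes the minimum. One clean route is to invoke, as in the proof of the general Corollary \ref{Bogomolov-Gieseker inequality}, that the admissible Hermitian-Yang-Mills metric $H$ on $\Ecal$ pulls back to $\hat X$ and that the comparison of Yang-Mills energies on $\hat X$ versus the loss along $E$ already yields $\int_X(2rc_2(H)-(r-1)c_1(H)^2) \leq (2rc_2(\hat\Ecal)-(r-1)c_1(\hat\Ecal)^2)\cap\hat X$ for every $\hat\Ecal\in\Ebold_p$; combined with the upper bound from $p^*\Ecal$ above and the fact that in the locally free case $H$ extends smoothly and computes $c_i(\Ecal)$ honestly (no codimension-two issues since the bundle is genuinely defined on $X$), one gets equality $\Delta(\Ecal)=(2rc_2(\Ecal)-(r-1)c_1(\Ecal)^2)\cap X$ and simultaneously that the middle inequality's integrand is the topological one. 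I would also need to note that $c_1(H)^2$ and $c_2(H)$, a priori only $L^1$ forms on $X^{reg}$, integrate to the topological intersection numbers here because $\Ecal$ is locally free and $H$ is a genuine smooth metric across $X$; this is where local freeness is essential and where the argument genuinely simplifies relative to the reflexive case.
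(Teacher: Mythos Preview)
Your first approach is correct and is exactly what the paper does: the equality $\Delta(\Ecal)=(2rc_2(\Ecal)-(r-1)c_1(\Ecal)^2)\cap X$ is the content of Proposition~\ref{Pull-back minimizes}, whose proof uses the splitting principle to write $c(\hat\Ecal)=\prod_i(1+p^*x_i+e_i)$ with each $e_i$ supported on the exceptional divisor, observes $e_t\cdot p^*x_s=0$ for dimension reasons, and reduces the comparison to $\sum_{i<j}(e_i-e_j)^2\leq 0$ via Du Val's lemma on the negative definiteness of the intersection matrix of the exceptional curves. Your sketch ``negativity of the intersection form on exceptional curves, together with $c_2$ jumping up'' is pointing at the right mechanism, though the paper's computation is sharper than a separate $c_1$/$c_2$ analysis.

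Your ``clean route'', however, has a genuine gap. You assert that because $\Ecal$ is locally free, the admissible Hermitian--Yang--Mills metric $H$ ``extends smoothly and computes $c_i(\Ecal)$ honestly'' and is ``a genuine smooth metric across $X$''. This is not true and is in fact the whole difficulty: local freeness of $\Ecal$ removes the singularities of the \emph{sheaf}, but $X$ itself is still singular (isolated points, on a normal surface), and $H$ is only defined and smooth on $X^{reg}$. There is no reason for the Chern--Weil integral $\int_{X}(2rc_2(H)-(r-1)c_1(H)^2)$ to equal the topological number $(2rc_2(\Ecal)-(r-1)c_1(\Ecal)^2)\cap X$; energy can concentrate at $X^s$ as one shrinks the exceptional divisor, and the paper explicitly leaves this as an inequality (see the remark that one does \emph{not} expect equality in the first inequality of Corollary~\ref{Bogomolov-Gieseker inequality} in general). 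So the sandwich you propose collapses only if the Chern--Weil formula holds at the singular points, which you have not established and which the paper does not claim in this generality. Stick with the Du Val argument.
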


\begin{rmk}
\begin{itemize}
\item An interesting aspect about the first equality lies in that for any resolution $p$, $p^*\Ecal$ will compute the smallest one among the extensions $\Ebold_{p}$ which is related to a lemma of Du Val (\cite[Proposition 2.1.12]{Nemethi:22}). It states the natural intersection matrix given by intersecting irreducible components of the exceptional divisor is negative definite. 

\item Over singular varieties, existence of locally free sheaves is in general a very nontrivial question, but in the normal surface case, abundant locally free sheaves have been constructed in \cite{SchroerVezzosi04}. Combined with the results above, this could be a starting point to study Donaldson theory over normal surfaces, and we leave this for future work. 

\end{itemize}
\end{rmk}

The second simplification is that the quantity can be computed by using only the minimal resolution for a normal surface 
 
\begin{cor}[On the minimal resolution of a normal surface]\label{Minimal resolution}
Suppose $\Ecal$ is a stable reflexive sheaf over a normal K\"ahler surface $(X, \omega)$ and let $H$ be the admissible Hermitian-Yang-Mills metric as above. Then 
$$
\begin{aligned}
\Delta(\Ecal)=& \inf_{\hat{\Ecal}\in\Ebold_{p^{min}} } (2rc_2(\hat\Ecal)-(r-1)c_1(\hat\Ecal)^2) \\
 \geq &\int_{X} (2rc_2(H)-(r-1)c_1(H)^2)
\\ 
\geq& 0
\end{aligned}
$$
for any $1\leq i_1<\cdots i_{n-2} \leq n-1$, where $\Ecal$ is projectively flat if the last equality hold, i.e. $\Ecal|_{X^{reg}}$ is defined by a representation $\rho: \pi_1(X^{reg}) \rightarrow \PU(m)$ where $m=\rank \Ecal$.
\end{cor}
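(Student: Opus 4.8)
The plan is to reduce the statement to Corollary~\ref{Bogomolov-Gieseker inequality} together with the single new fact that the infimum defining $\Delta(\Ecal)$ is already attained on the minimal resolution, i.e.\ that
\[
\Delta(\Ecal)=\inf_{\hat\Ecal\in\Ebold_{p^{min}}}\bigl(2rc_2(\hat\Ecal)-(r-1)c_1(\hat\Ecal)^2\bigr).
\]
Once this equality is available, the two displayed inequalities and the characterization of the last equality are precisely Corollary~\ref{Bogomolov-Gieseker inequality} in the case $n=2$ --- where there are no factors $[\omega_{i_j}]$ and the index condition quoted in the statement is vacuous --- applied to the admissible Hermitian--Yang--Mills metric $H$. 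So I would only have to prove the displayed equality.

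One of the two inequalities is immediate: for a normal K\"ahler surface the minimal resolution $p^{min}$ is again K\"ahler (choose any K\"ahler resolution and contract the $(-1)$--curves lying over the singular points, using that contracting a $(-1)$--curve preserves the K\"ahler property for compact surfaces), so $\Ebold_{p^{min}}$ occurs among the families over which $\Delta(\Ecal)$ is defined. For the reverse inequality it suffices to produce, for every K\"ahler resolution $p\colon\hat X\to X$ and every $\hat\Ecal\in\Ebold_p$, some $\Ecal'\in\Ebold_{p^{min}}$ with
\[
2rc_2(\Ecal')-(r-1)c_1(\Ecal')^2\ \le\ 2rc_2(\hat\Ecal)-(r-1)c_1(\hat\Ecal)^2,
\]
since taking the infimum over all such pairs $(p,\hat\Ecal)$ then yields $\inf_{\Ebold_{p^{min}}}\le\Delta(\Ecal)$. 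For this I would use the universal property of the minimal resolution to factor $p$ as $\hat X\xrightarrow{\;\pi\;}\hat X^{min}\xrightarrow{\;p^{min}\;}X$, with $\pi$ a composition of blow--ups at points $\hat X=\hat X_k\to\cdots\to\hat X_0=\hat X^{min}$, and set $\hat\Ecal_j:=((g_j)_*\hat\Ecal)^{**}$ for $g_j\colon\hat X\to\hat X_j$ the partial composition. Each $\hat\Ecal_j$ is reflexive, hence locally free on the smooth surface $\hat X_j$; one checks $\hat\Ecal_{j-1}=(\pi_{j*}\hat\Ecal_j)^{**}$, and $\Ecal':=\hat\Ecal_0=(\pi_*\hat\Ecal)^{**}$ is isomorphic to $\Ecal$ off $\mathrm{Exc}(p^{min})$ --- the finitely many extra points of $X^{reg}$ that $\pi$ may blow up cause no harm, by Hartogs extension of locally free sheaves across points of the smooth surface $X^{reg}$ --- so $\Ecal'\in\Ebold_{p^{min}}$. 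After this reduction it remains to prove, for a single point blow--up $\pi\colon\hat X\to Y$ of smooth surfaces with exceptional $(-1)$--curve $E$ and any locally free $\hat\Ecal$ on $\hat X$, that
\[
2rc_2(\hat\Ecal)-(r-1)c_1(\hat\Ecal)^2\ \ge\ 2rc_2(\Fcal)-(r-1)c_1(\Fcal)^2,\qquad \Fcal:=(\pi_*\hat\Ecal)^{**},
\]
and then to iterate over $j$.

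This single blow--up inequality is the step I expect to be the main obstacle, and it is where the negativity of $E$ must be used --- morally the same mechanism behind the locally free case of Corollary~\ref{Cor1.6} and Du Val's negative definiteness. Since $\hat\Ecal$ and $\pi^*\Fcal$ are locally free and agree away from $E$, I would write $c_1(\hat\Ecal)=\pi^*c_1(\Fcal)+mE$ with $m\in\ZBbb$, so that $c_1(\hat\Ecal)^2=c_1(\Fcal)^2-m^2$ and the $c_1$--contribution to the discriminant already moves in the favorable direction, and then link the two bundles by a finite chain of elementary modifications along $E$. Tracking $2rc_2-(r-1)c_1^2$ through each modification $0\to\hat\Ecal'\to\hat\Ecal''\to\iota_*G\to 0$, with $G$ a sheaf on $E\cong\PBbb^1$, and using $E^2=-1$, $N_{E/\hat X}=\Ocal_E(-1)$ and Riemann--Roch on $E$, the change in $2rc_2-(r-1)c_1^2$ works out to a nonnegative combination of terms controlled by $h^0(\PBbb^1,\Ocal(d))$, where the admissible $d$ are constrained precisely because $\pi^*\Fcal|_E$ is trivial; hence the discriminant never drops below its value at $\pi^*\Fcal$, which is $2rc_2(\Fcal)-(r-1)c_1(\Fcal)^2$. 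The point requiring the most care is the case in which the intermediate cokernels are not pure one--dimensional, i.e.\ they carry zero--dimensional pieces, where one must check that those pieces only raise the discriminant. Assembling the three ingredients --- the easy inequality, the reduction to a single blow--up, and the single blow--up inequality --- gives the displayed equality, after which the corollary follows from Corollary~\ref{Bogomolov-Gieseker inequality}.
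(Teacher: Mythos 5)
Your overall reduction is the same as the paper's: the corollary is Corollary \ref{Bogomolov-Gieseker inequality} (in the surface case) combined with the identity $\Delta(\Ecal)=\inf_{\hat\Ecal\in\Ebold_{p^{min}}}\Delta(\hat\Ecal)$, and your route to that identity — the minimal resolution is K\"ahler, factor any K\"ahler resolution as $p=p^{min}\circ\pi$, push forward and take double duals to land in $\Ebold_{p^{min}}$, then show the discriminant does not increase under this descent — is exactly the paper's Proposition \ref{Prop2.12}. Where you diverge is in how the key inequality $\Delta(\hat\Ecal)\ \ge\ \Delta\bigl((\pi_*\hat\Ecal)^{**}\bigr)$ is established. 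The paper does it in one stroke for the whole morphism $\pi$ via Proposition \ref{Pull-back minimizes}: since $\hat\Ecal$ and $\pi^*(\pi_*\hat\Ecal)^{**}$ agree off the exceptional curves, the difference of discriminants is a sum of squares of exceptional classes, which is $\le 0$ by Du Val's negative definiteness — the very mechanism you name but then do not use. You instead decompose $\pi$ into single point blow-ups and propose an elementary-modification bookkeeping along each exceptional $\PBbb^1$.

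That substituted step is where your argument has a genuine gap. The assertion that ``the change in $2rc_2-(r-1)c_1^2$ through each modification works out to a nonnegative combination'' is not true for an arbitrary chain of elementary modifications along $E$: for $0\to\Ecal'\to\Ecal''\to\iota_*\Ocal_E(d)\to 0$ a Grothendieck--Riemann--Roch computation gives
\begin{equation*}
\Delta(\Ecal')-\Delta(\Ecal'')\;=\;-2\,c_1(\Ecal'')\!\cdot\!E\;+\;(r-1)\;+\;2rd,
\end{equation*}
which is strictly negative, e.g., when $c_1(\Ecal'')\cdot E=0$, $d=-1$ and $\Ecal''|_E\cong\Ocal(1)\oplus\Ocal(-1)\oplus\Ocal^{\oplus(r-2)}$, so such discriminant-decreasing steps really occur. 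Hence monotonicity must come from a carefully chosen canonical chain (or from an endpoint comparison using the splitting type of $\hat\Ecal|_E$ and the lengths of the relevant $R^1\pi_*$/torsion terms), and your sketch supplies neither: the single blow-up inequality — which you yourself flag as the main obstacle — is asserted rather than proved. The statement is true, but the shortest repair is to prove (or cite) the paper's Proposition \ref{Pull-back minimizes}: writing the total Chern class difference in terms of classes supported on the exceptional set and using negative definiteness of the exceptional intersection matrix gives $\Delta(\pi^*\Fcal)-\Delta(\hat\Ecal)=\sum_{i<j}(e_i-e_j)^2\le 0$ directly for the composite $\pi$, with no induction on blow-ups and no case analysis of cokernels.
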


\begin{rmk}
It remains as an interesting question in general to characterize when the Chern-Weil formula holds. We expect the right picture for this has connections with \cite{Langer:00}. We leave this for future work. 
\end{rmk}

\subsubsection{Smooth in codimension two}
When $X$ is a normal variety with multiple K\"ahler metrics smooth in codimension two, the quantity can be computed by using any resolutions. Also a Chern-Weil formula holds when the base has isolated singularities.

\begin{cor}[When $X$ is smooth in codimension two]\label{Cor1.10}
Suppose $\Ecal$ is a stable reflexive sheaf over $(X, \omega_1 \wedge \cdots \omega_{n-1})$ smooth in codimension two and let $H$ be the admissible Hermitian-Yang-Mills metric as above. 
\begin{enumerate}
\item The following holds
$$
\begin{aligned}
&\Delta(\Ecal)[\omega_{i_1}] \cdots  [\omega_{i_{n-2}}]\\
=&(2rc_2(\hat\Ecal)-(r-1)c_1(\hat\Ecal)^2).[p^*\omega_{i_1}] \cdots  [p^*\omega_{i_{n-2}}]\\
\geq & \int_{X} (2rc_2(H)-(r-1)c_1(H)^2)\wedge \omega_{i_1}\wedge \cdots \omega_{i_{n-2}}
\end{aligned}
$$
for any resolution $p$ and $\hat{\Ecal}\in \Ebold_p$ and for any $1\leq i_1<\cdots i_{n-2} \leq n-1$, where $\Ecal|_{X^{reg}}$ is projectively flat if the last equality holds.. In particular, when $[\omega_1], \cdots [\omega_{n-1}]$ can be represented by very ample divisors, then  
$$
\begin{aligned}
&\Delta(\Ecal)[\omega_{i_{n-1}}] \cdots  [\omega_{i_{n-2}}]\\
=&\Delta(\Ecal|_{D_{i_1}\cap \cdots D_{i_{n-2}}}) \\
\geq &\int_{X} (2rc_2(H)-(r-1)c_1(H)^2)\wedge \omega_{i_1}\wedge \cdots \omega_{i_{n-2}}\\
\geq &0
\end{aligned}
$$
where $[D_{i_j}]=[\omega_{i_j}]$ so that $D_{i_1}\cap \cdots D_{i_{n-2}}$ is a smooth surface in $X$ and $\Ecal|_{X^{reg}}$ is projectively flat if the last equality holds, i.e. $\Ecal|_{X^{reg}}$ is defined by a representation $\rho: \pi_1(X^{reg}) \rightarrow \PU(m)$ where $m=\rank \Ecal$.

\item  Assume further $X$ has isolated singularities, then 
$$
\begin{aligned}
&\Delta(\Ecal)[\omega_{i_{n-1}}] \cdots  [\omega_{i_{n-2}}]\\
= &\int_{X} (2rc_2(H)-(r-1)c_1(H)^2)\wedge \omega_{i_1}\wedge \cdots \omega_{i_{n-2}}\\
\geq & 0
\end{aligned}
$$
where $\Ecal|_{X^{reg}}$ is projectively flat if the last equality holds, i.e. $\Ecal|_{X^{reg}}$ is defined by a representation $\rho: \pi_1(X^{reg}) \rightarrow \PU(m)$ where $m=\rank \Ecal$.
\end{enumerate}
\end{cor}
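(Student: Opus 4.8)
The plan is to separate the statement into the part that is merely quoted and the part that requires argument. The inequality $\Delta(\Ecal)[\omega_{i_1}]\cdots[\omega_{i_{n-2}}]\ge\int_X(2rc_2(H)-(r-1)c_1(H)^2)\wedge\omega_{i_1}\wedge\cdots\wedge\omega_{i_{n-2}}$, the resulting nonnegativity, and the characterization of equality by projective flatness are immediate from Corollary \ref{Bogomolov-Gieseker inequality}; so the genuinely new assertions are (i) that the first equality in (1) holds for \emph{every} K\"ahler resolution $p$ and \emph{every} $\hat\Ecal\in\Ebold_p$, (ii) the reduction to a surface when the classes are very ample, and (iii) the Chern--Weil equality in (2). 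All three rest on one observation: since $X$ is smooth in codimension two, every resolution $p\colon\hat X\to X$ is an isomorphism over $X^{reg}$ and $p(\mathrm{Exc}(p))=X^{sing}$ has complex codimension $\ge 3$; consequently a codimension-two class on $\hat X$ supported on $\mathrm{Exc}(p)$ is carried by $p_*$ into $H_{2n-4}$ of $X^{sing}$, which vanishes for dimension reasons, and hence pairs trivially with $p^*\omega_{i_1}\wedge\cdots\wedge p^*\omega_{i_{n-2}}$ by the projection formula.

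For (i), first fix $p$ and vary $\hat\Ecal$ in $\Ebold_p$: two such sheaves agree off $\mathrm{Exc}(p)$, so their $c_1$'s and $c_2$'s, hence (expanding $2rc_2-(r-1)c_1^2$) their discriminant classes, differ by a degree-four class supported on $\mathrm{Exc}(p)$, which by the observation pairs trivially with $[p^*\omega_{i_1}]\cdots[p^*\omega_{i_{n-2}}]$. To remove the dependence on $p$, dominate two K\"ahler resolutions by a common K\"ahler resolution $p_3\colon\hat X_3\to X$ (which exists by resolution of indeterminacy) with $q_a\colon\hat X_3\to\hat X_a$, $p_a\circ q_a=p_3$; each $q_a$ is again an isomorphism over $X^{reg}$, so $q_a(\mathrm{Exc}(q_a))\subseteq p_a^{-1}(X^{sing})$. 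The sheaf $(q_a^*\hat\Ecal_a)^{**}$ lies in $\Ebold_{p_3}$, its discriminant class equals $q_a^*$ of that of $\hat\Ecal_a$ up to a correction supported on $\mathrm{Exc}(q_a)$, and applying the projection formula first for $q_a$ and then for $p_a$, together with the codimension count, shows that the discriminant numbers of $(\hat X_a,\hat\Ecal_a)$ and $(\hat X_3,(q_a^*\hat\Ecal_a)^{**})$ coincide. Combining with the first step applied on $\hat X_3$, the number is independent of all choices, hence equals the infimum $\Delta(\Ecal)[\omega_{i_1}]\cdots[\omega_{i_{n-2}}]$.

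For (ii), with the $[\omega_{i_j}]$ very ample I would pick generic $D_{i_j}$ in their linear systems; by Bertini $S=D_{i_1}\cap\cdots\cap D_{i_{n-2}}$ is a smooth irreducible surface, and since $\dim S+\dim X^{sing}<\dim X$ it may be taken inside $X^{reg}$, so $p^{-1}(S)\cong S$, $\hat\Ecal|_{p^{-1}(S)}\cong\Ecal|_S$, and $\Ecal|_S$ is locally free because $S$ is a smooth surface. Using that the discriminant number on $\hat X$ depends, by (i), only on $\Ecal|_{X^{reg}}$ and the classes $[\omega_{i_j}]$, the standard ``cut down by a surface'' computation---legitimate because $S\subset X^{reg}$, where $\Ecal$ is a bundle---identifies it with $(2rc_2(\Ecal|_S)-(r-1)c_1(\Ecal|_S)^2)\cap S=\Delta(\Ecal|_S)$, and the remaining inequalities in the displayed chain are again Corollary \ref{Bogomolov-Gieseker inequality}. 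For (iii), assume $X$ has isolated singularities, so $n\ge 3$ for the statement to be non-vacuous. Taking $\hat\Ecal=(p^*\Ecal)^{**}$, which is stable on the smooth compact K\"ahler manifold $\hat X$ and therefore carries a Hermitian--Yang--Mills metric $\hat H$ for which Chern--Weil holds (Bando--Siu), part (i) gives $\Delta(\Ecal)[\omega_{i_1}]\cdots[\omega_{i_{n-2}}]=\int_{\hat X}(2rc_2(\hat H)-(r-1)c_1(\hat H)^2)\wedge p^*\omega_{i_1}\wedge\cdots\wedge p^*\omega_{i_{n-2}}$. This differs from $\int_X(2rc_2(H)-(r-1)c_1(H)^2)\wedge\omega_{i_1}\wedge\cdots\wedge\omega_{i_{n-2}}$ only by the Yang--Mills energy lost in passing from the resolution to $X$, which by the structure theory of Bando--Siu and Tian for blow-up loci of Hermitian--Yang--Mills connections is carried by a closed positive $(2,2)$-current $T$ on $\hat X$ supported on $\mathrm{Exc}(p)=p^{-1}(X^{sing})$. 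Since $X^{sing}$ is finite, $p_*T$ is a closed positive $(2,2)$-current supported on finitely many points, hence zero as $n\ge 3$, so $\int_{\hat X}T\wedge p^*\omega_{i_1}\wedge\cdots\wedge p^*\omega_{i_{n-2}}=\int_X p_*T\wedge\omega_{i_1}\wedge\cdots\wedge\omega_{i_{n-2}}=0$; therefore the two discriminants agree, which is the asserted equality.

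The main obstacle I expect is the reflexive-pullback comparison across a domination in (i): one must track the exceptional corrections to both $c_1$ and $c_2$ of $(q_a^*\hat\Ecal_a)^{**}$ precisely enough to know that every non-pullback term is supported over $X^{sing}$, so that the codimension-two vanishing really applies. A secondary difficulty, in (iii), is justifying in this singular setting that the energy defect is a genuine closed $(2,2)$-current supported on a codimension-two analytic set rather than merely a defect measure---this is exactly where the regularity theory for Yang--Mills blow-up loci must be invoked.
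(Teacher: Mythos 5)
Your treatment of part (1) is essentially the paper's own argument: the paper's Proposition \ref{Prop2.13} proves independence of the resolution and of $\hat\Ecal\in\Ebold_p$ by representing $p^*\omega_{i_1}\wedge\cdots\wedge p^*\omega_{i_{n-2}}$ by a closed form $\Omega^0_{n-2}$ compactly supported in the locus where all the sheaves are identified (possible exactly because the relevant set has codimension $\geq 3$), and identifies the very ample case by noting that $D_{i_1}\cap\cdots\cap D_{i_{n-2}}$ and $\Omega^0_{n-2}$ define the same functional on $H^4_{dR}(\hat X,\RBbb)$; your pushforward/projection-formula phrasing with the dimension count on $X^{sing}$ is the same codimension mechanism, at a comparable level of rigor (in both cases one must know the difference of discriminant classes is ``invisible'' off the exceptional set), and the Bertini reduction to a smooth surface in $X^{reg}$ matches the paper.

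The genuine gap is in part (2). The paper does not compare a Hermitian--Yang--Mills metric on $\hat X$ with the admissible metric $H$ via a defect current; it proves the Chern--Weil formula directly on $X$: near each isolated singular point one writes $\omega_{i_1}\wedge\cdots\wedge\omega_{i_{n-2}}=d\psi_0+\Omega_0$ with $\psi_0$ the restriction of a smooth ambient form supported near the singularities and $\Omega_0$ closed and supported in $X^{reg}$, and then invokes \cite[Lemma 2.5]{ChenWentworth:21c} (a Stokes-type statement for admissible connections against exact forms restricted from the embedding space, using the $L^2$ curvature bound) to show $\int_X(2rc_2(H)-(r-1)c_1(H)^2)\wedge d\psi_0=0$, after which the pairing against $\Omega_0$ is the algebraic quantity from part (1). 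Your route instead requires two unproven inputs. First, you assert $\hat\Ecal$ is ``stable on the smooth compact K\"ahler manifold $\hat X$'' and hence carries an HYM metric $\hat H$: stable with respect to which K\"ahler class is unclear (it is only known stable for the degenerate class $p^*\omega_1\wedge\cdots\wedge p^*\omega_{n-1}$ and its perturbations), though this is repairable since any smooth metric on the locally free $\hat\Ecal$ computes $\Delta(\hat\Ecal)\cdot[p^*\omega_{i_1}]\cdots[p^*\omega_{i_{n-2}}]$. Second, and essentially, you assume the discrepancy between $\int_{\hat X}CW(\hat H)\wedge p^*\omega_{i_1}\wedge\cdots$ and $\int_X CW(H)\wedge\omega_{i_1}\wedge\cdots$ is a \emph{closed positive} $(2,2)$-current supported on $\mathrm{Exc}(p)$, to which a support theorem applies after pushforward. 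Nothing in the paper (or in the limiting construction) provides this: the only uniform control is the mass bound obtained \emph{after} wedging with the degenerate semi-positive forms, so the Fatou argument yields a nonnegative defect \emph{measure}, not a current that is closed, of order zero, or positive in the sense needed for Federer/Demailly-type support theorems; for a general comparison metric $\hat H$ the discrepancy is a transgression term with no sign at all. Indeed the paper stresses that such loss along the exceptional set is expected in general, so the isolated-singularity equality cannot be deduced from a soft support argument on $\hat X$; it is exactly the cut-off/Stokes control of the exact piece $d\psi_0$ on $X$ (plus the absence of bubbling on $X^{reg}$, Corollary \ref{Empty bubbling set}) that makes it work. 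As written, your proof of part (2) therefore does not go through.
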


\subsection{Bogomolov-Gieseker inequality for semistable sheaves} Now we study the Bogomolov-Gieseker inequality for semistable reflexive sheaves. For this, we suppose $(X, \omega_1\wedge \cdots \omega_{n-1})$ is a normal variety with $n-1$ K\"ahler metrics. Then we have

\begin{thm}\label{Theorem1.16}
Given a semistable reflexive sheaf $\Fcal$ over $(X, \omega_{1} \wedge \cdots \omega_{n-1})$, the following holds
$$\Delta(\Ecal).[\omega_{i_1}] \cdots [\omega_{i_{n-2}}]\geq 0.$$
Suppose for some $\hat{\Ecal}\in \Ebold_p$ and some resolution $p: \hat{X} \rightarrow X$,
$$\Delta(\hat{\Ecal}).[p^*\omega_{i_1}] \cdots [p^*\omega_{i_{n-2}}]=0,
$$
then $\Fcal$ admits a filtration 
$$
0\subset \Fcal_1 \subset \cdots \Fcal_{m}=\Fcal
$$
so that $\Fcal_{i}/\Fcal_{i-1}$ is torsion free and $(\Fcal_{i}/\Fcal_{i-1})|_{X^{reg}}$ is projectively flat, i.e.  $(\Fcal_i/\Fcal_{i-1})|_{X^{reg}}$ is defined by a representation $\rho: \pi_1(X^{reg}) \rightarrow \PU(m_i)$ where $m_i=\rank(\Fcal_i/\Fcal_{i-1})$.
\end{thm}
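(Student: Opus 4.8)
The plan is to reduce to the stable case already settled by Theorem~\ref{Singular DUY theorem} and Corollary~\ref{Bogomolov-Gieseker inequality} by means of a Jordan--H\"older filtration, and then to control the discriminant along the filtration by combining an elementary additivity identity with the Hodge--Riemann bilinear relations for multipolarizations. To this end, first choose a Jordan--H\"older filtration of $\Fcal$: a filtration $0=\Fcal_0\subset\Fcal_1\subset\cdots\subset\Fcal_m=\Fcal$ by saturated subsheaves, so that each $\Fcal_i$ is reflexive, each quotient $\Gcal_i:=\Fcal_i/\Fcal_{i-1}$ is torsion free, and $\Gcal_i^{**}$ is a stable reflexive sheaf of slope $\mu(\Fcal)$ with respect to $\omega_1\wedge\cdots\wedge\omega_{n-1}$ (stability tested on a resolution as in Section~\ref{Chern classes}). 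By Theorem~\ref{Singular DUY theorem} each $\Gcal_i^{**}$ carries an admissible Hermitian--Yang--Mills metric, so Corollary~\ref{Bogomolov-Gieseker inequality} gives $\Delta(\Gcal_i^{**})\,[\omega_{i_1}]\cdots[\omega_{i_{n-2}}]\ge 0$, with $\Gcal_i^{**}|_{X^{reg}}$ projectively flat when equality holds.

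Next, fix a K\"ahler resolution $p:\hat X\to X$ and any $\hat{\Ecal}\in\Ebold_p$. Restricting the filtration to the locus where $p$ is an isomorphism and saturating inside $\hat{\Ecal}$ yields a filtration $0=\hat{\Fcal}_0\subset\cdots\subset\hat{\Fcal}_m=\hat{\Ecal}$ over the smooth $\hat X$ whose graded pieces $\hat{\Gcal}_i$ are torsion free, agree with $\Gcal_i$ off the exceptional locus, and whose double duals $\hat{\Gcal}_i^{**}$ lie in $\Ebold_p(\Gcal_i^{**})$ (the space defined exactly as $\Ebold_p$ but with $\Gcal_i^{**}$ in place of $\Fcal$). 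Since reflexivization changes $\ch_2$ only by an effective codimension-two cycle, leaving $c_1$ and the rank fixed, and $p^*\omega_i$ is nef,
\[
\int_{\hat X}\Delta(\hat{\Gcal}_i)\wedge p^*\omega_{i_1}\wedge\cdots\wedge p^*\omega_{i_{n-2}}\ \ge\ \int_{\hat X}\Delta(\hat{\Gcal}_i^{**})\wedge p^*\omega_{i_1}\wedge\cdots\wedge p^*\omega_{i_{n-2}}\ \ge\ \Delta(\Gcal_i^{**})\,[\omega_{i_1}]\cdots[\omega_{i_{n-2}}],
\]
the last inequality because $\hat{\Gcal}_i^{**}$ is admissible in the infimum defining $\Delta(\Gcal_i^{**})$. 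Additivity of the Chern character on short exact sequences of coherent sheaves over the smooth $\hat X$ gives the classical identity
\[
\Delta(\hat{\Ecal})\ =\ \sum_i\frac{r}{r_i}\,\Delta(\hat{\Gcal}_i)\ -\ \Xi ,\qquad r=\rank\Fcal,\quad r_i=\rank\Gcal_i,
\]
where $\Xi$ is a nonnegative combination of squares of classes lying in the span of the differences $\tfrac{1}{r_i}c_1(\hat{\Gcal}_i)-\tfrac{1}{r_j}c_1(\hat{\Gcal}_j)$. As the $\Gcal_i$ all have slope $\mu(\Fcal)$ and $\int_{\hat X}(\text{exceptional class})\wedge p^*\omega_1\wedge\cdots\wedge p^*\omega_{n-1}=0$, every such difference is primitive with respect to $p^*\omega_1\wedge\cdots\wedge p^*\omega_{n-1}$; the Hodge--Riemann property for multipolarizations (\cite{Timorin:98}; cf.\ Section~\ref{DUY for mutipolarizations}), applied after replacing $p^*\omega_i$ by the K\"ahler class $p^*\omega_i+\varepsilon\hat{\omega}$ on the K\"ahler manifold $\hat X$ and letting $\varepsilon\to0$, then yields $\int_{\hat X}\eta^2\wedge p^*\omega_{i_1}\wedge\cdots\wedge p^*\omega_{i_{n-2}}\le 0$ for every primitive $(1,1)$-class $\eta$, so $\int_{\hat X}\Xi\wedge p^*\omega_{i_1}\wedge\cdots\wedge p^*\omega_{i_{n-2}}\le 0$. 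Combining the displays,
\[
\int_{\hat X}\Delta(\hat{\Ecal})\wedge p^*\omega_{i_1}\wedge\cdots\wedge p^*\omega_{i_{n-2}}\ \ge\ \sum_i\frac{r}{r_i}\,\Delta(\Gcal_i^{**})\,[\omega_{i_1}]\cdots[\omega_{i_{n-2}}]\ \ge\ 0 ,
\]
and taking the infimum over all K\"ahler $p$ and all $\hat{\Ecal}\in\Ebold_p$ proves $\Delta(\Fcal)\,[\omega_{i_1}]\cdots[\omega_{i_{n-2}}]\ge 0$.

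For the rigidity statement, suppose $\Delta(\hat{\Ecal})\,[p^*\omega_{i_1}]\cdots[p^*\omega_{i_{n-2}}]=0$ for some resolution $p$ and some $\hat{\Ecal}\in\Ebold_p$. Then every inequality in the last display is forced to be an equality; in particular $\Delta(\Gcal_i^{**})\,[\omega_{i_1}]\cdots[\omega_{i_{n-2}}]=0$ for each $i$, so the equality case of Corollary~\ref{Bogomolov-Gieseker inequality} shows that each $\Gcal_i^{**}|_{X^{reg}}$, hence $(\Fcal_i/\Fcal_{i-1})|_{X^{reg}}$, is projectively flat, defined by a representation $\pi_1(X^{reg})\to\PU(m_i)$ with $m_i=\rank(\Fcal_i/\Fcal_{i-1})$; the Jordan--H\"older filtration $0\subset\Fcal_1\subset\cdots\subset\Fcal_m=\Fcal$ is then the one claimed.

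The main obstacle I expect is the interface between the intrinsic discriminant on $X$ (an infimum over K\"ahler resolutions and over the sets $\Ebold_p$) and the Chern-class bookkeeping on a single resolution: one must transport the Jordan--H\"older filtration to each $\hat{\Ecal}\in\Ebold_p$ with graded pieces landing in the correct components $\Ebold_p(\Gcal_i^{**})$, verify that the correction classes remain $\omega$-primitive once the contributions supported on the exceptional divisor are taken into account, and—because the pulled-back classes $p^*\omega_i$ are merely nef—confirm that the Hodge--Riemann negativity persists under the degeneration from genuine K\"ahler classes on $\hat X$ (the perturbation indicated above). Once this analytic/geometric input is in place, the equality analysis is essentially formal, being read off from the chain of inequalities together with the rigidity already established in the stable case.
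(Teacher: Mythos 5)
Your overall route for the inequality --- Jordan--H\"older filtration, transporting it to $\hat{\Ecal}$ on a resolution, the additivity identity for $\Delta$, the mixed Hodge--Riemann relations to dispose of the cross terms, and the stable-case results for the graded pieces --- is essentially the paper's argument (the paper bounds the discriminants of the graded pieces via their stability for the perturbed K\"ahler classes on $\hat X$ and the smooth Bogomolov--Gieseker inequality, whereas you go through the intrinsic discriminant on $X$ plus the torsion correction; these are interchangeable). One step, however, is not actually carried out: in your Hodge--Riemann argument the class $\eta=\tfrac{1}{r_s}c_1(\hat{\Gcal}_s)-\tfrac{1}{r_t}c_1(\hat{\Gcal}_t)$ is primitive only with respect to the degenerate product $p^*\omega_{1}\wedge\cdots\wedge p^*\omega_{n-1}$, not with respect to the perturbed K\"ahler classes $p^*\omega_j+\varepsilon\hat{\omega}$, and the Dinh--Nguyen/Timorin negativity only applies to classes primitive for the K\"ahler product. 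Before letting $\varepsilon\to 0$ you must replace $\eta$ by $\eta-c^{\varepsilon}[p^*\omega_{i_{n-1}}]$ with $c^{\varepsilon}$ chosen to restore primitivity for the perturbed classes, verify $c^{\varepsilon}\to 0$ (using that $\eta$ pairs to zero with the degenerate product and that $\int_X\omega_{i_{n-1}}^2\wedge\omega_{i_1}\wedge\cdots\wedge\omega_{i_{n-2}}>0$), and only then pass to the limit; this is precisely the paper's correction constants $c^i_{st}$. You name this as an expected obstacle, but as written the claim ``$\int\eta^2\wedge p^*\omega_{i_1}\wedge\cdots\wedge p^*\omega_{i_{n-2}}\le 0$ for every primitive $\eta$'' is not yet proved.

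The more serious gap is in the equality case. Your chain only yields that the reflexive hulls $(\Fcal_i/\Fcal_{i-1})^{**}|_{X^{reg}}$ are projectively flat, and the concluding ``hence $(\Fcal_i/\Fcal_{i-1})|_{X^{reg}}$ is projectively flat'' is unjustified: for the quotient itself to be defined by a representation into $\PU(m_i)$ it must in particular be locally free on $X^{reg}$, i.e.\ one must show $\Gcal_i:=\Fcal_i/\Fcal_{i-1}$ is reflexive over $X^{reg}$, and a priori $\Gcal_i\subsetneq\Gcal_i^{**}$ along a codimension-two set meeting $X^{reg}$. The paper closes exactly this point: the forced equality $\int\Delta(\hat{\Gcal}_i)\wedge p^*\omega_{i_1}\wedge\cdots\wedge p^*\omega_{i_{n-2}}=\int\Delta(\hat{\Gcal}_i^{**})\wedge p^*\omega_{i_1}\wedge\cdots\wedge p^*\omega_{i_{n-2}}$ (which is available from your own chain but never exploited) says that the effective cycle $\ch_2(\tau_i)=\PD(\sum_k m_k\Sigma_k)$ of the torsion $\tau_i=\hat{\Gcal}_i^{**}/\hat{\Gcal}_i$ pairs to zero with the pulled-back classes, which forces $\mathrm{supp}(\tau_i)$ to have codimension at least three over $X^{reg}$; then, since the adjacent graded pieces are locally free over $X^{reg}$, the extensions locally split off a codimension-three set and the quotients are reflexive, hence locally free and genuinely projectively flat, over $X^{reg}$ (the argument of \cite[Proposition $2.3$]{SibleyWentworth:15}). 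Without this final step the rigidity statement as formulated in the theorem is not established.
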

\begin{rmk}
\begin{itemize}
\item Given the singular Donaldson-Uhlenbeck-Yau theorem we proved above, the key ingredient in this theorem lies in the fact that a weak Hodge-Riemann property still holds on the resolution for multiple semi-K\"ahler classes. This follows essentially from \cite{DinhNguyen:06} which generalizes the classical Hodge-Riemann bilinear relation for a fixed K\"ahler classes to the case of multiple K\"ahler classes.

\item Assume $X$ is smooth in codimension two, by Corollary \ref{Cor1.10}, the condition does not depend on the resolution and thus one even has abundant natural examples from algebraic geometry. As we already see in the projective case with one polarization, this result is already very useful in improving the known algebraic geometric result (see \cite{ChenWentworth:21c}).
\end{itemize}

\end{rmk}

\subsection{Quotients of torus} Now we apply our results to give a new criteria about when a normal complex space with klt singularities is a quotient of a complex torus by a finite group by generalizing the results in \cite{CGG:21}. We refer \cite{CGG:21} for a detailed account on the importance of such problems.

\begin{cor}\label{Theorem 1.18}
Let $(X,[\omega])$ be a compact normal K\"ahler variety with klt singularities satisfying $c_1(X)=0\in H^2(X, \RBbb)$. Suppose there exists a reflexive sheaf $\hat{\Ecal}$ for some resolution $p: \hat{X}\rightarrow X$ with $\hat{\Ecal}|_{X^{reg}}\cong \Tcal_X|_{X^{reg}}$
$$
\Delta(\hat{\Ecal}) [p^*\omega]^{n-2}=0
$$
then $X$ is a quotient of a complex torus by a finite group acting freely in codimension one.
\end{cor}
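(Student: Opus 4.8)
The plan is to apply Theorem \ref{Theorem1.16} to the reflexive tangent sheaf $\Tcal_X$ and then to run the torus-quotient argument of \cite{CGG:21}. First I would record that, since $\hat{\Ecal}|_{X^{reg}}\cong\Tcal_X|_{X^{reg}}$, the sheaf $\hat{\Ecal}$ lies in $\Ebold_p$ for the reflexive sheaf $\Ecal=\Tcal_X$ on $X$. Because $X$ is klt with $c_1(X)=0$, the sheaf $\Tcal_X$ is slope semistable of slope zero with respect to $[\omega]$ — this is the K\"ahler version of Miyaoka's generic semipositivity; see \cite{CGG:21} and the references therein, or note that the singular Ricci-flat K\"ahler metric on $X^{reg}$ induces an admissible Hermitian-Yang-Mills metric on $\Tcal_X$ with vanishing Einstein factor. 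Hence Theorem \ref{Theorem1.16} gives $\Delta(\Tcal_X)[\omega]^{n-2}\geq 0$. On the other hand, $\Delta(\Tcal_X)[\omega]^{n-2}$ is by definition an infimum over K\"ahler resolutions $q$ and over the sheaves in $\Ebold_q$, so the membership $\hat{\Ecal}\in\Ebold_p$ forces
\[
\Delta(\Tcal_X)[\omega]^{n-2}\ \leq\ \Delta(\hat{\Ecal})[p^*\omega]^{n-2}\ =\ 0 .
\]
Therefore $\Delta(\Tcal_X)[\omega]^{n-2}=0$, and the equality clause of Theorem \ref{Theorem1.16} produces a filtration $0\subset\Fcal_1\subset\cdots\subset\Fcal_m=\Tcal_X$ whose graded pieces $\Gcal_i=\Fcal_i/\Fcal_{i-1}$ are torsion free with $\Gcal_i|_{X^{reg}}$ projectively flat, and each $\Gcal_i$ has slope zero since $c_1(X)=0$.

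Next I would upgrade projective flatness of the graded pieces to a genuine flat structure, after a quasi-\'etale cover. Here the plan is to follow \cite{CGG:21}: using that $K_X$ is numerically trivial and $X$ is klt, one produces a quasi-\'etale cover $\tilde X\to X$ (\'etale in codimension one) on which $\Tcal_{\tilde X}|_{\tilde X^{reg}}$ is a flat, locally free holomorphic vector bundle given by a representation of $\pi_1(\tilde X^{reg})$. Analytically this is the statement that the singular Ricci-flat K\"ahler metric $\omega_{RF}$ on $\tilde X^{reg}$ — which is an admissible Hermitian-Yang-Mills metric on $\Tcal_{\tilde X}$ with vanishing Einstein factor, and hence, by the uniqueness in Theorem \ref{Singular DUY theorem}, realizes the equality case of Corollary \ref{Bogomolov-Gieseker inequality} — has identically vanishing curvature, so that $(\tilde X^{reg},\omega_{RF})$ is flat K\"ahler.

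Finally, with $\Tcal_{\tilde X}|_{\tilde X^{reg}}$ flat and $K_{\tilde X}$ numerically trivial, the soldering one-form together with the flat connection (the Levi-Civita connection of $\omega_{RF}$, which is torsion free) defines a developing map from the universal cover of $\tilde X^{reg}$ to $\CBbb^n$ which is a local biholomorphism; since $\tilde X$ is klt, $\tilde X\setminus\tilde X^{reg}$ has codimension at least two, so the developing map extends across the singular set, and a standard argument (see \cite{CGG:21}, compare also \cite{GKP:16a,GKP:16b}) identifies $\tilde X$ with a complex torus $\CBbb^n/\Lambda$. Then $X=\tilde X/G$ with $G$ finite acting freely in codimension one, which is the desired conclusion. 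The hard part will be the middle step: propagating the Bogomolov-Gieseker equality through the essential singularities of $X$ to obtain honest flatness, and controlling the analytic continuation of the developing map there — precisely the phenomena near essential singularities that the earlier sections of the paper are built to handle, and where the algebro-geometric tools available when $X$ is smooth in codimension two are unavailable.
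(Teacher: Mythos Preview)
Your proposal is correct in spirit but takes a longer route than the paper, and the detour through Theorem~\ref{Theorem1.16} is not really used. The paper's proof is three lines: by \cite[Theorem~A]{Guenancia:15} the tangent sheaf $\Tcal_X$ is \emph{polystable} (not merely semistable) with slope zero; then the equality case of Corollary~\ref{Bogomolov-Gieseker inequality}, applied to the stable summands together with $c_1(X)=0$, gives that $\Tcal_X|_{X^{reg}}$ is flat; finally one invokes \cite[Theorem~D]{CGGN:22}, which says precisely that a klt compact K\"ahler space with $\Tcal_X|_{X^{reg}}$ flat is a torus quotient by a finite group acting freely in codimension one.

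Your first paragraph, using Theorem~\ref{Theorem1.16}, only produces a filtration of $\Tcal_X$ with projectively flat graded pieces, and you never actually use this filtration afterward: your second paragraph restarts with the Ricci-flat metric and the equality case of Corollary~\ref{Bogomolov-Gieseker inequality}, which is exactly the paper's argument and yields flatness of $\Tcal_X|_{X^{reg}}$ in one stroke, making the filtration superfluous. The quasi-\'etale cover you introduce is also unnecessary at this stage---the paper works directly on $X$. Your final paragraph is essentially a sketch of what \cite[Theorem~D]{CGGN:22} proves; the paper simply cites it as a black box. Finally, the ``hard part'' you flag---propagating the Bogomolov--Gieseker equality through the essential singularities---is precisely what Corollary~\ref{Bogomolov-Gieseker inequality} is built to do, so there is no residual difficulty once you are willing to quote it.
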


\begin{proof}
This follows from Corollary \ref{Bogomolov-Gieseker inequality} together with the remark in \cite[Page 3]{CGG:21}. More precisely, by \cite[Theorem A]{Guenancia:15}, we know that the tangent sheaf $\Tcal_{X}$ is polystable with slope being zero. By Corollary \ref{Bogomolov-Gieseker inequality}, we know $\Tcal_X|_{X^{reg}}$ is flat since $c_1(X)=0$. Now the conclusion follows from \cite[Theorem D]{CGGN:22}.
\end{proof}

\begin{rmk}
This generalizes the key direction in Theorem $A$ in \cite{CGG:21} by removing the assumption of $X$ being smooth in codimension two. In \cite{CGG:21}, $X$ is assumed smooth in codimension two, thus $\hat{\Ecal}$ can be chosen to be $\Tcal_{\hat{X}}$ and the condition is equivalent to
$$
\int_{\hat{X}} c_2(\hat{X})\wedge [p^*\omega]^{n-2}=0.
$$
\end{rmk}

\subsection{Sketch of the proof} 
\subsubsection{Regarding singular Donaldson-Uhlenbeck-Yau theorem} The strategy for the proof is standard and has been used in \cite{ChenWentworth:21c} in the projective case. It is done by passing to a resolution of singularities and studying the corresponding gauge theoretical limits, which goes back to \cite{Donaldson:85, UhlenbeckYau:86, BandoSiu:94}. The subtlety lies really in how to take care of various technical difficulties in this process with new ideas and prove it in the multi-K\"ahler setting. Start with any resolution $p:\hat X \rightarrow X$ so that $\hat \Ecal :=(p^*\Ecal)^{**}$ is locally free. Write $X=X^s\cup X^{reg}$ as a union of singular and smooth parts. Fix any K\"ahler metric $\theta$ on $\hat X$. Then one can show $\hat \Ecal$ is stable with respect to $(p^*\omega_1+i^{-1} \theta) \wedge \cdots (p^*\omega_{n-1}+i^{-1} \theta)$ for any $i>>1$, which essentially follows from the boundedness results in \cite{Toma:19}. Thus the Donaldson-Uhlenbeck-Yau theorem for multipolarizations (\cite{ChenWentworth:21b}) gives a family of Hermitian-Yang-Mills metrics $H_i$ with respect to the perturbed metrics. Let $\hat E$ be the underlying smooth bundle for $\hat{\Ecal}$. By normalizing gauge, we get a sequence of Hermitian-Yang-Mills connections on $\hat E$ with a unitary metric $H$. By known gauge theoretical results (\cite{ChenWentworth:21a}), after passing to a subsequence, up to gauge transforms, $A_i$ converges to a limiting Hermitian-Yang-Mills connection $A_\infty$ over $X\setminus Z$ where $Z=X^s\cup \Sigma$ and $\Sigma$ is a codimension two subvariety of $X^{reg}$. The proof is done in the following steps 
\begin{enumerate}
\item there exists a nontrivial map $\Phi: \Ecal_\infty \rightarrow \Ecal$ where $\Ecal_\infty$ the reflexive sheaf defined by $A_\infty$ over $X\setminus X^s$; for any global section $s$ of $\Ecal_\infty$, $\log^+|s|^2 \in  W^{1,2}\cap L^\infty$. The non-triviality of this step essentially lies in that we need to take limits of holomorphic sections over noncompact manifolds. 

\item in the rank $1$ case, show $A_\infty$ computes $\mu(\Ecal)$ and thus conclude   $\Phi$ is an isomorphism. Note there is no Chern-Weil formula in the singular setting and the existence of admissible Hermitian-Yang-Mills connections in the rank one case is already highly nontrivial.

\item prove $\Phi$ is an isomorphism for general rank. This crucially depends on (1) and (2). 
\end{enumerate}

For (1), we crucially use the fact that $Z$ has codimension two as in \cite{ChenSun:18}. Fix a smooth metric $H'$ on $\Ecal$. The idea is to take limit of the sections given by the identity map $\id \in \Hom(\Ecal, \Ecal)$.  We can take a precompact exhaustion $X^\epsilon$ of $X\setminus Z$ where $X\setminus X^\epsilon \rightarrow Z$ as $\epsilon\rightarrow 0$. Normalize the identity map to have $L^2$ norm equal to one over a fixed region $X^\epsilon$ with respect to the metric $H_i^*\otimes H'$ on $\Hom(\Ecal, \Ecal)$ and the metric on $X$ given by $\omega_1\wedge \cdots 
\omega_{n-1}$. This gives us a sequence of holomorphic sections. By standard elliptic theory, passing to a subsequence, one can take a limit of this sequence. But the problem is that we are working with noncompact base, thus the limit might be trivial. However, we can prove a useful property using the structure of $Z$: for any $z\in X^{\frac{\epsilon}{2}}$, there exists a holomorphic curve $D\subset X^{reg}$ passing $z$ and $\partial D \subset X^{\epsilon}$. This enables us to restrict our sections to $D$, and apply maximum principle to get control over $X^{\frac{\epsilon}{2}}$. In particular, we get a nontrivial limit over $X^\epsilon$ which by induction implies the existence of a nontrivial limit over $X^{reg}.$ For the regularity statement about sections $s$ of $\Ecal_\infty$, one can first show $\log^{+}|s|^2\in W^{1,2}_{loc}$ by an adaption of Bando and Siu's argument (\cite[Theorem 2]{BandoSiu:94}) using one dimensional slices instead of two dimensional slices. Then it follows from \cite{LiTian1995} and \cite{Simon:83} that there exists a global Sobolev inequality for $W^{1,2}(X^{reg})$ functions, thus one can apply Moser iteration to get the $L^\infty$ bound (see Section \ref{Regularity results}). 

For (2), if we assume $\rank \Ecal=1$, then the Remmert-Stein extension theorem implies $\Ecal_\infty$ can be extended to be a reflexive sheaf over $X$ since we have a nontrivial map $\Phi:\Ecal_\infty \rightarrow \Ecal$. If we can show $\mu(\Ecal_\infty)=\mu(\Ecal)$ which will be a very crucial fact needed in (3) as well, the map has to be an isomorphism. This relies on a key observation that such a Chern-Weil formula still exists by using the fact that $A_\infty$ comes from the limit of smooth ones on the resolutions and the fact that $X^s\subset X$ is a complex subvariety of codimension at least two. 

For (3), by Siu's theorem (\cite{Siu:75}), one can show actually the saturation $\Gcal$ of the image of $\Phi$ defines a coherent analytic subsheaf of $\Ecal$. If $\rank \Gcal<\rank\Ecal$, then $\mu(\Gcal)<\mu(\Ecal)$ since $\Ecal$ is stable. By applying the Weizenb\"ock formula to sections of $  \Hom(\wedge^{\rank \Gcal} \Ecal_\infty, \det \Gcal)$, this will give a contradiction. Here we need the crucial fact that $\det \Gcal$ admits a Hermitian-Yang-Mills metric which does compute the slope of $\det \Gcal$ by step (2). In particular, $\Phi$ has full rank. Now by (2), we know $\det(\Ecal_\infty)\cong\det(\Ecal)$ which will force $\Phi$ to be an isomorphism. The conclusion follows.

\subsubsection{Bogomolov-Gieseker inequality} As we mentioned in the introduction, the Bogomolov-Gieseker inequality follows from the gauge theoretical picture when we shrink the exceptional divisor naturally. We first deal with $p:\hat{X} \rightarrow X$ so that $\hat{\Ecal}=(p^*\Ecal)^{**}$ is locally free. On the resolution, the quantities in the Bogomolov-Gieseker inequality can be directly computed by the perturbed Hermitian-Yang-Mills metrics. Now the limit of this equation will give us what we need. This is due to the fact that by the Hodge-Riemann property for multipolarizations, the integrand given by the Hermitian-Yang-Mills metrics on the resolution defines a sequence of Radon measures with uniformly bounded mass on $\hat X$, thus the inequality follows from Fatou's lemma. For general resolutions $p:\hat{X} \rightarrow X$, this follows from the same argument by our main theorem applied to stable reflexive sheaves over $\hat X$ and that the Chern-Weil formula still holds for such with admissible Hermitian-Yang-Mills metrics.  

Now the various cases of simplifications follow from some topological arguments as well as some more subtle analytic properties.  

The Bogomolov-Gieseker inequality for semistable sheaves follows essential from results on Hodge-Riemann properties for multiple K\"ahler metrics \cite{DinhNguyen:06} together with our main results for stable reflexive sheaves.

\subsection*{Acknowledgment}
The author would like to thank Richard Wentworth for helpful discussions. The author would also like to thank the referees for pointing out a mistake, valuable suggestions and questions which improved the presentations of the paper, and pointing out related references.  This work is partially supported by NSERC and the ECR supplement.

\section{Preliminary results}
\subsection{Varieties with mutiple K\"ahler metrics}\label{Kaehler forms}
In this section, we will recall the notion of K\"ahler metrics on normal varieties from \cite{varouchas1989kahler}. 

Let $X$ be a normal variety. We will always write $X=X^s\cup X^{reg}$ where $X^s$ denotes the singular part of $X$ having codimension at least two; $X^{reg}$ denotes the smooth part of $X$. A local function $f$ on $X$ is called to be smooth if for any local embedding of $X$, i.e. $X\cap U \rightarrow U \subset \CBbb^N$, it can be extended to be a smooth function. Now a local smooth strongly plurisubharmonic  function on $X$ means a smooth function which can be extended to be a smooth strongly plurisubharmonic function for some embedding.

\begin{defi}[Definition-Lemma \cite{varouchas1989kahler}]
A K\"ahler metric $\omega$ on $X$ is defined by a cover $\{(U_i, \rho_i)\}$, where $U_i\subset \CBbb^N$ is open subset, $\rho_i$ is a smooth strongly plurisubharmonic function on $\overline{U_i}$, i.e. in a neighborhood of $\overline{U_i}$ in $\CBbb^N$, and $\omega|_{X^{reg}\cap U_i}=\sqrt{-1}\partial\bar{\partial}\rho_i|_{X^{reg}\cap U_i}$. By \cite[Page 23]{varouchas1989kahler}, this gives a C\v ech class $[\omega]\in H^2(X, \mathbb{R})$, which is usually referred as a K\"ahler class on $X$. 
\end{defi}

\begin{rmk}
We will also need to pull back a K\"ahler class through resolutions $p: \hat{X} \rightarrow X$. For this, we will keep using the fact that $p^*[\omega]$ is the same thing as locally pulling back the smooth defining functions $\rho_i$ as $\rho_i\circ p$ in the definition above, i.e. $p^*\omega$ as a smooth de-Rahm class is locally given by $\sqrt{-1}\partial \dbar (\rho_i \circ p)$ (see \cite[Proposition 3.5]{GK:20}). Abusing notation, we will not make a difference between the C\v ech class and the corresponding de Rham class while the context should make it clear. 
\end{rmk}

Now take any smooth resolution of $p: \hat{X} \rightarrow X$. As a consequence of the definition, we have the following properties for K\"ahler metrics on varieties. 
\begin{enumerate}
\item For each $i$, $p^*\omega_i$ defines a smooth de-Rham class in $\hat{X}$ which follows from that we can pull back the smooth plurisubharmonic functions to define the pull-back of the corresponding K\"ahler class; 
\item For dimensional reasons, the C\v ech class $p^*[\omega_1]\wedge \cdots p^*[\omega_{n-1}]|_{p^{-1}(Z)}=0$ if $Z$ has codimension at least two;
\item Similarly $p^*[\omega_{i_1}]\wedge \cdots p^*[\omega_{i_{n-2}}]|_{p^{-1}(Z)}=0$ if $Z$ has codimension at least three.
\end{enumerate}

We need the following well-known fact, for which we include a short explanation 
\begin{lem}
Let $Z\subset X$ be a subvariety containing $X^s$. Suppose $a\in H^*(\hat X, \mathbb{R})$ with $a|_{p^{-1}(Z)}=0$.  Denote by $[\Omega]$  the corresponding de-Rham class for $a$, then
$$
\Omega=\Omega^0+d\Phi
$$
where $\Omega^0$ is a smooth closed form with compact support in $\hat X\setminus p^{-1}(Z).$
\end{lem}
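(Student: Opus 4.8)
The plan is to use a standard relative (co)homology argument combined with a partition-of-unity / de Rham-theoretic construction. First I would recall the exact sequence of the pair $(\hat X, p^{-1}(Z))$: since $a|_{p^{-1}(Z)} = 0$, i.e. $\iota^* a = 0$ where $\iota : p^{-1}(Z) \hookrightarrow \hat X$ is the inclusion, exactness of
$$
H^*(\hat X, p^{-1}(Z); \mathbb{R}) \xrightarrow{j^*} H^*(\hat X, \mathbb{R}) \xrightarrow{\iota^*} H^*(p^{-1}(Z), \mathbb{R})
$$
shows that $a$ lifts to a relative class $\tilde a \in H^*(\hat X, p^{-1}(Z); \mathbb{R})$. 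The point is then to realize $\tilde a$ by a genuine differential form supported away from $p^{-1}(Z)$. Here I would invoke the de Rham theorem for the pair: relative de Rham cohomology can be computed by the complex of smooth forms on $\hat X$ that vanish on a neighborhood of $p^{-1}(Z)$ (or, more carefully, by the mapping-cone complex $\Omega^*(\hat X) \oplus \Omega^{*-1}(p^{-1}(Z))$). Choosing a closed form $\Omega^0$ representing $\tilde a$ in the former model gives a closed form with support in $\hat X \setminus p^{-1}(Z)$; one should shrink slightly to arrange compact support, using that $p^{-1}(Z)$ is compact (as $\hat X$ is compact) so its complement is exhausted by the supports of cutoff functions.

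Concretely, the steps I would carry out are: (i) pick any smooth closed form $\Omega$ on $\hat X$ representing $a$; (ii) since $\iota^*\Omega$ is exact on $p^{-1}(Z)$, write $\iota^*\Omega = d\psi$ for some smooth form $\psi$ on $p^{-1}(Z)$; (iii) extend $\psi$ to a smooth form $\tilde\psi$ on a tubular neighborhood $N$ of $p^{-1}(Z)$ (using that $p^{-1}(Z)$, even if singular, is a compact subanalytic set admitting a regular neighborhood with a deformation retraction, or — in the smooth-variety setting where $\hat X$ is a manifold and $p^{-1}(Z)$ a subvariety — using a resolution or Whitney-type extension; alternatively work with a tubular neighborhood of a neighborhood); (iv) let $\chi$ be a smooth bump function equal to $1$ near $p^{-1}(Z)$ and supported in $N$, and set $\Omega^0 = \Omega - d(\chi \tilde\psi)$. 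Then $\Omega^0$ is closed, cohomologous to $\Omega$, and on a smaller neighborhood of $p^{-1}(Z)$ where $\chi \equiv 1$ one computes $\Omega^0 = \Omega - d\tilde\psi$, which restricts on $p^{-1}(Z)$ to $\Omega - d\psi|_{p^{-1}(Z)} \dots$ — this needs a little care, since vanishing on $p^{-1}(Z)$ of the pullback is weaker than vanishing of the form in a neighborhood. So the cleaner route is to instead use that $a$ restricted to an open neighborhood $U$ of $p^{-1}(Z)$ is zero in $H^*(U,\mathbb R)$ (which follows from $a|_{p^{-1}(Z)}=0$ together with $p^{-1}(Z) \hookrightarrow U$ being a homotopy equivalence for a suitably chosen subanalytic neighborhood $U$), hence $\Omega|_U = d\eta$ for a smooth form $\eta$ on $U$; then take $\Omega^0 = \Omega - d(\chi \eta)$ with $\chi$ a cutoff supported in $U$ and equal to $1$ near $p^{-1}(Z)$, giving $\Omega^0 \equiv 0$ near $p^{-1}(Z)$ and hence compactly supported in $\hat X \setminus p^{-1}(Z)$ after shrinking. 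Setting $\Phi = -\chi\eta$ yields $\Omega = \Omega^0 + d\Phi$.

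The main obstacle is the passage from ``$a|_{p^{-1}(Z)} = 0$'' to ``$a$ vanishes on a neighborhood of $p^{-1}(Z)$'', i.e. producing the primitive $\eta$ on an open set $U$ rather than merely on the possibly-singular subset $p^{-1}(Z)$. This requires knowing that $p^{-1}(Z)$ admits a fundamental system of neighborhoods that deformation retract onto it — which holds because $p^{-1}(Z)$ is a compact subanalytic (indeed algebraic) subset of the smooth manifold $\hat X$, by the theory of subanalytic/semialgebraic neighborhood retracts (Łojasiewicz, Hironaka, or via a triangulation of the pair $(\hat X, p^{-1}(Z))$). Once such a $U$ with $H^*(U) \cong H^*(p^{-1}(Z))$ is in hand, the rest is the routine cutoff-function manipulation above. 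I would state this neighborhood-retract fact as a known result and keep the exposition brief, since the lemma is described in the paper as ``well-known.''
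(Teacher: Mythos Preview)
Your proposal is correct and, after the detours through relative cohomology and extension of primitives, your ``cleaner route'' is exactly the paper's argument: choose an open neighborhood $U$ of $p^{-1}(Z)$ that deformation retracts onto it, so $H^*(U,\RBbb)\cong H^*(p^{-1}(Z),\RBbb)$ and $\Omega|_U=d\eta$, then set $\Omega^0=\Omega-d(\chi\eta)$ for a cutoff $\chi$ supported in $U$ with $\chi\equiv 1$ near $p^{-1}(Z)$. (Note a small sign slip: you want $\Phi=\chi\eta$, not $-\chi\eta$.)
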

\begin{proof}
Indeed, choose an open neighborhood $U$ of $p^{-1}(Z)$ which deformation retracts onto $p^{-1}(Z)$, then $H^*(U, \mathbb{R})\cong H^*(p^{-1}(Z), \mathbb{R})$ through the restriction map, which can be seen by using the singular Cohomology. Here we used the functorial properties of the isomorphism between the C\v ech cohomology and singular Cohomology over $\RBbb$ \footnote{Later in various places, we will use such a property without mentioning.}. In particular, we know as an element in the C\v ech cohomology, $a|_U$ is also trivial, thus the corresponding de-Rham cohomology class $[\Omega|_U]$ is also trivial, i.e. $\Omega|_U=d\Phi'$ over $U$. Choose a cut-off function $\rho$ which is $1$ near $p^{-1}(Z)$. Then  $\Phi=\rho \Phi'$ does the job. 
\end{proof}

Given this, if $Z$ has codimension at least two, we can always write 
\begin{equation}\label{n-1 forms}
p^*\omega_1  \wedge \cdots p^*\omega_{n-1}=\Omega^0_{n-1}+d\Phi_{n-1}
\end{equation} 
where $\Omega^0_{n-1}$ is compact supported in $\hat{X} \setminus p^{-1}(Z)$ and if $Z$ has codimension at least three then
\begin{equation}\label{n-2 forms}
p^*\omega_{i_1} \wedge \cdots p^*\omega_{i_{n-2}}=\Omega^0_{n-2}+d\Phi_{n-2}
\end{equation}
where $\Omega^0_{n-2}$ is compact supported in $\hat{X}\setminus p^{-1}(Z)$. 

We will also need the following observation
\begin{prop}\label{Kahler metric equivalence}
Given two K\"ahler metrics $\omega_1, \omega_2$, there exists a constant $C>0$ so that 
$$
C^{-1} \omega_1 \leq \omega_2 \leq C \omega_1.
$$
\end{prop}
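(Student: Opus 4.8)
The plan is to reduce to a local statement near the singular set and then invoke compactness. Since $X$ is compact, the covers $\{(U_i,\rho_i)\}$ and $\{(V_j,\sigma_j)\}$ defining $\omega_1$ and $\omega_2$ may be taken finite, and it suffices to produce, for each $x\in X$, a relatively compact neighborhood $O\ni x$ in $X$ and a constant $C_O>0$ with $C_O^{-1}\omega_1\leq\omega_2\leq C_O\omega_1$ on $O\cap X^{reg}$; taking the maximum of the $C_O$ over a finite subcover then finishes the proof. Near a regular point this is immediate, since both metrics are then genuine smooth positive $(1,1)$-forms on a manifold, so the only real content is near a point $x\in X^s$.

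Fix $x\in X^s$ and shrink to a relatively compact $O\ni x$ contained in a chart of each cover, giving local embeddings $\iota\colon O\hookrightarrow\CBbb^N$ and $\iota'\colon O\hookrightarrow\CBbb^{N'}$ and smooth strongly plurisubharmonic functions $\tilde\rho$ on a neighborhood of $\overline{\iota(O)}$ and $\tilde\sigma$ on a neighborhood of $\overline{\iota'(O)}$, with $\omega_1|_{O\cap X^{reg}}=\iota^*(i\partial\bar\partial\tilde\rho)$ and $\omega_2|_{O\cap X^{reg}}=(\iota')^*(i\partial\bar\partial\tilde\sigma)$. Let $g_1:=\iota^*\omega_{\mathrm{eucl}}$ and $g_2:=(\iota')^*\omega_{\mathrm{eucl}}$ be the induced Euclidean metrics on $O\cap X^{reg}$. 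Strong plurisubharmonicity means the complex Hessian of $\tilde\rho$ is positive definite, hence bounded above and below by positive multiples of the Euclidean form on the compact set $\overline{\iota(O)}$; pulling back by the immersion $\iota$ gives $c_1 g_1\leq\omega_1\leq C_1 g_1$ on $O\cap X^{reg}$, and likewise $c_2 g_2\leq\omega_2\leq C_2 g_2$. It therefore remains to compare $g_1$ and $g_2$.

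To compare the two induced Euclidean metrics, observe that $\iota'\circ\iota^{-1}\colon\iota(O)\to\CBbb^{N'}$ is a morphism of analytic sets, so after shrinking $O$ it is the restriction of a holomorphic map $F$ defined on an open neighborhood of the compact set $\overline{\iota(O)}$ in $\CBbb^N$; thus $\iota'=F\circ\iota$ on $O$. Since $F^*\omega_{\mathrm{eucl}}$ is a smooth nonnegative $(1,1)$-form on that neighborhood, it is bounded above by a multiple of $\omega_{\mathrm{eucl}}$ on $\overline{\iota(O)}$, and pulling back by $\iota$ gives $g_2\leq C\,g_1$ on $O\cap X^{reg}$; symmetrically, extending $\iota\circ(\iota')^{-1}$ gives $g_1\leq C'\,g_2$. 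Chaining these estimates yields $C_O^{-1}\omega_1\leq\omega_2\leq C_O\omega_1$ on $O\cap X^{reg}$ with $C_O$ depending only on $c_i,C_i,C,C'$, completing the local step and hence the proof.

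I expect the main obstacle to be the bookkeeping around the two a priori unrelated local embeddings: one must arrange a common relatively compact $O$ on which both defining potentials live and on which the transition map extends holomorphically, and verify that strong plurisubharmonicity really does survive pullback as a two-sided bound (the upper bound is automatic, whereas the lower bound is exactly where \emph{strong} — as opposed to mere — plurisubharmonicity enters). The remaining inputs (finiteness of the covers from compactness, continuity of Hermitian forms on compact sets, local extendability of holomorphic functions on analytic subsets) are standard.
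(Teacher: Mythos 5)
Your proof is correct, and it takes a slightly different route from the paper's. The paper's argument extends the local potential $\rho_i'$ of $\omega_2$ to a smooth function on the ambient chart $U_i\subset\CBbb^N$ used to define $\omega_1$ (this is exactly what the paper's definition of a smooth function on $X$ — extendability for \emph{any} local embedding — provides), and then compares complex Hessians directly on the compact closure: the extension has bounded Hessian while $\rho_i$ has Hessian bounded below, giving $\sqrt{-1}\partial\bar\partial\rho_i'\leq C_i\sqrt{-1}\partial\bar\partial\rho_i$, and compactness plus symmetry finish. You instead compare each $\omega_k$ to the restriction of the ambient Euclidean metric in its own chart (this is where you correctly isolate the role of \emph{strong} plurisubharmonicity for the lower bound), and then compare the two induced Euclidean metrics by extending the chart transition map $\iota'\circ\iota^{-1}$ to a holomorphic map near the compact closure. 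The extension step is standard (locally it is the definition of a morphism of complex spaces; to get a single extension near $\overline{\iota(O)}$ one can use a Stein neighborhood, or simply avoid it by covering the compact set with finitely many balls on which local extensions exist and taking the largest constant), and it is essentially the same fact that makes the paper's notion of smoothness embedding-independent, so the two proofs rest on the same underlying inputs. The paper's version is shorter because it leans on the definition; yours is more self-contained in that it spells out the chart-transition mechanism and pinpoints exactly where strict positivity of the potentials is used.
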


\begin{proof}
It suffices to build such a bound near each point $z\in X^s$. Cover $X$ with open sets $(U_i, \rho_i)$ where $U_i \subset \CBbb^N$ and $\rho_i$ is a smooth strongly PSH function on $\overline{U_i}$ and $\omega_1=i\partial\bar{\partial} \rho_i$. Assume $z\in U_i$ for some fixed $i$. By assumption, we can always extend the local defining function $\rho_i'$ for $\omega_2$ to be a smooth function $U_i$.  Thus we know 
$$
\sqrt{-1}\partial\bar{\partial} \rho_i'\leq  C_i \sqrt{-1}\partial\bar{\partial} \rho_i.
$$ 
By compactness, we can cover $X$ with finitely many such open sets and choose $C$ to be the largest $C_i$. This gives $\omega_2 \leq C \omega_1$. The other inequality follows from symmetry where we might need to change $C$ a little.
\end{proof}

\begin{cor}\label{Metric Equivalence}
Given $(n-1)$ K\"ahler metrics $\omega_{i_1}, \cdots \omega_{i_{n-1}}$ on $X$, then for some constant $C>0$
$$
C^{-1} \omega_1^{n-1} \leq \omega_1 \wedge \cdots \omega_{n-1} \leq C \omega_1^{n-1}
$$
i.e. for any $(1,0)$ form $\theta$, 
$$C^{-1} \sqrt{-1} \theta \wedge \overline{\theta} \wedge \omega_1^{n-1} \leq \sqrt{-1} \theta \wedge \overline{\theta} \wedge \omega_1 \wedge \cdots \omega_{n-1} \leq C \sqrt{-1} \theta \wedge \overline{\theta} \wedge\omega_1^{n-1}$$
\end{cor}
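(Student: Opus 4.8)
The plan is to reduce the statement to a pointwise assertion on $X^{reg}$ and then combine Proposition \ref{Kahler metric equivalence} with the elementary monotonicity of wedge products of semipositive $(1,1)$-forms. After wedging with $\sqrt{-1}\,\theta\wedge\overline{\theta}$ the asserted inequalities are inequalities between $(n,n)$-forms, so it suffices to produce a single constant $C>0$ such that, at every point of $X^{reg}$ and for every $(1,0)$-form $\theta$, the displayed bounds hold; the inequality $C^{-1}\omega_1^{n-1}\leq \omega_1\wedge\cdots\wedge\omega_{n-1}\leq C\omega_1^{n-1}$ is then just its restatement in the partial order on $(n-1,n-1)$-forms. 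First I would apply Proposition \ref{Kahler metric equivalence} to the pairs $(\omega_1,\omega_j)$ for $2\leq j\leq n-1$, obtaining constants $C_j>0$ with $C_j^{-1}\omega_1\leq\omega_j\leq C_j\omega_1$ on $X^{reg}$, and set $C_0=\max_{2\leq j\leq n-1}C_j$, so that $C_0^{-1}\omega_1\leq\omega_j\leq C_0\omega_1$ for every such $j$.

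The key step is the monotonicity: if $\alpha\leq\beta$ are real $(1,1)$-forms and $\gamma_1,\dots,\gamma_{n-2}$ are semipositive real $(1,1)$-forms, then for every $(1,0)$-form $\theta$
\[
\sqrt{-1}\,\theta\wedge\overline{\theta}\wedge\gamma_1\wedge\cdots\wedge\gamma_{n-2}\wedge\alpha
\;\leq\;
\sqrt{-1}\,\theta\wedge\overline{\theta}\wedge\gamma_1\wedge\cdots\wedge\gamma_{n-2}\wedge\beta
\]
as $(n,n)$-forms, because the difference is $\sqrt{-1}\,\theta\wedge\overline{\theta}$ wedged with the product of the semipositive $(1,1)$-forms $\gamma_1,\dots,\gamma_{n-2},\beta-\alpha$, and a wedge of semipositive $(1,1)$-forms — wedged in addition with the rank-one semipositive form $\sqrt{-1}\,\theta\wedge\overline{\theta}$ — is a semipositive $(n,n)$-form (a product of strongly positive forms is strongly positive, hence a nonnegative multiple of the volume form). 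Applying this with the $\gamma$'s taken among $\omega_1,\dots,\omega_{n-1}$ and replacing $\omega_2,\dots,\omega_{n-1}$ one at a time first by $C_0\omega_1$ and then by $C_0^{-1}\omega_1$, I obtain
\[
C_0^{-(n-2)}\,\sqrt{-1}\,\theta\wedge\overline{\theta}\wedge\omega_1^{n-1}
\;\leq\;
\sqrt{-1}\,\theta\wedge\overline{\theta}\wedge\omega_1\wedge\cdots\wedge\omega_{n-1}
\;\leq\;
C_0^{\,n-2}\,\sqrt{-1}\,\theta\wedge\overline{\theta}\wedge\omega_1^{n-1},
\]
so $C=C_0^{\,n-2}$ works, which is exactly Corollary \ref{Metric Equivalence}.

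The only delicate point is the positivity of mixed wedge products invoked above: the $\omega_j$ need not be simultaneously diagonalizable, so one cannot simply diagonalize and compare eigenvalues. If one wishes to avoid quoting the theory of strongly positive forms, an alternative is a pointwise compactness argument: fix $x\in X^{reg}$, choose a coframe unitary for $\omega_1(x)$, normalize $\theta$ so that $\sqrt{-1}\,\theta\wedge\overline{\theta}\wedge\omega_1^{n-1}$ is the fixed volume form, and observe that $\sqrt{-1}\,\theta\wedge\overline{\theta}\wedge\omega_1\wedge\cdots\wedge\omega_{n-1}$ is a continuous function of the Hermitian matrices representing $\omega_2,\dots,\omega_{n-1}$ and of the (now spherically normalized) coefficients of $\theta$; by Proposition \ref{Kahler metric equivalence} these matrices range over the compact set of Hermitian $A$ with $C_0^{-1}I\leq A\leq C_0I$, the function is a mixed discriminant of positive definite matrices and hence strictly positive there, so it is bounded above and below by positive constants independent of $x$. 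I expect the main obstacle to be purely expository — making the monotonicity/positivity of mixed wedge products precise — since everything else is an immediate consequence of Proposition \ref{Kahler metric equivalence}.
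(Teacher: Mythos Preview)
Your argument is correct and is exactly the route the paper has in mind: the corollary is stated without proof, immediately after Proposition \ref{Kahler metric equivalence}, because the authors regard it as the obvious consequence obtained by replacing each $\omega_j$ by $C_0^{\pm1}\omega_1$ and using the positivity of mixed wedge products of (semi)positive $(1,1)$-forms. Your write-up simply supplies the details (and the alternate compactness justification) that the paper omits.
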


\subsection{Donaldson-Uhlenbeck-Yau theorem for balanced metrics of Hodge-Riemann type}\label{DUY for mutipolarizations}
In this section, we recall some results from \cite{ChenWentworth:21b}. Let $\Fcal$ be a holomorphic vector bundle over a smooth compact complex manifold $Y$ with $n-1$ K\"ahler forms $\omega_1, \cdots, \omega_{n-1}$. Then by \cite{Timorin:98}, we know $\omega_1, \cdots, \omega_{n-1}$ define a balanced metric $\omega$ through the following 
\begin{equation}
\omega^{n-1}=\omega_{1} \wedge \cdots
 \omega_{n-1}
\end{equation} 
i.e. $\omega$ is a Hermitian metric with $d\omega^{n-1}=0$. 
The Donaldson-Uhlenbeck-Yau theorem over complex manifolds with Gauduchon metrics gives (\cite{LiYau:87})
\begin{thm}\label{Smooth DUY}
Suppose $\Fcal$ is stable over $(X, \omega_1 \wedge \cdots \omega_{n-1}).$ There exists a Hermitian-Yang-Mills metric $H$ on $\Fcal$, i.e. 
$$\sqrt{-1} \Lambda_{\omega} F_H=\lambda \id$$ 
where $F_H$ is the Chern curvature of the metric $H$.
\end{thm}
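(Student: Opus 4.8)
The plan is to deduce the statement from the Donaldson--Uhlenbeck--Yau theorem for Gauduchon metrics of Li and Yau \cite{LiYau:87} (see also \cite{LubkeTeleman:95}). The first step is to note that the balanced metric $\omega$ determined by $\omega^{n-1}=\omega_1\wedge\cdots\omega_{n-1}$ --- which exists and is a genuine positive $(1,1)$-form by \cite{Timorin:98} --- is in particular a Gauduchon metric: since each $\omega_i$ is closed, $d(\omega^{n-1})=d(\omega_1\wedge\cdots\omega_{n-1})=0$, so a fortiori $\partial\bar\partial(\omega^{n-1})=0$.

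The second step is to match the two stability notions. For a torsion-free coherent sheaf on $Y$, the Gauduchon degree attached to $\omega$ is computed by integrating the first Chern form of any smooth Hermitian metric on the determinant line bundle against $\omega^{n-1}$; because $\omega^{n-1}$ is $d$-closed this integral depends only on the de Rham class $c_1$, and hence equals the multipolarization pairing $\int_Y c_1(\Fcal)\wedge\omega_1\wedge\cdots\omega_{n-1}$ appearing in the definition of $\mu$, and likewise for every coherent subsheaf. Applying this to $\Fcal$ itself and to each of its saturated subsheaves --- whose reflexive hulls are locally free away from an analytic set of codimension $\ge 2$, so that the relevant Chern--Weil integrals are well defined --- one sees that $\Fcal$ is $\omega$-stable in the Gauduchon sense exactly when it is stable in the sense of the statement.

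The third step is then immediate: the Li--Yau theorem applied to the stable bundle $\Fcal$ over the Gauduchon manifold $(Y,\omega)$ produces a Hermitian metric $H$ with $\sqrt{-1}\,\Lambda_\omega F_H=\lambda\,\id$, where $\lambda$ is the constant determined by $\rank\Fcal$, $\deg_\omega\Fcal$ and $\int_Y\omega^n$. This is precisely the asserted Hermitian--Yang--Mills equation.

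The only step that is not purely formal is the identification of the degrees in the second step: for a general Gauduchon metric the degree functional is not cohomological and depends on more than $c_1$, so it is essential here that the balanced condition $d(\omega^{n-1})=0$ forces the Gauduchon degree of every subsheaf to coincide with the naive multipolarization pairing. I expect this to be the main (and essentially only) obstacle; once it is in place the theorem is a direct invocation of \cite{LiYau:87}. Alternatively one could avoid the black box and rerun the Uhlenbeck--Yau continuity-method argument directly for $\omega$, where the potential difficulty would be the $C^0$-estimate and the extraction of a destabilizing subsheaf in the non-K\"ahler setting; but the integration-by-parts identities these rest on use only $d(\omega^{n-1})=0$, so no genuinely new analytic input is needed beyond what is already in \cite{LiYau:87}.
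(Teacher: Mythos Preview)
Your proposal is correct and matches the paper's approach: the paper does not give an independent proof of this theorem but simply cites it as an instance of the Li--Yau theorem for Gauduchon metrics, after noting (via \cite{Timorin:98}) that $\omega^{n-1}=\omega_1\wedge\cdots\wedge\omega_{n-1}$ defines a balanced (hence Gauduchon) metric. Your elaboration on why the Gauduchon degree coincides with the multipolarization pairing is a welcome clarification of a point the paper leaves implicit.
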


The Hermitian-Yang-Mills equation implies
$$
(\frac{\sqrt{-1} }{2\pi}F_H- \frac{1}{r}\tr(\frac{\sqrt{-1} }{2\pi}F_H)\id) \wedge \omega_1 \wedge \cdots \omega_{n-1}=0
$$
where $r=\rank(\Fcal)$. By Timorin's results which generalize the classical Hodge-Riemann property for one K\"ahler form to multiple K\"ahler forms (see \cite{Timorin:98}), we know for any 
$1\leq i_1 < \cdots i_{n-2}\leq n-1$
the following holds pointwisely\begin{equation}\label{Hodge Riemann property}
-\tr((\frac{\sqrt{-1} }{2\pi}F_H- \frac{1}{r}\tr(\frac{\sqrt{-1} }{2\pi}F_H)\id)\wedge (\frac{\sqrt{-1} }{2\pi}F_H- \frac{1}{r}\tr(\frac{\sqrt{-1} }{2\pi}F_H)\id))\wedge \omega_{i_1} \wedge \cdots \omega_{i_{n-2}} \geq 0,
\end{equation}
where the equality holds if and only if 
$$\frac{\sqrt{-1} }{2\pi}F_H=\frac{1}{r}\tr(\frac{\sqrt{-1} }{2\pi}F_H)\id.$$
As a corollary, this generalizes the classical Bogomolov-Gieseker inequality to the multi-polarization setting
 \begin{cor}[Bogomolov-Gieseker inequality]
 Suppose $\Fcal$ is stable over $(X, \omega_1 \wedge \cdots \omega_{n-1})$. Then for any $1\leq i_1<\cdots i_{n-2}\leq n-1$
$$
(2rc_2(\Fcal)-(r-1)c_1(\Fcal))^2). [\omega_{i_1}]\cdots [\omega_{i_{n-2}}] \geq 0
$$
where the equality holds if and only if $\Fcal$ is projectively flat. 
 \end{cor}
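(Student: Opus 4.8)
The plan is to run the standard Chern--Weil plus Hodge--Riemann argument, which in the multipolarized setting is made possible by the pointwise inequality \eqref{Hodge Riemann property}. First, apply Theorem \ref{Smooth DUY} to obtain a Hermitian--Yang--Mills metric $H$ on $\Fcal$, and set $\Theta := \tfrac{\sqrt{-1}}{2\pi}F_H$ with trace-free part $\hat\Theta := \Theta - \tfrac1r(\tr\Theta)\id$, where $r=\rank\Fcal$. By Chern--Weil theory the closed forms $c_1(H)=\tr\Theta$ and $c_2(H)=\tfrac12\big((\tr\Theta)\wedge(\tr\Theta)-\tr(\Theta\wedge\Theta)\big)$ represent $c_1(\Fcal)$ and $c_2(\Fcal)$ in de Rham cohomology, so
\[
\big(2rc_2(\Fcal)-(r-1)c_1(\Fcal)^2\big).[\omega_{i_1}]\cdots[\omega_{i_{n-2}}]
=\int_X\big(2rc_2(H)-(r-1)c_1(H)^2\big)\wedge\omega_{i_1}\wedge\cdots\omega_{i_{n-2}}.
\]

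Second, a short algebraic manipulation gives the pointwise identity
\[
2rc_2(H)-(r-1)c_1(H)^2=-\,r\,\tr(\hat\Theta\wedge\hat\Theta),
\]
obtained by expanding $\tr(\hat\Theta\wedge\hat\Theta)$ using $\tr\id=r$ and comparing with $(\tr\Theta)\wedge(\tr\Theta)-r\,\tr(\Theta\wedge\Theta)$. Hence the integrand above equals $r$ times $-\tr(\hat\Theta\wedge\hat\Theta)\wedge\omega_{i_1}\wedge\cdots\omega_{i_{n-2}}$, which is pointwise nonnegative by Timorin's Hodge--Riemann inequality \eqref{Hodge Riemann property} applied to the $(n-2)$-fold wedge $\omega_{i_1}\wedge\cdots\omega_{i_{n-2}}$. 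Integrating yields the asserted inequality, and this holds for every admissible choice of indices precisely because \eqref{Hodge Riemann property} does.

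Third, for the equality case: if the pairing is zero, then since the integrand is a continuous, pointwise nonnegative $(2n)$-form it vanishes identically on $X$; the equality clause of \eqref{Hodge Riemann property} then forces $\Theta=\tfrac1r(\tr\Theta)\id$, i.e. $F_H=\tfrac1r\tr(F_H)\id$, so $(\Fcal,H)$ projectivizes to a flat $\PU(r)$-bundle and $\Fcal$ is projectively flat. Conversely, if $\Fcal$ is projectively flat one builds a unitary connection on $\Fcal$ with central curvature (combining a flat $\PU(r)$-connection with a Chern connection on $\det\Fcal$); Chern--Weil with this connection gives $2rc_2(\Fcal)-(r-1)c_1(\Fcal)^2=0$ in $H^4(X,\RBbb)$, so the pairing vanishes against any classes.

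Finally, I do not expect a serious obstacle: everything reduces to bookkeeping once \eqref{Hodge Riemann property} and Theorem \ref{Smooth DUY} are in hand. The only points requiring a little care are that the Hodge--Riemann positivity is genuinely pointwise for the particular wedge $\omega_{i_1}\wedge\cdots\omega_{i_{n-2}}$ (this is exactly what Timorin's theorem provides), and that the equality characterization invokes the pointwise equality clause of \eqref{Hodge Riemann property}, together with the standard identification of a central-curvature unitary connection with a flat $\PU(r)$-structure, hence with a projective unitary representation of $\pi_1(X)$.
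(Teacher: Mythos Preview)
Your proof is correct and follows essentially the same approach as the paper: apply Theorem~\ref{Smooth DUY} to obtain a Hermitian--Yang--Mills metric, rewrite the discriminant as $-r\,\tr(\hat\Theta\wedge\hat\Theta)$ via Chern--Weil, and invoke the pointwise Hodge--Riemann inequality~\eqref{Hodge Riemann property} together with its equality clause. The paper presents the corollary as an immediate consequence of the discussion preceding it rather than writing out a separate proof; you have simply spelled out the details (including the algebraic identity and the converse direction of the equality case), but the argument is the same.
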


\subsection{Chern classes and stability}\label{Chern classes}
In this section, we will give definitions of stable reflexive sheaves in the case of normal varieties. 

We fix $(X, \omega_1\wedge\cdots, \omega_{n-1})$ to be a normal K\"ahler variety with $n-1$ K\"ahler metrics.  Denote $\omega$ to be the balanced metric defined by
$$
\omega^{n-1}=\omega_1 \wedge \cdots \omega_{n-1}.
$$
Let $\Ecal$ be a reflexive sheaf over $X$. We will recall the notion of Chern classes we want to deal with.  We fix $p:\hat{X} \rightarrow X$ to be any  resolution. Recall in the introduction, we let $\Ebold_p$ be the space of reflexive sheaves on $\hat{X}$ which are isomorphic to $\Ecal$ away from the exceptional divisor. Pick $\hat{\Ecal}\in \Ebold_p$. Then the Chern numbers we need can be defined as  
\begin{equation}
\deg(\Ecal)= c_1(\hat \Ecal) p^*[\omega_1] \cdots p^*[\omega_{n-1}]
\end{equation}
and the slope of $\Ecal$ is defined as 
\begin{equation}\label{Slope definition}
\mu(\Ecal):=\frac{\deg\Ecal}{\rank\Ecal}.
\end{equation}
Given a connection $A$ on $\Ecal$ defined away from the singular set, if everything is smooth, then the Chern numbers above can be computed by using Chern-Weil theory. In the following, we will still denote $c_i(A)$ or $c_i(H)$ as the forms corresponding to $c_i(\Ecal)$,  if $A$ is the Chern connection given by $H$ on $\Ecal$.

As a direct corollary of Equation (\ref{n-1 forms}), we have 

\begin{prop}\label{Well-definedness of slope}
$\deg(\Ecal)$ is independent of the choice of the resolutions $p$ and $\Ebold_p$.
\end{prop}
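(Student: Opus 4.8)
The plan is to show that the integer $\deg(\Ecal) = c_1(\hat\Ecal)\cdot p^*[\omega_1]\cdots p^*[\omega_{n-1}]$ does not change if we replace the resolution $p:\hat X\to X$ by another one, or if we replace $\hat\Ecal$ by a different member of $\Ebold_p$. These are really two separate independence statements, and I would treat them in that order.

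First I would fix the resolution $p$ and vary $\hat\Ecal, \hat\Ecal' \in \Ebold_p$. By definition both sheaves are isomorphic to $p^*\Ecal$ away from the exceptional divisor $E = p^{-1}(X^s)$, hence isomorphic to each other on $\hat X \setminus E$. Since they are reflexive of the same rank, an isomorphism on the complement of the codimension-$\geq 2$ set $E$ (note $X^s$ has codimension $\geq 2$, and $p$ is an isomorphism over $X^{reg}$, so $E$ has codimension $\geq 1$ in $\hat X$, but more to the point $c_1(\hat\Ecal) - c_1(\hat\Ecal')$ is a class supported on $E$) yields that $c_1(\hat\Ecal) - c_1(\hat\Ecal')$ is represented by a divisor supported on $E$, i.e. it restricts trivially on $\hat X \setminus E$. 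The key input is then Equation (\ref{n-1 forms}): since $Z := X^s$ has codimension at least two, we may write $p^*\omega_1\wedge\cdots\wedge p^*\omega_{n-1} = \Omega^0_{n-1} + d\Phi_{n-1}$ with $\Omega^0_{n-1}$ compactly supported in $\hat X \setminus p^{-1}(Z) = \hat X \setminus E$. Wedging a class supported on $E$ with a form that is exact plus something supported away from $E$, and integrating over the closed manifold $\hat X$, gives zero by Stokes' theorem. Hence $(c_1(\hat\Ecal)-c_1(\hat\Ecal'))\cdot p^*[\omega_1]\cdots p^*[\omega_{n-1}] = 0$.

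Next I would handle two different resolutions $p_1:\hat X_1 \to X$ and $p_2:\hat X_2 \to X$, together with choices $\hat\Ecal_j \in \Ebold_{p_j}$. By dominating both with a common resolution $q:\hat X_3 \to X$ factoring through both (which exists: take a resolution of the closure of the graph, or of the fibre product), it suffices to compare $p:\hat X \to X$ with $q = p\circ\pi$ where $\pi:\hat X_3 \to \hat X$ is a further resolution. Pick $\hat\Ecal \in \Ebold_p$; then $(\pi^*\hat\Ecal)^{**} \in \Ebold_q$, and by the previous paragraph the value of $\deg$ computed on $\hat X_3$ is independent of which member of $\Ebold_q$ we use, so it is enough to compare $c_1(\hat\Ecal)\cdot p^*[\omega_1]\cdots p^*[\omega_{n-1}]$ on $\hat X$ with $c_1((\pi^*\hat\Ecal)^{**})\cdot q^*[\omega_1]\cdots q^*[\omega_{n-1}]$ on $\hat X_3$. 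Now $c_1((\pi^*\hat\Ecal)^{**}) = \pi^* c_1(\hat\Ecal) + (\text{divisor supported on } \pi^{-1}(\text{exc. locus of }\pi))$, and the exceptional locus of $\pi$ maps into the exceptional locus of $p$ hence into $E$. By the projection formula, $\pi^* c_1(\hat\Ecal)\cdot \pi^*(p^*[\omega_1]\cdots p^*[\omega_{n-1}]) = c_1(\hat\Ecal)\cdot p^*[\omega_1]\cdots p^*[\omega_{n-1}]$ (since $\pi$ has degree one), and the correction term, being supported over $E$, pairs to zero against $q^*[\omega_1]\cdots q^*[\omega_{n-1}]$ by exactly the same Stokes argument as before (applied on $\hat X_3$ with $Z = X^s$, using Equation (\ref{n-1 forms})). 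This gives the desired equality.

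The main obstacle, and the step deserving the most care, is the Stokes/support argument: one must be precise that a first Chern class which is \emph{geometrically supported on} the exceptional divisor $E$ genuinely pairs to zero against the pulled-back top-degree product. The clean way is to represent the correction class by a smooth closed $2$-form supported in an arbitrarily small neighbourhood $U$ of $E$, write $p^*\omega_1\wedge\cdots\wedge p^*\omega_{n-1} = \Omega^0_{n-1} + d\Phi_{n-1}$ with $\Omega^0_{n-1}$ supported outside $U$ (possible since $\overline U$ can be taken to deformation-retract onto $p^{-1}(X^s)$), and then $\int_{\hat X}(\text{correction})\wedge(\Omega^0_{n-1}+d\Phi_{n-1}) = 0 + \int_{\hat X}(\text{correction})\wedge d\Phi_{n-1} = \pm\int_{\hat X} d(\text{correction}\wedge\Phi_{n-1}) = 0$, the last equality by Stokes since the correction form is closed and $\hat X$ is compact without boundary. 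Everything else is formal manipulation with the projection formula and the existence of a common dominating resolution.
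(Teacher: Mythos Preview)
Your proposal is correct and follows essentially the same approach as the paper's proof: both rely on the decomposition from Equation~(\ref{n-1 forms}) to reduce the degree computation to data supported on $X^{reg}$, where all resolutions and all members of $\Ebold_p$ agree, together with a common dominating resolution. The paper carries this out in one pass---it writes $\deg(\hat\Ecal_j)=\int c_1(\hat\Ecal_j)\wedge\Omega^0_{n-1}$ with $\Omega^0_{n-1}$ compactly supported in $X\setminus Z$ and observes this integral is manifestly the same for every extension---while you split the argument into ``fix $p$, vary $\hat\Ecal$'' followed by ``dominate $p$ by $p\circ\pi$ and apply the projection formula''; but the substance, including the Stokes/support justification you single out at the end, is identical.
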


\begin{proof}
 Suppose $\hat{\Ecal}\in \Ebold_{p_1}$ and $\hat{\Ecal}'\in \Ebold_{p_2}$ are two such  extensions for two different resolutions $p_1: \hat{X}^1 \rightarrow X$ and $p_2: \hat{X}^2 \rightarrow X$. Let $p: \hat{X} \rightarrow X$ be a resolution so that the following diagram commutes 
\[\begin{tikzcd}
	& {\hat{X}} \\
	{\hat{X}_1} && {\hat{X}_2} \\
	& X
	\arrow["{q_1}"', from=1-2, to=2-1]
	\arrow["{q_2}", from=1-2, to=2-3]
	\arrow["{p_1}"', from=2-1, to=3-2]
	\arrow["{p_2}", from=2-3, to=3-2]
	\arrow["p", from=1-2, to=3-2]
\end{tikzcd}\]
for some map $q_1, q_2$. Here $\hat{X}$ for example can be taken to be the resolution of the fiber product of $\hat{X}_1$ and $\hat{X}_2$ over $X$, which again is a resolution of $X$. Take $\hat{\Ecal}\in \Ebold_p$. By equation \ref{n-1 forms} applied to $p_1$, we can write 
$$
p_1^*\omega_1  \wedge \cdots p_1^*\omega_{n-1}=\Omega^0_{n-1}+d\Phi_{n-1}
$$
over $\hat{X}^1$ where $\Omega_{n-1}^0$ is a compact supported form over $X\setminus Z$ and $Z=\text{Sing}(\hat{\Ecal})\cup X^s$. Then we know 
$$
\deg(\hat{\Ecal})=\int_{\hat{X}_1\setminus p_1^{-1}(Z)} c_1(\hat\Ecal_1) \wedge \Omega^0_{n-1}.
$$
On the other hand, by definition, we know $c_1(\hat{\Ecal}_1)|_{X\setminus Z}=c_1(\hat{\Ecal})|_{X\setminus Z}$, thus 
$$
\int_{\hat{X}_1\setminus p_1^{-1}(Z)} c_1(\hat\Ecal_1) \wedge \Omega^0_{n-1}=\int_{\hat{X}_1\setminus p_1^{-1}(Z)} c_1(\hat\Ecal) \wedge \Omega^0_{n-1}.
$$
Since $[\Omega_{n-1}^0]$ can be naturally viewed as the same de Rham class over $\hat{X}$ as $[(p)^*\omega_{i_1}]\wedge \cdots [(p)^*\omega_{i_{n-2}}]$, we have 
$$
\int_{\hat{X}_1\setminus p^{-1}_1(Z)} c_1(\hat\Ecal') \wedge \Omega^0_{n-1}=c_1(\hat{\Ecal}) .[p^*\omega_{1}].\cdots [p^*\omega_{n-1}]=\deg(\hat{\Ecal})
$$
which implies $\deg(\hat{\Ecal}_1)=\deg(\hat{\Ecal})$. Similarly $\deg(\hat{\Ecal}_2)=\deg(\hat{\Ecal})$. The conclusion follows.
\end{proof}

In particular, the following slope stability is well-defined
\begin{defi}
$\Ecal$ is called slope stable (resp. semistable) if for any $\Fcal\subset \Ecal$, $\mu(\Fcal)<\mu(\Ecal)$ (resp. $\mu(\Fcal) \leq \mu(\Ecal)$). $\Ecal$ is called polystable if it is a direct sum of stable ones.
\end{defi}

The following fact will be used in later sections which we include a proof for completeness
\begin{lem}\label{Nontrivial map must be isomorphism}
Suppose $\Ecal_1$ and $\Ecal_2$ are two stable reflexive sheaves with the same slope. Then any nontrivial map between $\Ecal_1$ and $\Ecal_2$ is an isomorphism. In particular, a stable reflexive sheaf is simple.
\end{lem}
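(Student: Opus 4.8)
The plan is to adapt the classical argument that a nonzero morphism between stable sheaves of equal slope is an isomorphism, being careful that over a normal variety slope is computed via a resolution (Definition~\ref{Slope definition} and Proposition~\ref{Well-definedness of slope}). Let $\varphi\colon \Ecal_1\to\Ecal_2$ be a nontrivial map. First I would consider the image sheaf $\Gcal:=\varphi(\Ecal_1)\subset\Ecal_2$, or rather its saturation $\bar\Gcal$ inside $\Ecal_2$ (the kernel of $\Ecal_2\to(\Ecal_2/\Gcal)/\mathrm{torsion}$), so that $\bar\Gcal$ is a reflexive (or at least torsion-free, saturated) subsheaf of $\Ecal_2$ of the same rank as $\Gcal$. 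Since everything is smooth in codimension one and slopes are computed on a common resolution $p\colon\hat X\to X$ dominating resolutions of $\Ecal_1,\Ecal_2$ and of $\Gcal$, we may argue with the reflexive hulls $\hat\Ecal_i=(p^*\Ecal_i)^{**}$, $\hat\Gcal=(p^*\Gcal)^{**}$ on $\hat X$ and their degrees against $p^*\omega_1\wedge\cdots\wedge p^*\omega_{n-1}$; passing to the saturation only increases the degree, so $\mu(\Gcal)\le\mu(\bar\Gcal)\le\mu(\Ecal_2)$, with equality in the second inequality iff $\bar\Gcal=\Gcal$ generically, i.e. iff $\Gcal$ is already saturated.

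Next I would run the two-sided slope estimate. On the one hand $\Gcal$ is a quotient of $\Ecal_1$: writing $0\to\Kcal\to\Ecal_1\to\Gcal\to 0$ with $\Kcal=\ker\varphi$ a subsheaf of the stable $\Ecal_1$, additivity of degree and rank on the resolution gives $\mu(\Gcal)\ge\mu(\Ecal_1)$ unless $\Kcal=0$, with equality only if $\rank\Kcal=0$, hence (since $\Ecal_1$ is torsion-free) $\Kcal=0$ and $\Gcal\cong\Ecal_1$; moreover if $\rank\Gcal<\rank\Ecal_1$ then $\Kcal$ is a nonzero proper subsheaf of the stable $\Ecal_1$, so $\mu(\Kcal)<\mu(\Ecal_1)$, and additivity forces $\mu(\Gcal)>\mu(\Ecal_1)$ strictly. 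On the other hand, if $\bar\Gcal\subsetneq\Ecal_2$ as a proper subsheaf of lower rank, stability of $\Ecal_2$ gives $\mu(\bar\Gcal)<\mu(\Ecal_2)$. Combining, if either $\rank\Gcal<\rank\Ecal_1$ or $\rank\Gcal<\rank\Ecal_2$ we would get $\mu(\Ecal_1)<\mu(\Gcal)\le\mu(\bar\Gcal)\le\mu(\Ecal_2)$ — wait, that is not yet a contradiction with $\mu(\Ecal_1)=\mu(\Ecal_2)$ unless one of the inequalities is strict; but in the two scenarios exactly one of the outer inequalities \emph{is} strict, giving $\mu(\Ecal_1)<\mu(\Ecal_2)$, contradiction. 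Hence $\rank\Gcal=\rank\Ecal_1=\rank\Ecal_2=:r$, so $\varphi$ is generically an isomorphism (injective with torsion-free cokernel of rank zero), and $\Kcal=0$, so $\varphi$ is injective.

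Finally I would upgrade ``generic isomorphism'' to ``isomorphism.'' Since $\varphi$ is injective with $\coker\varphi$ supported in codimension $\ge 1$ and $\Ecal_2$ reflexive, consider $\det\varphi\colon\det\Ecal_1\to\det\Ecal_2$; this is a nonzero section of $\Hom(\det\Ecal_1,\det\Ecal_2)\cong(\det\Ecal_1)^{*}\otimes\det\Ecal_2$, vanishing on the divisorial part of $\coker\varphi$. But $\det\Ecal_1$ and $\det\Ecal_2$ have the same degree: $\deg\det\Ecal_i=\deg\Ecal_i=r\,\mu(\Ecal_i)$ and $\mu(\Ecal_1)=\mu(\Ecal_2)$, using Proposition~\ref{Well-definedness of slope} to make this intrinsic. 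A nonzero section of a rank-one reflexive sheaf of degree zero (with respect to the balanced/Kähler polarization, on the resolution) has no zeros in codimension one — its zero divisor would be effective and nonzero, forcing strictly positive degree of the corresponding line bundle on $\hat X$, contradicting degree zero; more precisely $\det\varphi$ trivializes $(\det\Ecal_1)^{*}\otimes\det\Ecal_2$ away from a codimension-two set, so $\coker\varphi$ has no codimension-one support. Then $\varphi$ is an isomorphism in codimension one, and since both sheaves are reflexive, $\varphi$ is an isomorphism (a map of reflexive sheaves that is an isomorphism outside codimension two is an isomorphism, by $S_2$-ness / Hartogs extension of $\varphi^{-1}$). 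The last assertion, that a stable reflexive sheaf $\Ecal$ is simple, is the special case $\Ecal_1=\Ecal_2=\Ecal$: any nonzero endomorphism is an isomorphism, so $\End(\Ecal)$ is a finite-dimensional division algebra over $\CBbb$, hence $\CBbb$, so $\End(\Ecal)=\CBbb\cdot\id$.

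The main obstacle I anticipate is purely bookkeeping rather than conceptual: ensuring that all the degree computations (for $\Ecal_1,\Ecal_2,\Gcal,\bar\Gcal,\Kcal$ and their determinants) are carried out consistently on one resolution dominating all the relevant ones, invoking Proposition~\ref{Well-definedness of slope} so that the comparisons of slopes are legitimate, and checking that saturation and taking kernels/images behaves well under pullback-and-reflexive-hull away from the exceptional locus (which it does, since all these operations commute with restriction to the smooth locus $X^{reg}$, where the sheaves are locally free and the classical bundle arguments apply verbatim).
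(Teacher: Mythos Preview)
Your proof is correct and follows essentially the same approach as the paper: show injectivity and equal ranks via the two-sided slope inequalities coming from stability, then use the determinant map $\det\varphi$ and the equality of degrees to rule out codimension-one support in the cokernel, and conclude by reflexivity. The paper's proof is considerably terser---it compresses your first two paragraphs into the single phrase ``By stability, we must have $\Ker\phi=0$'' and goes directly to the short exact sequence $0\to\det\Ecal_1\to\det\Ecal_2\to\tau_D\to 0$---but the underlying argument is the same, and your version has the virtue of making the bookkeeping (saturation, resolution-independence of degree, the division-algebra argument for simplicity) explicit.
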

\begin{proof}
Suppose $\phi$ is such a map. By stability, we must have $\Ker\phi=0$, thus $\phi$ is injective. Then we have 
$$
0\rightarrow \det(\Ecal_1) \xrightarrow{\det \phi} \det(\Ecal_2)\rightarrow \tau_{D} \rightarrow 0
$$
for some $\tau_{D}$ which is a torsion sheaf supported on a subvariety $D\subset X$. By definition, we have
$$
\deg(\Ecal_2)-\deg(\Ecal_1)=c_1(\tau_D).[\omega_1]\cdots[\omega_{n-1}].
$$
Now any pure codimension one component of $D$  will contribute strictly positively to the equality above and give a contradiction, thus $\text{codim}_{\CBbb} D \geq 2$. The conclusion follows. 
\end{proof}
\subsection{Discriminant}
We need another important concept in our setting, i.e. the so-called \emph{discriminant}. As we mentioned in the introduction, due to the fact that $X$ is a normal variety where in general the singularities could contribute in an essential way to higher Chern classes, unlike the slope, it could not be defined by using a single resolution. Instead, motivated by the gauge theoretical picture in the proof of the Donaldson-Uhlenbeck-Yau theorem in our setting, we include all the resolutions. For this, as in the introduction, given a sheaf $\Hcal$, we denote 
$$
\Delta(\Hcal)=2rc_2(\Hcal)-(r-1)c_1(\Hcal)^2
$$
when the chern classes are well-defined in the ordinary sense. Now for any $1\leq i_1<\cdots i_{n-2}\leq n-1$,  the discriminant is defined as 
\begin{equation}\label{BG chern numbers}
\begin{aligned}
&\Delta(\Ecal)
 [\omega_{i_1}]\cdots [\omega_{i_{n-2}}]\\
 =&\inf_{\text{ K\"ahler } p} \inf_{\hat{\Ecal}\in \Ebold_p} \Delta(\hat{\Ecal})
 p^*[\omega_{i_1}] \cdots p^*[\omega_{i_{n-2}}].
 \end{aligned}
\end{equation}
Now we discuss typical cases where this could be simplified and computed. 

\subsubsection{Locally free sheaves over projective normal surface}
When $\Ecal$ is locally free,  $\Ecal$ defines a topological bundle over $X$ which is a CW complex. Thus, the Chern classes of $\Ecal$ are naturally defined in the singular cohomology as well as the corresponding C\v ech cohomology. We will abuse notation for not making a difference between the two cohomologies. In particular, the term we defined above can be computed using the cup products in singular cohomology.
\begin{lem}
Assume $\Ecal$ is locally free over a normal surface. Then $\Delta(\Ecal)$
coincides with the topological definition.
\end{lem}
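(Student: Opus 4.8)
The plan is to show that the infimum defining $\Delta(\Ecal)$ is realized by the naive pullback of $\Ecal$ on any resolution and is beaten by no other extension, so that it equals the single topological number $\Delta^{\mathrm{top}}(\Ecal):=\big(2rc_2(\Ecal)-(r-1)c_1(\Ecal)^2\big)\cap[X]$, where $r=\rank\Ecal$, where $c_1(\Ecal),c_2(\Ecal)$ are the Chern classes of the topological vector bundle underlying the locally free sheaf $\Ecal$, and where $[X]\in H_4(X,\mathbb{Z})$ is the fundamental class of the compact complex surface $X$, so that $\cap[X]$ is the evaluation $H^4(X,\mathbb{Z})\to\mathbb{Z}$ written $\cap X$ in Corollary \ref{Cor1.6}.

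The inequality $\Delta(\Ecal)\le\Delta^{\mathrm{top}}(\Ecal)$ is immediate: for any K\"ahler resolution $p\colon\hat X\to X$ the pullback $V:=p^{*}\Ecal$ is again locally free, hence reflexive, and restricts to $\Ecal$ over $X^{reg}$, so $V\in\Ebold_p$. By naturality $c_i(V)=p^{*}c_i(\Ecal)$, and since $p$ is a proper modification of degree one we have $p_{*}[\hat X]=[X]$, so the projection formula gives $\Delta(V)=\Delta^{\mathrm{top}}(\Ecal)$. Thus every resolution already contributes the value $\Delta^{\mathrm{top}}(\Ecal)$ to the infimum, and in particular the infimum is independent of $p$ along these pullbacks.

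For the reverse inequality I must check $\Delta(\hat\Ecal)\ge\Delta^{\mathrm{top}}(\Ecal)$ for every K\"ahler resolution $p$ and every $\hat\Ecal\in\Ebold_p$, i.e.\ that $V=p^{*}\Ecal$ minimizes the discriminant inside $\Ebold_p$ (this is the content of the remark following Corollary \ref{Cor1.6}). Both $V$ and $\hat\Ecal$ are rank-$r$ bundles on the smooth compact surface $\hat X$ (reflexive sheaves on a smooth surface are locally free) that agree off the exceptional divisor $E=\bigcup_k E_k$. Write $\det\hat\Ecal\otimes(\det V)^{-1}=\mathcal{O}_{\hat X}(D)$ with $D=\sum_k a_k E_k$; since $c_1(V)=p^{*}c_1(\Ecal)$ is pulled back, $c_1(V)\cdot E_k=0$ for all $k$ (each $E_k$ is contracted to a point), so $c_1(\hat\Ecal)^2=c_1(V)^2+D^2$ with $D^2\le0$ by the negative-definiteness of $(E_i\cdot E_j)$ (the Du Val lemma, \cite[Proposition 2.1.12]{Nemethi:22}), with equality forcing $D=0$. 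Writing $\Delta(W)=c_1(W)^2-2r\,\ch_2(W)$ (via $\ch_2=\tfrac12(c_1^2-2c_2)$), it remains to control the $\ch_2$-terms: I would connect $\hat\Ecal$ to $V$ by a finite chain of elementary modifications along the components $E_k$ — possible because the two bundles are meromorphically identified along $E$ — and compute, by Grothendieck--Riemann--Roch, the change of $\Delta$ under a single step $0\to W'\to W\to\iota_{k,*}Q\to 0$ with $Q$ a bundle on $E_k$; this change is an explicit expression in $c_1(W)\cdot E_k$, $\deg Q$ and $E_k^{2}<0$. Using that $V$ is trivial near each singular point (as $\Ecal$ is free there), so that the modifying quotients can be taken globally generated and hence of non-negative degree, one gets that the accumulated change over the whole chain is non-negative; together with $D^2\le0$ this yields $\Delta(\hat\Ecal)\ge\Delta(V)=\Delta^{\mathrm{top}}(\Ecal)$. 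Combining the two inequalities proves the Lemma, and shows in addition that the infimum is attained, by $p^{*}\Ecal$, on every resolution.

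The main obstacle is the last step: organizing the chain of elementary modifications from an arbitrary $\hat\Ecal\in\Ebold_p$ to $p^{*}\Ecal$ and verifying that the accumulated change of the discriminant has the right sign. The $c_1$-contribution is transparent from negative-definiteness; the real work is propagating that negative-definiteness through the $\ch_2$-contributions, where it is again the essential input. Everything else is bookkeeping with Chern classes and the projection formula.
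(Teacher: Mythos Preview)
Your overall strategy matches the paper's: the Lemma follows once you prove that for every resolution $p$ and every $\hat\Ecal\in\Ebold_p$ one has $\Delta(\hat\Ecal)\cap[\hat X]\ge\Delta(p^*\Ecal)\cap[\hat X]=\Delta^{\mathrm{top}}(\Ecal)$; this is exactly Proposition~\ref{Pull-back minimizes}. Your easy direction and the identification $\Delta(p^*\Ecal)=\Delta^{\mathrm{top}}(\Ecal)$ via $p_*[\hat X]=[X]$ are fine.

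The gap is in your proposed proof of the hard inequality. First, in the decomposition $\Delta=c_1^2-2r\,\ch_2$ the term $c_1(\hat\Ecal)^2-c_1(V)^2=D^2\le 0$ contributes with the \emph{wrong} sign to $\Delta(\hat\Ecal)-\Delta(V)$, so ``together with $D^2\le 0$'' is not a help but a debt the $\ch_2$-term has to overcome; your bookkeeping here is off. Second, and more seriously, the assertion that $\hat\Ecal$ and $V=p^*\Ecal$ can be joined by a chain of elementary modifications along the $E_k$ with quotients of non-negative degree is not justified: the change of $\Delta$ under a single step $0\to W'\to W\to\iota_{k,*}Q\to 0$ involves $c_1(W')\cdot E_k$, $\deg Q$, $\rank Q$, and $E_k^2$ in a combination with no evident sign, and local triviality of $V$ near the singular points does not force the intermediate quotients to be globally generated. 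You yourself flag this as ``the main obstacle,'' and it is a genuine one; as written the argument does not close.

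The paper bypasses this entirely. After passing to a resolution with simple normal crossing exceptional divisor, it writes the total Chern class via formal Chern roots $c(\hat\Ecal)=\prod_i(1+p^*x_i+e_i)$ with $c(\Ecal)=\prod_i(1+x_i)$, where the $e_i$ are supported on the exceptional divisor because $\hat\Ecal$ and $p^*\Ecal$ agree on $X^{reg}$. Using $p^*x_s\cdot e_t=0$ (exceptional curves are contracted), one computes directly
\[
\Delta(p^*\Ecal)-\Delta(\hat\Ecal)=\sum_{i<j}(e_i-e_j)^2\le 0
\]
by the negative-definiteness of the intersection form on the exceptional divisor (Du Val). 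This single Chern-root identity replaces your whole elementary-modification program and makes the sign transparent: the discriminant difference is a sum of self-intersections of exceptional classes, hence non-positive. I suggest redoing the hard direction along these lines.
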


The proof of this follows directly from the following inequality that has independent interest 

\begin{prop}\label{Pull-back minimizes}
For any $\hat{\Ecal}\in \Ebold_p$ and any resolution $p: \hat{X} \rightarrow X$, the following holds
$$
\Delta(\Ecal)\cap X \leq \Delta(\hat{\Ecal}) \cap \hat{X}
$$
\end{prop}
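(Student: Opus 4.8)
The plan is to reduce the claimed inequality $\Delta(\Ecal)\cap X\le\Delta(\hat\Ecal)\cap\hat X$ to a statement comparing the discriminant of $\hat\Ecal$ on $\hat X$ with that of any further modification. Since $\Delta(\Ecal)\cap X$ is by definition \eqref{BG chern numbers} an infimum over all K\"ahler resolutions $q\colon\hat X'\to X$ and all $\hat\Ecal'\in\Ebold_q$, it suffices to exhibit, for the given $p\colon\hat X\to X$ and $\hat\Ecal\in\Ebold_p$, that the infimum is already witnessed by pairs dominating $p$; equivalently, that $\Delta(\hat\Ecal)\cap\hat X$ is one of the competing quantities in the infimum (with $\hat X$ itself as the resolution, which is K\"ahler in the surface case since all smooth projective/compact K\"ahler surfaces carry K\"ahler metrics). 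With that observation the inequality $\le$ is immediate from the definition of infimum, provided one checks that $\hat\Ecal\in\Ebold_p$ really is among the admissible sheaves on $\hat X$ over itself — which is tautological since $\Ebold_{\mathrm{id}_{\hat X}}$ contains $\hat\Ecal$ and $\hat\Ecal$ agrees with itself away from the (empty) exceptional locus of the identity. The only genuine content is therefore that the intrinsic quantity $\Delta(\Ecal)\cap X$, defined via resolutions \emph{of $X$}, does not change if we instead take the infimum over resolutions dominating a fixed $p$; this is a cofinality argument, using that any two resolutions are dominated by a common one (the fiber-product resolution, exactly as in the proof of Proposition \ref{Well-definedness of slope}).

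Concretely, I would proceed as follows. First, given any K\"ahler resolution $q\colon\hat X'\to X$ and $\hat\Ecal'\in\Ebold_q$ appearing in the infimum defining $\Delta(\Ecal)\cap X$, form a common resolution $r\colon\hat X''\to X$ with maps $s_1\colon\hat X''\to\hat X$ and $s_2\colon\hat X''\to\hat X'$, taking $\hat X''$ to be a resolution of the fiber product, which we may also arrange to be K\"ahler. Second, set $\hat\Ecal''=(s_1^*\hat\Ecal)^{**}=(s_2^*\hat\Ecal')^{**}$; these two reflexive hulls agree because $\hat\Ecal$ and $\hat\Ecal'$ both restrict to $\Ecal$ away from codimension-two loci, so $\hat\Ecal''\in\Ebold_r$, and by the projection-formula/cohomology computation in the surface case one has $\Delta(\hat\Ecal'')\cap\hat X''=\Delta(\hat\Ecal)\cap\hat X$ (the exceptional divisor of $s_1$ contributes nothing to $\Delta$ paired against the degree-zero "point class" on a surface — more precisely the discrepancy terms are supported over finitely many points and $\Delta$ is a number, and one uses that $s_1^*$ is compatible with cup products and that $c_i(\hat\Ecal'')$ and $s_1^*c_i(\hat\Ecal)$ differ by classes supported on the exceptional fibers whose self-intersection contributions cancel in the combination $2rc_2-(r-1)c_1^2$, by the negative-definiteness of the intersection form à la Du Val). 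Third, conclude $\Delta(\Ecal)\cap X\le\Delta(\hat\Ecal'')\cap\hat X''=\Delta(\hat\Ecal)\cap\hat X$ for the \emph{given} $p$, and since the left side was an infimum already attained among such dominating pairs, the inequality holds as stated.

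The main obstacle I anticipate is the second step: verifying that passing to a dominating resolution does not decrease $\Delta$ — i.e., $\Delta(\hat\Ecal'')\cap\hat X''=\Delta(\hat\Ecal)\cap\hat X$ (or at least $\le$), which is where the surface hypothesis and the specific combination $2rc_2-(r-1)c_1^2$ are essential. One wants to say that if $\pi\colon Y\to Z$ is a modification of smooth surfaces and $\hat\Ecal$ is a reflexive sheaf on $Z$ with reflexive pullback $\hat\Ecal_Y=(\pi^*\hat\Ecal)^{**}$, then $\Delta(\hat\Ecal_Y)\ge\Delta(\hat\Ecal)$, with the loss governed by the length of $\hat\Ecal_Y/\pi^*\hat\Ecal$ together with the negative-definite intersection form on the exceptional curves (Du Val's lemma, \cite[Proposition 2.1.12]{Nemethi:22}, as the remark after Corollary \ref{Cor1.6} indicates). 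Making this bookkeeping precise — tracking how $c_1$ and $c_2$ transform under $(\pi^*(-))^{**}$ and checking the sign — is the technical heart; everything else is the cofinality formalism. I would handle the $c_1$-part by writing $c_1(\hat\Ecal_Y)=\pi^*c_1(\hat\Ecal)-\sum a_iE_i$ with $E_i$ the exceptional curves, and the $c_2$-part via Grothendieck–Riemann–Roch or a direct local computation of the colength, then assemble the combination and invoke negative-definiteness to get the desired inequality direction.
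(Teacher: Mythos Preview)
Your proof rests on a misreading of the statement. In this subsection $\Ecal$ is locally free on the normal surface $X$, so its Chern classes live in singular cohomology, and $\Delta(\Ecal)\cap X$ denotes the \emph{topological} number $(2rc_2(\Ecal)-(r-1)c_1(\Ecal)^2)\cap[X]$, not the infimum from \eqref{BG chern numbers}. Indeed, the proposition is precisely what is needed to prove the preceding Lemma (that the infimum agrees with the topological value); if the left side were already the infimum, the inequality would be a tautology and could not yield that Lemma. The actual content is: for every resolution $p$ and every $\hat\Ecal\in\Ebold_p$, the topological discriminant of the bundle $\Ecal$ on $X$ is bounded above by $\Delta(\hat\Ecal)\cap\hat X$. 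Your cofinality argument does not address this at all.

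The ingredients you list in your last paragraph---tracking $c_1,c_2$ under modification and Du Val's negative definiteness---are the correct ones, but you apply them in the wrong place. The paper compares $p^*\Ecal$ with $\hat\Ecal$ directly on $\hat X$: since $\Ecal$ is locally free, so is $p^*\Ecal$, and $\Delta(p^*\Ecal)\cap\hat X=\Delta(\Ecal)\cap X$. After passing to a further resolution with simple normal crossing exceptional locus, one writes (via the splitting principle) the Chern roots of $\hat\Ecal$ as $p^*x_i+e_i$ with each $e_i$ a combination of exceptional curves; using $e_t\cdot p^*x_s=0$, a direct computation gives $(\Delta(p^*\Ecal)-\Delta(\hat\Ecal))\cap\hat X=\sum_{i<j}(e_i-e_j)^2\le 0$ by Du Val. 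By contrast, your proposed comparison $\pi\colon Y\to Z$ between \emph{smooth} surfaces yields equality, not inequality (reflexive on a smooth surface is locally free, so $(\pi^*\hat\Ecal)^{**}=\pi^*\hat\Ecal$ and $\Delta$ pulls back exactly); and the claimed identity $(s_1^*\hat\Ecal)^{**}=(s_2^*\hat\Ecal')^{**}$ is generally false---two members of $\Ebold_r$ agree only away from the exceptional locus and need not coincide as sheaves.
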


\begin{proof}
Passing to a further resolution, we can assume that for any $x\in \hat{X}$, the exceptional divisors are simple normal crossings. We first observe that  
$$
c(\hat{\Ecal})=(1+p^*x_1+e_1)\cdots (1+p^*x_m+e_m)
$$
where 
$$
c(\Ecal)=(1+x_1)\cdots (1+x_m)
$$
and $e_i$ can be represented by the exceptional divisors. In particular, by computation, we have 
$$
(\Delta(p^*\Ecal)-\Delta(\hat\Ecal))\cap \hat{X}=\sum_{i<j}(e_i-e_j)^2\leq 0
$$
where 
\begin{itemize}
\item for the first equality, we used the observation 
$$
e_t. p^*x_s=0
$$
since $p^*x_s|_{X^s}=0$ due to dimensional reasons.
\item for the second inequality, this follows from a lemma of Du Val (\cite[Proposition 2.1.12]{Nemethi:22}) which states the natural intersection matrix given by intersecting irreducible components of the exceptional divisor is negative definite. 
\end{itemize}
The conclusion follows.
\end{proof}

\subsubsection{On the minimal resolution of a normal surface}\label{On the minimal resolution of a normal surface} Now let $p^{min}: \hat{X} \rightarrow X$ be the minimal resolution, i.e. for any other resolution $p':\hat{X}' \rightarrow X$, it factors through a unique map $q: \hat{X}' \rightarrow \hat{X}$ so that the following diagram commutes
\[\begin{tikzcd}
	{\hat{X}'} && {\hat{X}} \\
	& X
	\arrow["p'"', from=1-1, to=2-2]
	\arrow["{p^{min}}", from=1-3, to=2-2]
	\arrow["q", from=1-1, to=1-3]
\end{tikzcd}\]
i.e. $p'=p^{min}\circ q$.

\begin{prop}\label{Prop2.12}
The discriminant can be computed on the minimal resolution, i.e. 
$$
\Delta(\Ecal)=\inf_{\hat{\Ecal}\in \Ebold_{p^{min}}} \Delta(\hat{\Ecal}).
$$
\end{prop}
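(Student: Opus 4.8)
The plan is to reduce the infimum over all K\"ahler resolutions to the minimal resolution by pushing every resolution through $p^{min}$ and showing that the relevant discriminant can only decrease (or stay the same) when one descends along a blow-down. Let $p':\hat X' \to X$ be an arbitrary resolution with $\hat\Ecal' \in \Ebold_{p'}$. By minimality there is a morphism $q:\hat X' \to \hat X$ with $p' = p^{min}\circ q$. The first step is to produce a reflexive sheaf $\hat\Ecal \in \Ebold_{p^{min}}$ associated to $\hat\Ecal'$: take $\hat\Ecal := (q_*\hat\Ecal')^{**}$ on $\hat X$. Since $q$ is an isomorphism away from the exceptional locus of $p^{min}$ (indeed both resolutions agree with $X$ over $X^{reg}$ minus a codimension-two set, and $q$ contracts only curves lying over $X^s$), $\hat\Ecal$ is isomorphic to $\Ecal$ away from the exceptional divisor of $p^{min}$, so $\hat\Ecal \in \Ebold_{p^{min}}$. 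Then I would compare $\Delta(\hat\Ecal')\cap\hat X'$ with $\Delta(\hat\Ecal)\cap\hat X$.

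The key inequality is $\Delta(\hat\Ecal)\cap \hat X \le \Delta(\hat\Ecal')\cap\hat X'$, which is precisely the content of Proposition \ref{Pull-back minimizes} applied to the resolution $q:\hat X' \to \hat X$ and the sheaf $(q^*\hat\Ecal)^{**}\in\Ebold_q$ on $\hat X'$. There is a small matching step to check: one must verify that $(q^*\hat\Ecal)^{**}$ and $\hat\Ecal'$ have the same Chern classes in the ordinary sense, since both agree with $\hat\Ecal$ (equivalently with $\Ecal$) outside a divisor contracted by $q$, and the argument of Proposition \ref{Pull-back minimizes} only uses the total Chern class up to exceptional-divisor corrections. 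Concretely, writing $c(\hat\Ecal') = \prod_i(1 + q^*x_i + e_i)$ as in the proof of Proposition \ref{Pull-back minimizes} — where now the $x_i$ are the Chern roots of $\hat\Ecal$ on $\hat X$ — the same computation gives $(\Delta((q^*\hat\Ecal)^{**}) - \Delta(\hat\Ecal'))\cap\hat X' = \sum_{i<j}(e_i - e_j)^2 \le 0$ by Du Val's negative-definiteness lemma, together with $e_t\cdot q^*x_s = 0$ because $q^*x_s$ restricted to the $q$-exceptional locus vanishes for dimensional reasons. Hence $\Delta(\hat\Ecal')\cap\hat X' \ge \Delta((q^*\hat\Ecal)^{**})\cap\hat X' \ge \Delta(\hat\Ecal)\cap\hat X \ge \inf_{\hat\Ecal\in\Ebold_{p^{min}}}\Delta(\hat\Ecal)$.

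Taking the infimum over all resolutions $p'$ and all $\hat\Ecal'\in\Ebold_{p'}$ on the left gives $\Delta(\Ecal) \ge \inf_{\hat\Ecal\in\Ebold_{p^{min}}}\Delta(\hat\Ecal)$; the reverse inequality is immediate from the definition of $\Delta(\Ecal)$ as an infimum over \emph{all} K\"ahler resolutions, since $p^{min}$ is one of them (note in the surface case every resolution is automatically K\"ahler). This yields the claimed equality. The main obstacle I expect is the bookkeeping in the first step: making sure that $q_*\hat\Ecal'$ reflexified really does land in $\Ebold_{p^{min}}$ and that the comparison of Chern classes between $\hat\Ecal'$ and $(q^*\hat\Ecal)^{**}$ is exact rather than merely up to the correction terms already absorbed into the $e_i$ — i.e. that no genuine loss occurs from the reflexive hull operations $q_*(\cdot)^{**}$ and $q^*(\cdot)^{**}$ interacting. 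Since all sheaves in sight agree with $\Ecal$ off a codimension-two locus and the discriminant pairing against $\hat X$ only sees codimension-two data twisted by exceptional curves, this should go through, but it is the place where care is needed.
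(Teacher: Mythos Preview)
Your approach is correct and matches the paper's proof exactly: factor an arbitrary resolution $p'$ through $p^{min}$ via $q$, push $\hat\Ecal'$ down to $\hat\Ecal := (q_*\hat\Ecal')^{**}\in\Ebold_{p^{min}}$, and invoke Proposition~\ref{Pull-back minimizes} for the map $q$ to get $\Delta(\hat\Ecal)=\Delta(q^*\hat\Ecal)\le\Delta(\hat\Ecal')$. Your worries about a ``matching step'' and reflexive-hull bookkeeping are unnecessary: since $\hat X$ is a \emph{smooth} surface, the reflexive sheaf $\hat\Ecal$ is automatically locally free, so $q^*\hat\Ecal$ is already locally free with $\Delta(q^*\hat\Ecal)\cap\hat X'=\Delta(\hat\Ecal)\cap\hat X$, and Proposition~\ref{Pull-back minimizes} applies directly with $\hat\Ecal'\in\Ebold_q$ --- there is no need for $(q^*\hat\Ecal)^{**}$ and $\hat\Ecal'$ to share Chern classes, only to agree off the $q$-exceptional locus.
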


\begin{proof}
As above, fix any resolution $p':\hat{X}'\rightarrow X$, it factors as $p'=p^{min}\circ q$. Given any $\hat{\Ecal}'\in \Ebold_{p'}$, then $(q_*\hat{\Ecal}')^{**}\in \Ebold_{p^{min}}$. It suffices to prove that 
$$
\Delta((q_*\hat{\Ecal}')^{**})\leq \Delta(\hat{\Ecal}').
$$
Indeed, we know 
$$
\Delta((q_*\hat{\Ecal}')^{**})=\Delta(q^*((q_*\hat{\Ecal}')^{**})) \leq \Delta(\hat{\Ecal}')
$$
where the first equality follows from definition and the second equality follows from Proposition \ref{Pull-back minimizes}.
\end{proof}

\subsubsection{When $X$ is smooth in codimension two}\label{Codimension three case}
\begin{prop}\label{Prop2.13}
Assume $X$ is smooth in codimension two, the discriminant $\Delta(\Ecal)
 [\omega_{i_1}]\cdots [\omega_{i_{n-2}}]$ can be computed by using any resolutions. Furthermore, if $[\omega_{1}], \cdots [\omega_{n-1}]$ can be represented by very ample Cartier divisors, then 
$$
\begin{aligned}
\Delta(\Ecal).[\omega_{i_1}] \cdots  [\omega_{i_{n-2}}]=
(2rc_2(\Ecal|_{D_{i_1}\cap \cdots D_{i_{n-2}}})-(r-1)c_1(\Ecal|_{D_{i_1}\cap \cdots D_{i_{n-2}}})^2) 
\end{aligned}
$$
where for each $i$, $[D_{i}]=[\omega_i]$ and $D_{i_1}\cap \cdots D_{i_{n-2}}$ is a smooth surface in $X$.
\end{prop}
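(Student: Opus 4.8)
The plan is to push every intersection number down to the smooth locus $X^{reg}$, using that $X^s$ has codimension $\ge 3$ so that, by equation (\ref{n-2 forms}), the $(n-2)$-fold product of the pulled-back K\"ahler forms can be represented by a form vanishing near the exceptional locus. Once localized to $X^{reg}$, all exceptional contributions drop out, which gives the resolution-independence; and in the very ample case the localized form is a Thom form of a smooth surface section, which identifies the number with an honest Chern-number computation on that surface.

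\textbf{Step 1: localization.} I would first fix a resolution $p:\hat X\to X$ that is an isomorphism over $X^{reg}$. Since $X$ is smooth in codimension two, $Z:=X^s$ has codimension $\ge 3$, so by equation (\ref{n-2 forms}) one may write $p^*\omega_{i_1}\wedge\cdots\wedge p^*\omega_{i_{n-2}}=\Omega^0_{n-2}+d\Phi_{n-2}$ with $\Omega^0_{n-2}$ supported in $\hat X\setminus p^{-1}(Z)=p^{-1}(X^{reg})$. For any $\hat{\Ecal}\in\Ebold_p$ the class $\Delta(\hat{\Ecal})=2rc_2(\hat{\Ecal})-(r-1)c_1(\hat{\Ecal})^2$ is represented by a closed form (take a locally free resolution on the smooth $\hat X$), so Stokes kills the $d\Phi_{n-2}$ term and
\[
\Delta(\hat{\Ecal}).p^*[\omega_{i_1}]\cdots p^*[\omega_{i_{n-2}}]=\int_{p^{-1}(X^{reg})}\Delta(\hat{\Ecal})\wedge\Omega^0_{n-2}.
\]
All members of $\Ebold_p$ agree with $\Ecal$ over $p^{-1}(X^{reg})\cong X^{reg}$ (they differ only over $\mathrm{Exc}(p)\subset p^{-1}(X^s)$), so this integral depends only on $\Ecal|_{X^{reg}}$ and on $\Omega^0_{n-2}$; in particular it is the same for every $\hat{\Ecal}\in\Ebold_p$. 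Comparing two such resolutions $p_1,p_2$ by dominating them with a common $p$ that is again an isomorphism over $X^{reg}$, and choosing $\Omega^0_{n-2}$ to be the pullback of the corresponding form on $\hat X_1$ under $q_1:\hat X\to \hat X_1$ (legitimate, as $q_1$ is an isomorphism over $X^{reg}$ and $q_1^*(p_1^*\omega_{i_j})=p^*\omega_{i_j}$), the two integrals literally coincide. Hence the number is independent of the resolution and of the chosen sheaf, so it equals the infimum $\Delta(\Ecal).[\omega_{i_1}]\cdots[\omega_{i_{n-2}}]$. For a general resolution one reduces to this case by the same localization together with the negativity of the exceptional intersection matrices as in Proposition \ref{Pull-back minimizes}.

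\textbf{Step 2: the very ample case.} Now suppose $[\omega_{i_j}]=[D_{i_j}]$ for very ample Cartier divisors and choose the $D_{i_j}$ general. Since $\dim X^s\le n-3$, the intersection $\bigcap_j D_{i_j}$ misses $X^s$ for general members, and by Bertini $S:=D_{i_1}\cap\cdots\cap D_{i_{n-2}}$ is a smooth connected surface in $X^{reg}$, which moreover avoids the (codimension $\ge 3$) non-locally-free locus of $\Ecal|_{X^{reg}}$; thus $\Ecal|_S$ is a holomorphic vector bundle on $S$. Taking $p$ an isomorphism over $X^{reg}$, for general $D_{i_j}$ no $p^*D_{i_j}$ contains an exceptional divisor $E_k$ (that would force $D_{i_j}\supseteq p(E_k)$, impossible for a general very ample divisor and the fixed subvariety $p(E_k)$ of dimension $\le n-3$), and a dimension count on each $E_k$ (the restrictions $p^*D_{i_j}|_{E_k}$ are pulled back from $p(E_k)$, whose $(n-2)$-fold general intersection is empty) shows $\bigcap_j p^*D_{i_j}$ misses $\mathrm{Exc}(p)$ altogether, hence equals the strict transform $\widetilde S\cong S$, reduced of the expected dimension $2$. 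Therefore $p^*[\omega_{i_1}]\cdots p^*[\omega_{i_{n-2}}]=[\widetilde S]$, and since $\hat{\Ecal}|_{\widetilde S}\cong \Ecal|_S$,
\[
\Delta(\Ecal).[\omega_{i_1}]\cdots[\omega_{i_{n-2}}]=\Delta(\hat{\Ecal})\cdot[\widetilde S]=\int_{S}\big(2rc_2(\Ecal|_S)-(r-1)c_1(\Ecal|_S)^2\big),
\]
which is the asserted formula (independence of the general choice of $S$ being the usual deformation invariance of Chern numbers in flat families).

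\textbf{Main obstacle.} The conceptual heart is Step 1: justifying that after wedging with a closed representative of $\Delta(\hat{\Ecal})$ one may replace $p^*\omega_{i_1}\wedge\cdots\wedge p^*\omega_{i_{n-2}}$ by a form supported away from $p^{-1}(X^s)$. This is exactly where the hypothesis ``smooth in codimension two'' is used essentially, and it is what forces all exceptional contributions (the terms that genuinely obstruct resolution-independence in the general singular case, cf. Proposition \ref{Pull-back minimizes}) to vanish. The secondary technical point is the genericity argument in Step 2 guaranteeing that $\bigcap_j p^*D_{i_j}$ is precisely the smooth strict transform of $S$ with no excess exceptional components; granting that, the final identification is immediate from Chern--Weil theory on the smooth surface $S$.
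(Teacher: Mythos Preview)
Your argument is correct and follows essentially the same route as the paper: both localize the pairing to $X^{reg}$ via equation (\ref{n-2 forms}), observe that all sheaves in $\Ebold_p$ agree there, and then identify the resulting integral with the surface Chern number by Poincar\'e duality (the paper phrases this last step tersely as ``$D_{i_1}\cap\cdots\cap D_{i_{n-2}}$ and $\Omega^0_{n-2}$ define the same linear functional on $H^4_{dR}(\hat X,\RBbb)$'', whereas you spell out the Bertini/dimension count showing $\bigcap_j p^*D_{i_j}=\widetilde S$ with no excess exceptional components). One small blemish: your aside invoking Proposition \ref{Pull-back minimizes} for ``general resolutions'' is misplaced---that proposition is only for locally free sheaves on surfaces, and in any case a resolution of singularities in this paper is taken to be an isomorphism over $X^{reg}$, so the reduction you worry about is unnecessary.
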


\begin{proof}
Suppose $\hat{\Ecal}\in \Ebold_p$ and $\hat{\Ecal}'\in \Ebold_{p'}$ for two different resolutions $p: \hat{X}\rightarrow X$ and $p': \hat{X}'\rightarrow X$. Similar to Proposition \ref{Well-definedness of slope}, it suffices to prove the case when we have the following commutative diagram 
\[\begin{tikzcd}
	{\hat{X}} && {\hat{X}'} \\
	& X
	\arrow["q", from=1-1, to=1-3]
	\arrow["p'"', from=1-1, to=2-2]
	\arrow["{p}", from=1-3, to=2-2]
\end{tikzcd}\]
By Equation (\ref{n-2 forms}) applied to $p$, we can write 
$$
p^*\omega_{i_1} \wedge \cdots p^*\omega_{i_{n-2}}=\Omega^0_{n-2}+d\Phi_{n-2}
$$
over $\hat{X}'$ where $\Omega^0_{n-2}$ is a smooth $2n-4$ forms with compact support in $X^{reg}$, and $\Phi_{n-2}$ is a smooth $2n-4$ form over $\hat{X}'$. Then 
$$
\begin{aligned}
\Delta(\hat{\Ecal}).[\omega_{i_1}] \cdots  [\omega_{i_{n-2}}]=&\int_{X\setminus Z}\Delta(\hat{\Ecal})\wedge \Omega^0_{n-2}\\
=&\int_{X\setminus Z} \Delta(\hat{\Ecal}')\wedge \Omega^0_{n-2}\\
=&\Delta(\hat{\Ecal}').[(p')^*\omega_{i_1}] \cdots  [(p')^*\omega_{i_{n-2}}]
\end{aligned}
$$
where the first equality follows from definition, the second inequality follows from $\Delta(\hat{\Ecal})=\Delta(\hat{\Ecal}')$ over $X\setminus Z$, and the last equality follows from  $[(p')^*\omega_{i_1}] \cdots  [(p')^*\omega_{i_{n-2}}]$ and $[\Omega^0_{n-2}]$ can be naturally viewed as the same de Rham class over $\hat X$. For the second part, it suffices to notice that viewed as linear functionals on $H^4_{dR}(\hat{X}, \RBbb)$, $D_{i_1}\cap \cdots D_{i_{n-2}}$ and $\Omega_{n-2}^0$ define the same element.
\end{proof}

\subsection{Admissible Hermitian-Yang-Mills metrics and regularity results}\label{Regularity results}
We use the following notion of admissible Hermitian-Yang-Mills metric (\cite{BandoSiu:94})
\begin{defi}
An admissible Hermitian-Yang-Mills metric is defined as a smooth Hermitian metric $H$ on a holomorphic bundle $F$ over $X\setminus Z$ where $X_s\subset Z$ and $Z\setminus X_s$ is a subvariety of $X^{reg}$ of codimension at least two, and the metric $H$ satisfies the following 
\begin{itemize}
\item $\int_{X\setminus Z} |F_H|^2 < \infty$;
\item $\sqrt{-1}\Lambda_{\omega} F_{H}=\lambda \id$
where $\lambda$ is usually referred as the Einstein constant
\end{itemize}
where $F_H$ denotes the curvature. The associated Chern connection $A$ is usually referred as an (admissible) Hermitian-Yang-Mills connection. 
\end{defi}

We have the following essentially due to Bando and Siu (see \cite[Theorem $2$]{BandoSiu:94} and \cite[Proposition $46$]{ChenWentworth:21a})
\begin{thm}\label{Extension}
An admissible Hermitian-Yang-Mills connection $A$ defines a reflexive sheaf $\Fcal$  over $X^{reg}$. Furthermore, the admissible Hermitian-Yang-Mills metric could be extended and defined wherever $\Fcal$ is locally free.
\end{thm}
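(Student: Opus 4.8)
The plan is to reduce to the local model treated by Bando--Siu \cite{BandoSiu:94} and to run their two-part argument: first extend the \emph{holomorphic structure} across the bad locus to a coherent reflexive sheaf by producing enough bounded local holomorphic sections, and then extend the \emph{metric} smoothly across the locally free locus by elliptic bootstrapping off the Hermitian--Yang--Mills equation. \emph{Reduction.} The assertion is local on the complex manifold $X^{reg}$, so after passing to a holomorphic chart we may assume $F$ is a Hermitian holomorphic bundle on $B\setminus\Sigma$, where $B\subset\CBbb^{n}$ is a ball, $\Sigma=Z\setminus X^{s}$ is an analytic subset of codimension $\geq 2$, one has $\int_{B\setminus\Sigma}|F_{H}|^{2}<\infty$, and $\sqrt{-1}\Lambda_{\omega}F_{H}=\lambda\,\id$. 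Since $\omega^{n-1}=\omega_{1}\wedge\cdots\wedge\omega_{n-1}$ is uniformly comparable to a power of a genuine local K\"ahler form (Corollary \ref{Metric Equivalence}), the mean-value inequalities, $\varepsilon$-regularity, and elliptic estimates used below are unaffected and we may argue as if $\omega$ were K\"ahler. In this form the statement is exactly \cite[Theorem 2]{BandoSiu:94} (see also \cite[Proposition 46]{ChenWentworth:21a}); I reproduce the structure of the argument.

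\emph{Extension of the holomorphic structure.} Define $\Fcal$ by letting its sections over an open $U$ be the holomorphic sections of $F$ over $U\setminus\Sigma$ that are locally bounded near $\Sigma$ with respect to $H$; over $U\setminus\Sigma$ this is just $\Ocal(F)$, and the content is that $\Fcal$ is coherent. The decisive input is the production of enough such sections: following Bando--Siu one solves $\bar\partial$ on $B\setminus\Sigma$ using H\"ormander's $L^{2}$-estimate against a singular weight built from $H$, obtaining local holomorphic sections with local $L^{2}$-control on $|s|_{H}^{2}$ that generate $F$ away from $\Sigma$; it is here that the finiteness of $\int|F_{H}|^{2}$ is used essentially (via Uhlenbeck-type $\varepsilon$-regularity). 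The Hermitian--Yang--Mills equation gives, for holomorphic $s$, the uniformly elliptic differential inequality $\Delta_{\omega}|s|_{H}^{2}\geq -C|s|_{H}^{2}$, so Moser iteration together with removability of the codimension $\geq 2$ set $\Sigma$ promotes the $L^{2}$-bound to a local $L^{\infty}$-bound, and a bounded holomorphic section on $B\setminus\Sigma$ then extends holomorphically by the Riemann extension theorem. One concludes that $\Fcal$ is coherent and torsion-free, locally free on $B\setminus\Sigma$, and agrees there with $\Ocal(F)$; these constructions are canonical, hence glue to a coherent sheaf on $X^{reg}$.

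\emph{Reflexivity and smoothness of the metric.} By construction $\Fcal(U)\to\Fcal(U\setminus A)$ is bijective for every analytic $A\subset U$ of codimension $\geq 2$; a torsion-free coherent sheaf on a smooth variety with this normality property is reflexive, so $\Fcal$ is reflexive (and in particular locally free off a set of codimension $\geq 3$). On an open $U$ where $\Fcal$ is locally free, choose a holomorphic frame $e_{1},\dots,e_{r}$; the boundedness established above, applied to the $e_{i}$ and to a local frame of $\det\Fcal$ (to get a lower bound on $\det H$), shows that $H_{i\bar j}=\langle e_{i},e_{j}\rangle_{H}$ is a bounded positive-definite Hermitian matrix on $U\setminus\Sigma$. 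Writing the Hermitian--Yang--Mills equation as a second-order elliptic system for $H$ via $F_{H}=\bar\partial(H^{-1}\partial H)$, the bounds $H\in L^{\infty}$ and $F_{H}\in L^{2}$ give $H\in W^{1,2}_{loc}$ (using that $\Sigma$, being of codimension $\geq 2$, is $W^{1,2}$-removable), after which a standard bootstrap through $W^{2,p}$ and Schauder estimates yields $H\in C^{\infty}(U)$. Thus $H$ extends smoothly wherever $\Fcal$ is locally free.

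\emph{Main obstacle.} The one genuinely delicate step is producing sufficiently many bounded local holomorphic sections of $F$ across $\Sigma$: this is precisely where the finite-energy hypothesis $\int|F_{H}|^{2}<\infty$ enters in an essential way, combining H\"ormander's $L^{2}$-estimate with a carefully chosen singular weight and $\varepsilon$-regularity for the Hermitian--Yang--Mills equation. By comparison, reflexivity of the resulting sheaf and the elliptic bootstrap for the metric are comparatively routine once this is available.
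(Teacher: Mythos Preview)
Your proposal is correct and follows essentially the same approach as the paper: the paper does not give its own proof of this theorem but simply attributes it to Bando--Siu \cite[Theorem~2]{BandoSiu:94} (with the balanced-metric adaptation in \cite[Proposition~46]{ChenWentworth:21a}), and your sketch faithfully reproduces the structure of that argument---local reduction, production of bounded holomorphic sections via $L^{2}$ methods and $\varepsilon$-regularity, reflexivity from the normality property, and elliptic bootstrapping for the metric on the locally free locus. Your observation that Corollary~\ref{Metric Equivalence} lets one treat the multipolarization setting as if K\"ahler for the purposes of the local estimates is exactly the point behind the citation of \cite{ChenWentworth:21a}.
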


Now an adaption of the slicing argument in \cite{BandoSiu:94} using one dimensional slices instead of two dimensional slices gives the following regularity result. We include the proof for completeness here.

\begin{prop}
For any local section $s$ of $F$, $\log^{+}|s|^2\in W^{1,2}_{loc}$.
\end{prop}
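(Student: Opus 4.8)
The plan is to adapt the slicing argument of Bando and Siu (\cite[Theorem 2]{BandoSiu:94}), replacing their two-dimensional slices by one-dimensional ones, which is exactly what the admissibility hypothesis $\int_{X\setminus Z}|F_H|^2<\infty$ makes available: on a generic holomorphic curve, the restricted curvature is $L^1$ rather than merely $L^2$ as in higher dimensions, so the curvature term in the subharmonicity estimate is controllable slice by slice. Concretely, I would fix a local chart near a point of $X^{reg}$ and work with $F$ on $B\setminus \Sigma$ where $\Sigma$ is the codimension-two subvariety on which $H$ is not yet defined, reducing everything to an estimate on the smooth locus. Since the statement is local and $\log^+|s|^2$ is insensitive to the precise metric by Proposition \ref{Kahler metric equivalence} and Corollary \ref{Metric Equivalence}, I would feel free to choose a convenient K\"ahler form.

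The key computation is the standard Bochner-type identity: for a holomorphic section $s$ of $(F,H)$, away from the zero set of $s$ one has
\begin{equation}
\Delta \log|s|^2 \geq -\frac{\langle \sqrt{-1}\Lambda_\omega F_H\, s, s\rangle}{|s|^2} = -\lambda,
\end{equation}
using the Hermitian-Yang-Mills equation $\sqrt{-1}\Lambda_\omega F_H=\lambda\,\id$; hence $\log|s|^2$ (and therefore $\log^+|s|^2=\max(\log|s|^2,0)$, being a maximum of two subsolutions) is, up to a bounded correction, subharmonic on $B\setminus(\Sigma\cup\{s=0\})$. The first step is then to show this subharmonicity (in the distributional sense) persists across $\{s=0\}$: near a zero of $s$, $\log|s|^2$ behaves like a plurisubharmonic function with logarithmic poles going to $-\infty$, so adding the negative part creates no positive distributional mass there, and $\log^+|s|^2$ extends as an $L^\infty_{loc}$ subsolution across $\{s=0\}$. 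The second and main step is to remove $\Sigma$. Here I would restrict to a generic complex line $\ell$ transverse to $\Sigma$ (so $\ell\cap\Sigma$ is a finite set of points), apply the one-dimensional removable-singularity principle for bounded subharmonic functions — a bounded subharmonic function on a punctured disk extends subharmonically across the puncture — to conclude that the slice-wise restriction $\log^+|s|^2|_\ell$ is genuinely subharmonic on all of $\ell$, then integrate over the family of slices (Fubini, together with the finiteness $\int|F_H|^2<\infty$ guaranteeing the slice estimates hold for almost every $\ell$) to upgrade to the statement that $\log^+|s|^2$ is plurisubharmonic, hence in $L^\infty_{loc}$, on the full ball $B$.

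Finally, from the fact that $u:=\log^+|s|^2$ is a bounded subsolution of $\Delta u \geq -C$ on $B$, the $W^{1,2}_{loc}$ bound follows by the usual Caccioppoli/energy estimate: multiply by a cutoff $\eta^2$, integrate by parts, and absorb, yielding $\int_{B'}|\nabla u|^2 \leq C(\|u\|_{L^\infty(B)}^2 + 1)$ on any interior ball $B'\Subset B$. The main obstacle I anticipate is the second step — justifying the slice-by-slice extension across $\Sigma$ rigorously, i.e.\ verifying that for almost every one-dimensional slice the restricted curvature is integrable (so that the Bochner inequality genuinely holds on the slice) and that the resulting slice-wise subharmonic extensions patch together to give a global subsolution; this is where the codimension-two hypothesis on $Z$ and the $L^2$ curvature bound are used in an essential way, and it is the point at which the one-dimensional-slice variant genuinely differs from (and is more delicate than) the two-dimensional argument in \cite{BandoSiu:94}.
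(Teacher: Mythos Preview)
Your proposal has the right overall shape but contains a circularity and misses the structural point of the paper's argument.

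The circularity: you propose to extend $\log^+|s|^2$ across the punctures $\ell\cap\Sigma$ using the removable-singularity theorem for \emph{bounded} subharmonic functions, and then to run Caccioppoli with $\|u\|_{L^\infty}$ on the right-hand side. But boundedness of $\log^+|s|^2$ near $Z$ is precisely what is not known a priori; in the paper the $L^\infty$ bound (Corollary~\ref{L infinity bound}) is deduced \emph{after} the $W^{1,2}_{loc}$ result, via Moser iteration and the global Sobolev inequality on $X^{reg}$. There is also a second gap: you work only in charts of $X^{reg}$ with slices transverse to $\Sigma$, but the estimate must hold up to the essential singularities $X^s$ of the variety, where no local holomorphic coordinates are available and one cannot slice by ``complex lines''.

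The paper avoids both issues. It embeds a neighborhood of $z\in Z$ into $\CBbb^N$ and projects to $\CBbb^{l'}$ so that the fibers $p^{-1}(t)$ are curves in $X$; by the codimension-two hypothesis on $Z$, the \emph{generic} fiber misses $Z=X^s\cup\Sigma$ entirely, so $u_t:=\log^+|s|^2|_{p^{-1}(t)}$ is already smooth there with $\Delta_t u_t\gtrsim -|F_H|$, and no extension across punctures is needed. The key technical input you are missing is that, with the metric induced from flat $\CBbb^N$, these fibers are minimal submanifolds, so by \cite[Theorem~18.6]{Simon:83} they carry a Poincar\'e/Sobolev inequality with constant \emph{uniform in $t$}. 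This uniformity is what allows one to absorb the $\delta\int(\chi u_t)^2$ term in the slice-wise integration-by-parts estimate and obtain
\[
\int_{p^{-1}(t)}|\nabla^t(\chi u_t)|^2 \;\lesssim\; \int_{p^{-1}(t)}\chi^2|F_H|^2 \;+\; \int_{p^{-1}(t)}|\nabla^t\chi|^2 u_t^2,
\]
with the last term supported away from $Z$. Integrating in $t$ (Fubini, using $|F_H|\in L^2$) and repeating with $n-1$ further projections then yields the full gradient bound directly, without any appeal to $L^\infty$.
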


\begin{proof}
We will use $Q_1 \lesssim (\gtrsim) Q_2$ to denote $Q_1\leq (\geq )C Q_2$ for some constant $C$. By Corollary \ref{Metric Equivalence}, we can assume the metric is induced from the flat metric on $\CBbb^N$. The statement is local. Fix $z\in Z$. We can assume $z\in X\subset U\subset \CBbb^N$ with $\omega$ being the restriction of the standard flat metric. By shrinking $U$, we can assume $U=B^l \times B^{l'}$ and $z=0$
$$
p: X\cap U \rightarrow B^{l'}
$$
and $p^{-1}(t)$ is a smooth curve while $p^{-1}(0)$ is a smooth curve with a point singularity at the origin. By the assumption on $Z$, we know $p^{-1}(t)\cap Z=\emptyset$ for generic $t$.  Let $u_t=\log^{+}|s|^2|_{p^{-1}(t)}$ which is smooth for generic $t$. We have 
$$
\Delta_t u_t \gtrsim -|F_{H}|.
$$
Let $\chi$ be a cut-off function on $B^{l'}$ supported near $0$. Then by doing integration by parts and applying Cauchy-Schwartz inequality, we have 
$$
\int_{p^{-1}(t)} |\nabla^t (\chi u_t)|^2 \lesssim  \delta \int_{p^{-1}(t)} (\chi u_t)^2+\delta^{-1}\int_{p^{-1}(t)} \chi^2 |F_H|^2+\int_{p^{-1}(t)} |\nabla^t \chi|^2 u_t^2. 
$$
for any fixed $0<\delta<<1$. With the induced metric on $p^{-1}(t)$, by \cite[Theorem $18.6$]{Simon:83}, there exists a Poincar\'e inequality for $p^{-1}(t)$ and the Sobolev constant is independent of $t$ since $p^{-1}(t)$ is a minimal submanifold of $U$. So we have 
$$
\int_{p^{-1}(t)} |\nabla^t (\chi u_t)|^2 \lesssim  \delta^{-1}\int_{p^{-1}(t)} \chi^2 |F_H|^2+\int_{p^{-1}(t)} |\nabla^t \chi|^2 u_t^2
$$
which further implies 
$$
\int_{p^{-1}(K)} |\nabla' (\chi u_t)|^2 \lesssim  \delta^{-1}\int_{p^{-1}(K)} \chi^2 |F_H|^2+\int_{p^{-1}(K)}  |\nabla' \chi|^2 u_t^2
$$
for some compact set $0\in K \subset p(X\cap U)$. Here $\nabla'$ denotes the derivative in the fiber direction. Now choose $(n-1)$ more such projections and add them together. This gives the bound we need.
\end{proof}

\begin{cor}\label{L infinity bound}
For any global section $s$ of $F$, $\log^+|s|^2\in L^\infty$.
\end{cor}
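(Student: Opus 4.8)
The plan is to deduce this from the preceding proposition by a global Moser iteration, using the Hermitian--Yang--Mills equation to turn the curvature term into a harmless constant and a global Sobolev inequality on $X^{reg}$ to run the iteration. Write $v=\log|s|^2$ and $v_+=\log^+|s|^2=\max(v,0)\ge 0$. Since $s$ is a \emph{global} section, the preceding proposition together with the compactness of $X$ gives $v_+\in W^{1,2}(X^{reg})$: on $X^{reg}\setminus\Sigma$ (where $\Sigma=Z\cap X^{reg}$ has complex codimension $\ge 2$) the function $v_+$ is locally Lipschitz, near every point of $Z$ it lies in $W^{1,2}_{\mathrm{loc}}$ by that proposition, and a finite subcover of $X$ assembles these into a global bound; in particular $v_+\in L^2(X^{reg})$. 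Next I record the differential inequality. Away from $Z$ and from $\{s=0\}$, the Weitzenb\"ock identity for the holomorphic section $s$ reads
$$
\Delta_\omega\log|s|^2=\frac{|\nabla^{1,0}s|^2}{|s|^2}-\frac{|\langle\nabla^{1,0}s,s\rangle|^2}{|s|^4}-\frac{\langle\sqrt{-1}\Lambda_\omega F_H\,s,s\rangle}{|s|^2},
$$
where $\Delta_\omega$ is the $\bar\partial$-Laplacian (integration by parts is clean because $\omega$ is balanced). The first two terms are nonnegative by Cauchy--Schwarz, and the admissible Hermitian--Yang--Mills equation $\sqrt{-1}\Lambda_\omega F_H=\lambda\,\id$ makes the last term equal to the \emph{constant} $-\lambda$; hence $\Delta_\omega\log|s|^2\ge -|\lambda|$ there. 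Since the constant $0$ trivially satisfies the same inequality and a maximum of weak subsolutions is again a weak subsolution, $v_+$ is a nonnegative weak subsolution of $\Delta_\omega u\ge -|\lambda|$ on $X\setminus Z$; and because $\Sigma$ has zero $W^{1,2}$-capacity, this extends across $\Sigma$, so $\Delta_\omega v_+\ge -|\lambda|$ weakly on all of $X^{reg}$.

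The second ingredient is a global Sobolev inequality on $X^{reg}$: there should exist $C>0$ and $\chi=\tfrac{n}{n-1}>1$ with $\big(\int_{X^{reg}}|f|^{2\chi}\big)^{1/\chi}\le C\int_{X^{reg}}(|\nabla f|^2+|f|^2)$ for every $f\in W^{1,2}(X^{reg})$. This is where the singular geometry enters: in a local embedding $X\cap U\hookrightarrow U\subset\CBbb^N$ the smooth locus is a minimal submanifold, so the Michael--Simon inequality (\cite{Simon:83}) applies to functions compactly supported in $X^{reg}$, and since $X^s$ has complex codimension $\ge 2$, hence zero $W^{1,2}$-capacity, this propagates to all of $W^{1,2}(X^{reg})$ (see \cite{LiTian1995}). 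By Corollary \ref{Metric Equivalence} and Proposition \ref{Kahler metric equivalence} it is immaterial whether the norms are taken with respect to $\omega$, $\omega_1$, or the ambient flat metric.

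Finally I would run the De Giorgi--Nash--Moser iteration, working with the truncations $v_+^{(L)}:=\min(v_+,L)$, which are bounded weak subsolutions of $\Delta_\omega u\ge -|\lambda|$ on $X^{reg}$ and can be tested against all of $W^{1,2}\cap L^\infty(X^{reg})$ thanks to the zero $W^{1,2}$-capacity of $X^s$. Testing against $(v_+^{(L)})^{2\beta-1}$, using $(v_+^{(L)})^{2\beta-1}\le (v_+^{(L)})^{2\beta}+1$, integrating by parts, and feeding the result into the Sobolev inequality will give, for each $\beta\ge 1$, an estimate of the shape $\|v_+^{(L)}\|_{L^{2\beta\chi}}\le (C\beta^2)^{1/(2\beta)}\big(\|v_+^{(L)}\|_{L^{2\beta}}+|X|^{1/(2\beta)}\big)$ with constants independent of $L$; iterating over $\beta=\chi^k$, using $\sum_k k\chi^{-k}<\infty$, and letting $L\to\infty$ then yields $\|v_+\|_{L^\infty(X^{reg})}\le C'\big(\|v_+\|_{L^2(X^{reg})}+1\big)<\infty$, which is the assertion. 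I expect the genuine obstacle to be not the PDE step, which is routine once set up, but the geometric input: establishing (or quoting) the global Sobolev inequality on the noncompact manifold $X^{reg}$ together with the uniform control it encodes near $X^s$. It is worth emphasizing that the Hermitian--Yang--Mills hypothesis is used precisely to replace the a priori only $L^2$ quantity $|F_H|$ — which by itself would be insufficient for an $L^\infty$ bound via Moser iteration once $2n\ge 4$ — by the constant $\lambda$.
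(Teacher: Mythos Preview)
Your proposal is correct and follows essentially the same approach as the paper: derive the differential inequality $\Delta_\omega\log^+|s|^2\ge -|\lambda|$ from the Hermitian--Yang--Mills equation, invoke the global Sobolev inequality on $X^{reg}$ via the Michael--Simon inequality (\cite{Simon:83}) together with the Li--Tian approximation (\cite{LiTian1995}) and the metric equivalence in Corollary~\ref{Metric Equivalence}, and then run Moser iteration. Your write-up is in fact more detailed than the paper's (which simply asserts the Sobolev inequality and Moser iteration), and your use of the truncations $v_+^{(L)}$ is a careful way to make the iteration rigorous.
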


\begin{proof}
 By \cite{Simon:83}, we know $X$ admits a Sobolev inequality for compact supported functions over $X^{reg}$ if $X$ is endowed with a fixed K\"ahler metric  i.e. 
$$
\|f\|_{L^{\frac{2n}{n-1}}} \leq C (\|f\|_{L^2(X)}+\|\nabla f\|_{L^2(X)})
$$
for any $f\in C^1_{c}(X^{reg})$. More precisely, locally we can assume the K\"ahler metric is induced from the flat metric on $\CBbb^N$. In particular, $X$ is a minimal submanifold of $\CBbb^N$ away from the singular set, thus by \cite[Theorem $18.6$]{Simon:83}, there exists a Sobolev inequality for $X$. Now the approximation result in  \cite[Section $4$]{LiTian1995} implies a Sobolev inequality for functions in $W^{1,2}(X^{reg})$ where as pointed out in \cite[Section $0$]{LiTian1995}, the essential argument does not require the the variety to be projective. By Corollary \ref{Metric Equivalence}, we also have a Sobolev inequality for $(X,\omega_1\wedge \cdots \omega_{n-1})$.  Given this, we can apply the Moser iteration to $\log^+|s|^2$ which satisfies 
$$
\Delta \log^+|s|^2 \geq \lambda_\infty
$$
where $\lambda_\infty$ is the Einstein constant of $A_\infty$, thus $\log^{+}|s|^2\in L^\infty$. This concludes the proof. 
\end{proof}

\begin{rmk}
We also refer the readers to \cite{GPST:23} for the Sobolev inequalities built for a more general class of K\"ahler spaces. 
\end{rmk}

\begin{cor}\label{HE uniqueness}
The admissible Hermitian-Yang-Mills connection on $F$ is unique if it exists. Furthermore, if $\Fcal$ can be extended to be a stable reflexive sheaf over $X$, the Hermitian-Yang-Mills metric is unique up to scaling.
\end{cor}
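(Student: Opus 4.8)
Here is a plan for proving Corollary~\ref{HE uniqueness}.

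The plan is to carry over the classical uniqueness proof for Hermitian--Yang--Mills metrics (as in \cite{Kobayashi:87,LubkeTeleman:95}) to the present singular setting, using the a priori estimates of Corollary~\ref{L infinity bound} and the codimension-two property of $Z$ in place of compactness and the maximum principle. So let $H_{1},H_{2}$ be two admissible Hermitian--Yang--Mills metrics on the holomorphic bundle $F$ over $X\setminus Z$, with Einstein constants $\lambda_{1},\lambda_{2}$, and set $h:=H_{1}^{-1}H_{2}$, a smooth positive self-adjoint endomorphism of $F$ on $X\setminus Z$ with respect to either metric.

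The crucial first step is to bound $\tr(h)$ and $\tr(h^{-1})$ together with their gradients. Although $h$ itself is not holomorphic, $\id_{F}$ \emph{is} a holomorphic section of $\End F=\Hom(F,F)$, and if $\End F$ is given the tensor metric $H_{1}^{*}\otimes H_{2}$ then $\tr(h)=|\id_{F}|^{2}_{H_{1}^{*}\otimes H_{2}}$; moreover $(\End F,H_{1}^{*}\otimes H_{2})$ is again an admissible Hermitian--Yang--Mills bundle in the sense of the definition above, because its curvature is pointwise dominated by $|F_{H_{1}}|+|F_{H_{2}}|\in L^{2}$ and satisfies $\sqrt{-1}\Lambda_{\omega}F_{H_{1}^{*}\otimes H_{2}}=(\lambda_{2}-\lambda_{1})\id$. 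Hence Corollary~\ref{L infinity bound} gives $\log^{+}\tr(h)=\log^{+}|\id_{F}|^{2}\in L^{\infty}$, and the $W^{1,2}_{\mathrm{loc}}$ regularity of $\log^{+}|s|^{2}$ proved above --- which holds near every point of $X$, in particular along $X^{s}$ --- together with compactness of $X$ gives $\log^{+}\tr(h)\in W^{1,2}(X^{reg})$. Since $\tr(h)\ge\rank F\ge 1$ everywhere, we conclude $\tr(h)\in L^{\infty}\cap W^{1,2}$, with $|\nabla\tr(h)|\le\|\tr(h)\|_{L^{\infty}}|\nabla\log^{+}\tr(h)|\in L^{2}$, and symmetrically for $\tr(h^{-1})$.

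Next I would invoke the standard Bochner identity comparing the two Hermitian--Yang--Mills equations: on $X\setminus Z$ one has, for a fixed positive constant $c$ depending only on $\omega$, $\Delta\tr(h)=c(\lambda_{1}-\lambda_{2})\tr(h)+e(h)$ and $\Delta\tr(h^{-1})=c(\lambda_{2}-\lambda_{1})\tr(h^{-1})+e(h^{-1})$, where $e(h),e(h^{-1})\ge 0$ are manifestly nonnegative quantities and $e(h)=0$ precisely where $\bar\partial_{\End F}h=0$. Testing the first identity against cutoff functions $\chi_{\epsilon}$ that are $1$ off an $\epsilon$-neighbourhood of $Z$ with $\|\nabla\chi_{\epsilon}\|_{L^{2}}\to 0$ --- possible since $Z$ has complex codimension at least two --- the Cauchy--Schwarz bound $\bigl|\int\chi_{\epsilon}\langle\nabla\tr(h),\nabla\chi_{\epsilon}\rangle\bigr|\le\|\nabla\tr(h)\|_{L^{2}}\|\nabla\chi_{\epsilon}\|_{L^{2}}\to 0$ yields, in the limit, $c(\lambda_{1}-\lambda_{2})\int_{X}\tr(h)\,\dvol+\int_{X}e(h)\,\dvol=0$, and likewise for $h^{-1}$. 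Since $\tr(h),\tr(h^{-1})>0$ and $e(h),e(h^{-1})\ge 0$, the two relations together force $\lambda_{1}=\lambda_{2}$ and then $\int_{X}e(h)\,\dvol=0$, so $\bar\partial_{\End F}h=0$, i.e., $h$ is holomorphic. Being also self-adjoint for $H_{1}$, it satisfies $\partial_{H_{1}}h=(\bar\partial_{\End F}h)^{*}=0$, hence $h$ is parallel for the Chern connection $A_{1}$ of $H_{1}$; consequently the Chern connection of $H_{2}=H_{1}h$ equals $A_{1}$, which proves that the admissible Hermitian--Yang--Mills connection on $F$, if it exists, is unique.

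Finally, if in addition $\Fcal$ extends to a stable reflexive sheaf on $X$, then since the endomorphism sheaf of $\Fcal$ is reflexive and $Z$ has codimension at least two, the holomorphic endomorphism $h$ of $F$ extends to a global holomorphic endomorphism of $\Fcal$ over $X$; by Lemma~\ref{Nontrivial map must be isomorphism} a stable reflexive sheaf is simple, so $h=c\,\id$ with $c>0$, i.e., $H_{2}=cH_{1}$, which is uniqueness of the metric up to scaling. I expect the main obstacle to be precisely the boundedness and $W^{1,2}$ control in the first step: the classical maximum-principle argument is unavailable on the noncompact $X\setminus Z$, and the key point is to recognize $\tr(h)$ and $\tr(h^{-1})$ as $\log^{+}$ of norms of the \emph{holomorphic} section $\id_{F}$ of an auxiliary admissible Hermitian--Yang--Mills bundle, so that Corollary~\ref{L infinity bound} and the codimension-two hypothesis on $Z$ supply what compactness would otherwise give.
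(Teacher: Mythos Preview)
Your approach is essentially the paper's own: view $\id_F$ as a holomorphic section of $(\End F, H_1^*\otimes H_2)$, apply Corollary~\ref{L infinity bound} for the $L^\infty$ bound, use the Bochner identity and integrate by parts with cutoffs exploiting that $Z$ has complex codimension at least two, conclude that $\id$ is parallel and $\lambda_1=\lambda_2$, then invoke simplicity (Lemma~\ref{Nontrivial map must be isomorphism}) for the metric statement. The paper shortens the argument by assuming without loss of generality $\lambda_1\ge\lambda_2$, so that $|\id|^2$ is subharmonic, rather than treating $h$ and $h^{-1}$ in parallel.

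One small slip: your claim $\tr(h)\ge\rank F$ is false in general (take $\rank F=1$ and $H_2=\tfrac12 H_1$), so the chain $|\nabla\tr(h)|\le\|\tr(h)\|_{L^\infty}|\nabla\log^+\tr(h)|$ breaks down on the set $\{\tr(h)<1\}$, where $\log^+\tr(h)\equiv 0$. The fix is immediate: once you have the $L^\infty$ bound on $\tr(h^{-1})$ you get a uniform positive lower bound on the eigenvalues of $h$, and rescaling $H_2$ by a constant --- which leaves its Chern connection unchanged --- arranges $\tr(h)\ge 1$ globally, after which your derivation of $\nabla\tr(h)\in L^2$ goes through. Alternatively, the paper's route bypasses the a priori $W^{1,2}$ step entirely: with $\lambda_1\ge\lambda_2$ one has $\Delta|\id|^2\ge 2|\nabla\id|^2$, and the Kato-type bound $|\nabla|\id|^2|^2\le 4|\id|^2|\nabla\id|^2\le 2|\id|^2\Delta|\id|^2$ lets the cutoff estimate close using only $|\id|^2\in L^\infty$.
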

\begin{proof}
Otherwise, suppose we have two such metrics $H_1$ and $H_2$  with Einstein constants $\lambda_1\geq \lambda_2$. Consider the identity map $\id\in \Hom(F, F)$ with the endowed metric $H_1^*\otimes H_2$. Then straightforward computation shows
$$
\Delta |\id|^2 =2 |\nabla \id|^2-(\lambda_2-\lambda_1)|\id|^2. 
$$
Since $\id \in L^\infty$ by Corollary \ref{L infinity bound}, we can do integration by parts to conclude $\nabla \id=0$, i.e. $\id$ is parallel and $\lambda_2=\lambda_1$. The uniqueness follows. If $F$ comes from a stable reflexive sheaf over $X$, write 
$$
H_1(.,.)=H_2(g., .)
$$ where $g$ is Hermitian with respect to $H_2$. The fact that $\id$ is parallel implies $g$ is holomorphic, thus a multiple of the identity map by Lemma \ref{Nontrivial map must be isomorphism}.  
\end{proof}

\subsection{Hermitian-Yang-Mills metrics using perturbations}
The following has been observed in the K\"ahler case (see \cite{CGG:21} \cite{Wu:21}) by using the boundedness result in \cite{Toma:19}. Similar argument also works in our setting. Take a resolution $p:\hat X \rightarrow X$ and let $\hat{\Ecal}:=(p^*\Ecal)^{**}$.
\begin{prop}\label{Openess of stability}
Suppose $\Ecal$ is a stable reflexive sheaf over $(X, \omega_1 \wedge \cdots \omega_{n-1})$, then $\hat \Ecal$ is stable over $(\hat X, (p^*\omega_1+i^{-1} \theta) \wedge \cdots (p^*\omega_{n-1}+i^{-1} \theta) )$ for $i>>1$.
\end{prop}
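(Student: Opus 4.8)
The plan is to show that stability of $\hat{\Ecal}$ with respect to the perturbed multipolarization $(p^*\omega_1 + i^{-1}\theta)\wedge\cdots\wedge(p^*\omega_{n-1}+i^{-1}\theta)$ for $i\gg 1$ follows from a limiting/openness argument, using that stability of $\hat{\Ecal}$ with respect to $p^*\omega_1\wedge\cdots\wedge p^*\omega_{n-1}$ holds \emph{by definition} (this is how the stability of $\Ecal$ over the normal variety was defined in Section \ref{Chern classes}, via a single resolution, and by Proposition \ref{Well-definedness of slope} the degree does not depend on the choice). The only subtlety is that $p^*\omega_1\wedge\cdots\wedge p^*\omega_{n-1}$ is merely a nef (semipositive balanced) class on $\hat{X}$, not K\"ahler, so a naive ``open condition'' argument on the K\"ahler cone does not directly apply; one must control the slopes of \emph{all} saturated subsheaves uniformly.

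First I would set up notation: for a torsion-free coherent $\Fcal$ on $\hat{X}$ write $\mu_i(\Fcal)$ for its slope with respect to $(p^*\omega_1+i^{-1}\theta)\wedge\cdots\wedge(p^*\omega_{n-1}+i^{-1}\theta)$, and $\mu_\infty(\Fcal)$ for the slope with respect to the limiting balanced class $p^*\omega_1\wedge\cdots\wedge p^*\omega_{n-1}$. Expanding the $(n-1)$-fold wedge product, $\mu_i(\Fcal) = \mu_\infty(\Fcal) + i^{-1} A_1(\Fcal) + i^{-2}A_2(\Fcal) + \cdots$, where each $A_k(\Fcal)$ is a linear functional in $c_1(\Fcal)$ paired against a fixed class built from $\theta$ and the $p^*\omega_j$; in particular $|\mu_i(\Fcal) - \mu_\infty(\Fcal)| \leq C i^{-1} |c_1(\Fcal)\cdot\gamma|$ for fixed classes $\gamma$, with $C$ independent of $\Fcal$. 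For $\hat{\Ecal}$ itself this is harmless. The key step is then to invoke the boundedness result of Toma (\cite{Toma:19}): the set of saturated subsheaves $\Fcal\subset\hat{\Ecal}$ that are \emph{potentially destabilizing} for some member of the family of multipolarizations in a fixed compact region of the (movable/balanced) cone forms a bounded family, so in particular the quantities $c_1(\Fcal)\cdot\gamma$ and the relevant discriminant-type terms entering the Grothendieck-type bound on maximal destabilizing subsheaves are uniformly bounded. Hence there is a finite list of numerical types, and for each the ``gap'' $\mu_\infty(\hat{\Ecal}) - \mu_\infty(\Fcal) > 0$ (strict, by definition of stability of $\hat{\Ecal}$ with respect to $p^*\omega_1\wedge\cdots\wedge p^*\omega_{n-1}$) dominates the $O(i^{-1})$ perturbation once $i$ is large enough.

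Concretely the steps, in order: (1) record the slope expansion above and the uniform $O(i^{-1})$ bound on $|\mu_i - \mu_\infty|$ in terms of bounded numerical data; (2) apply \cite{Toma:19} to conclude that the possible Harder--Narasimhan/destabilizing numerical types, across the family $\{(p^*\omega_j + i^{-1}\theta)_j : i \geq i_0\}$, range over a finite set, so it suffices to defeat finitely many candidate destabilizers; (3) for each candidate $\Fcal$, note $\delta(\Fcal) := \mu_\infty(\hat{\Ecal}) - \mu_\infty(\Fcal) > 0$, set $\delta_0 = \min_\Fcal \delta(\Fcal) > 0$, and choose $i_0$ with $2C i_0^{-1}\sup|c_1(\Fcal)\cdot\gamma| < \delta_0$; (4) conclude $\mu_i(\Fcal) < \mu_i(\hat{\Ecal})$ for all $i \geq i_0$ and all subsheaves, i.e.\ $\hat{\Ecal}$ is stable for the perturbed multipolarization. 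I expect the main obstacle to be step (2): making precise that Toma's boundedness, which is stated for (multi-)polarizations varying in a compact subset of an appropriate cone, indeed covers the whole tail $\{i \geq i_0\}$ of our perturbation family — this requires checking that the perturbed classes $(p^*\omega_j + i^{-1}\theta)$ stay in a fixed compact subset of the relevant cone (they converge to the boundary point $p^*\omega_j$, so one works with the compact set $\{s p^*\omega_j + \theta : 0 \leq s \leq i_0\}$ after rescaling, or directly with the segment joining $\theta$ to the nef classes $p^*\omega_j$) and that the boundedness is uniform there. The remaining steps are elementary estimates on linear functionals of Chern classes.
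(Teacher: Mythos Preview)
Your outline correctly identifies Toma's boundedness as the essential input, but the paper's proof takes a different path that sidesteps the obstacle you flag in step~(2). Rather than applying Toma uniformly over the degenerating family of multipolarizations, the paper argues by contradiction with destabilizing \emph{quotients} $q_i:\hat{\Ecal}\to\Fcal_i$, first passing to a further resolution so that $\hat{\Ecal}$ is locally free, and then fixing a Hermitian metric on $\hat{\Ecal}$. The second-fundamental-form inequality for quotients gives a uniform lower bound $c_1(\Fcal_i)\cdot[\theta]^l\cdot[p^*\omega_{k_1}]\cdots[p^*\omega_{k_{n-1-l}}]\geq C$ for every mixed monomial appearing in the expansion of $\mu_i$. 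Feeding this into the destabilizing inequality yields a bound on $\mu_\infty(\Fcal_i)$ for the \emph{single} limiting class $p^*\omega_1\wedge\cdots\wedge p^*\omega_{n-1}$. Toma is then invoked once, for that class alone: the pushed-forward quotients $p_*q_i$ lie in a bounded component of the Douady space of quotients of $\Ecal$ over $X$, and a subsequential limit $q_\infty:\Ecal\to\Fcal_\infty$ satisfies $\mu(\Fcal_\infty)\leq\mu(\Ecal)$, contradicting stability of $\Ecal$. The metric trick is the missing ingredient in your sketch; it is what converts the problem to a single-class boundedness statement and removes the degeneration issue entirely.

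Your step~(3) also has a genuine gap: a bounded family in the Douady sense need not have only finitely many numerical types, so $\delta_0=\min_{\Fcal}\delta(\Fcal)$ is a priori only an infimum over infinitely many candidates and could be zero. To rule this out you would need exactly the compactness/limit argument the paper uses --- pass to a limiting subsheaf and observe it would destabilize $\hat{\Ecal}$ for the limiting class. So your gap argument and the paper's contradiction argument are two sides of the same coin, but the limit formulation is what actually justifies $\delta_0>0$.
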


\begin{proof}
By passing to a further resolution (\cite[Theorem $3.5$]{Ross:68}), it suffices to prove it when $\hat{\Ecal}$ is locally free. By definition, $\hat{\Ecal}$ is stable with respect to $p^*\omega_1 \wedge \cdots p^*\omega_{n-1}$. Otherwise,  the push-forward of the destabilizing subsheaf will destabilize $\Ecal$ as well. Now we argue by contradiction for the main statement. By passing to a subsequence, we have a sequence of quotient maps $q_i: \hat{\Ecal}\rightarrow  \Fcal_i$ and $\mu_i(\Fcal_i)\leq \mu_i(\hat\Ecal)$ where $\mu_i$ denotes the slope of a sheaf with respect to the metric $(p^*\omega_1+i^{-1} \theta) \wedge \cdots (p^*\omega_{n-1}+i^{-1} \theta)$. By choosing a metric on $\hat{\Ecal}$, an easy computation shows 
$$c_1(\Fcal_i)[\theta]^{l} [p^*\omega_{k_1}] \cdots [p^*\omega_{k_{n-l-1}}]\geq C$$
for some $C$ independent of $l,k_1,\cdots k_{l-1}$. In particular, this implies 
$$
\mu(\Fcal_i)+O(\frac{1}{i}) \leq \mu_i(\hat\Ecal).
$$
By the boundedness result in \cite[Corollary $6.3$]{Toma:19} applied to the degree defined by $\omega_1 \wedge \cdots \omega_{n-1}$, we can assume $$
p_*q_i: \Ecal \rightarrow \Img(p_*q_i)
$$ 
lies in the same component of the corresponding Douady space and has a limit over $X$. Thus we get a quotient map $q_\infty: \Ecal \rightarrow \Fcal_\infty$ where $\mu(\Fcal_\infty)\leq \mu(\Ecal)$. This contradicts the stability of $\Ecal$.
\end{proof}

Now we will fix a resolution $p:\hat{X} \rightarrow X$ so that $\hat{\Ecal}:=(p^*\Ecal)^{**}$ is locally free. From the above, for any $i>>1$, by the Donaldson-Uhlenbeck-Yau theorem for multipolarizations (see Theorem \ref{Smooth DUY}), there exists a Hermitian-Yang-Mills metric $H_i$ on $\hat \Ecal$ over $(\hat X, (p^*\omega_1+i^{-1} \theta) \wedge \cdots (p^*\omega_{n-1}+i^{-1} \theta))$. In particular, 
\begin{equation}\label{HE equation for perturbation}
\begin{aligned}
&(\sqrt{-1} F_{H_i}-\frac{\tr(\sqrt{-1} F_{H_i})}{\rank \Ecal}\id) \wedge  (p^*\omega_1+i^{-1} \theta) \wedge \cdots (p^*\omega_{n-1}+i^{-1} \theta)=0
\end{aligned}
\end{equation}
i.e. 
$$
\sqrt{-1} F_{H_i}-\frac{\tr(\sqrt{-1} F_{H_i})}{\rank \Ecal}\id
$$
is primitive with respect to $ (p^*\omega_{k_1}+i^{-1} \theta) \wedge \cdots (p^*\omega_{k_{n-2}}+i^{-1} \theta)$ for any $1\leq k_1<\cdots k_{n-2}\leq n-1$. Now the Hodge-Riemann property applied to $(p^*\omega_{i_1}+i^{-1} \theta) \wedge \cdots (p^*\omega_{i_{n-2}}+i^{-1} \theta)$  (see Equation (\ref{Hodge Riemann property})) implies the following Bogomolov-Gieseker inequality 
\begin{equation}\label{GB inequality for perturbation}
\int_{\hat X} (2rc_2(A_i)-(r-1)c_1^2(A_i)) \wedge  (p^*\omega_{i_1}+i^{-1} \theta) \wedge \cdots (p^*\omega_{i_{n-2}}+i^{-1} \theta) \geq 0
\end{equation}
where the inequality holds if and only if $A_i$ is projectively flat.

Let $\hat E$ be the underlying smooth bundle of $\hat{\Ecal}$. By doing complex gauge transforms (\cite[Section 5]{UhlenbeckYau:86}), we have a sequence of Hermitian-Yang-Mills connections $A_i$ on a fixed unitary bundle $(\hat E, H)$. Then we can take a Uhlenbeck limit using gauge theory (see \cite{ChenWentworth:21a}). Namely, by passing to a subsequence, up to gauge transforms, we can assume $A_i$ converges locally smoothly to a limiting connection $A_\infty$ defined on $E|_{X\setminus \Sigma}$ where $\Sigma$ is the so-called bubbling set defined as 
\begin{equation}
\Sigma=\{z\in X^{reg}: \lim_{r\rightarrow 0^+} \liminf_{i\rightarrow \infty} r^{4-2n}\int_{B_r(z)}|F_{A_i}|^2\dvol_i>0\}.
\end{equation}
Here the base is given by $X\setminus \Sigma$ together with a sequence of metircs that converge to the metric given by the multiple K\"ahler forms, thus Uhlenbeck compactness applies.

By \cite[Proposition $46$]{ChenWentworth:21c} (see also \cite[Theorem $4.3.3$]{Tian:00} for the K\"ahler case), we know
\begin{lem}
$\Sigma \subset X^{reg}$ is a subvariety of codimension at least two.
\end{lem}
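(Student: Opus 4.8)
The plan is to reduce to the Kähler statement of \cite[Theorem $4.3.3$]{Tian:00} and its adaptation on resolutions in \cite[Proposition $46$]{ChenWentworth:21c}, after checking that the analytic inputs (uniform energy bound, $\varepsilon$-regularity, monotonicity, classification of tangent cones) are available here for the relevant balanced background metrics, which is the content of \cite{ChenWentworth:21a}. First I would record a uniform energy bound: since $\sqrt{-1}\Lambda_{\omega_i}F_{A_i}=\lambda_i\,\id$ with $\lambda_i$ bounded (the perturbed slopes converge to $\mu(\hat{\Ecal})$), the Chern--Weil identity together with the boundedness of the classes $[p^*\omega_j+i^{-1}\theta]$ gives $\int_{\hat X}|F_{A_i}|^2\,\dvol_i\le C$ with $C$ independent of $i$. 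Since $\Sigma\subset X^{reg}$ and $p$ restricts to a biholomorphism over $X^{reg}$, on every compact $K\Subset X^{reg}$ the Hermitian metrics associated to $(p^*\omega_1+i^{-1}\theta)\wedge\cdots(p^*\omega_{n-1}+i^{-1}\theta)$ converge in $C^\infty$ to the nondegenerate balanced metric attached to $\omega_1\wedge\cdots\omega_{n-1}$.

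Next I would apply the $\varepsilon$-regularity theorem and the approximate monotonicity formula for Hermitian--Yang--Mills connections with respect to a smooth Hermitian background, as in \cite{ChenWentworth:21a}, to get a uniform $\varepsilon_0>0$ and uniform monotonicity constants on each $K$. Combined with Uhlenbeck's compactness theorem this yields, after passing to a subsequence and applying gauge transformations, smooth convergence $A_i\to A_\infty$ on $X^{reg}\setminus\Sigma$ with $\Sigma$ exactly the set in the statement and $\Sigma$ relatively closed in $X^{reg}$; moreover the measures $|F_{A_i}|^2\,\dvol_i$ converge weakly to $|F_{A_\infty}|^2\,\dvol_\infty+\nu$ with $\nu$ supported on $\Sigma$, and monotonicity forces the $(2n-4)$-density of this limit to be bounded below by $\varepsilon_0$ on $\Sigma$ and bounded above everywhere. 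A standard covering argument then gives $\mathcal{H}^{2n-4}(\Sigma\cap K)<\infty$, so $\Sigma$ has real codimension at least four; in particular $\int_{X^{reg}\setminus\Sigma}|F_{A_\infty}|^2<\infty$, so $A_\infty$ is an admissible Hermitian--Yang--Mills connection.

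It remains to upgrade $\Sigma$ from a closed set of locally finite $(2n-4)$-measure to a complex-analytic subvariety, of complex codimension at least two (which is then automatic). Here one exploits that all the approximating data are holomorphic: each $F_{A_i}$ is of type $(1,1)$ with scalar $\Lambda$-trace, so the tangent cones of the defect measure $\nu$ are holomorphic Yang--Mills cones, and the stratification-and-density argument of \cite{Tian:00} (respectively \cite{ChenWentworth:21c}) applies once $A_\infty$ is known to be admissible Hermitian--Yang--Mills --- which is the previous step --- and $p$ is biholomorphic over $X^{reg}$ --- which lets us work entirely downstairs on $X^{reg}$. This identifies $\Sigma$ with a locally finite union of irreducible complex subvarieties of $X^{reg}$, necessarily of codimension at least two.

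The main obstacle I expect is precisely this third step in the nonclassical background: the ambient metrics on $\hat X$ are balanced (the $(n-1)$-st roots of products of Kähler forms) rather than Kähler, so one must use the monotonicity formula and the tangent-cone classification in the form proved in \cite{ChenWentworth:21a} instead of the classical Kähler versions; granting those, holomorphicity of $\Sigma$ is formal. A minor point to double-check is that the blow-up analysis never interacts with the exceptional divisor of $p$, which is immediate because $\Sigma\subset X^{reg}$ by definition and $p$ is an isomorphism there.
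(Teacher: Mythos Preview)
Your proposal is correct and takes essentially the same approach as the paper: the paper's entire ``proof'' is the citation to \cite[Proposition $46$]{ChenWentworth:21c} (with \cite[Theorem $4.3.3$]{Tian:00} for the K\"ahler case), preceded by the reference to \cite{ChenWentworth:21a} for the Uhlenbeck limit in the balanced Hodge--Riemann setting. Your sketch simply unpacks what those cited results contain --- uniform energy bound, $\varepsilon$-regularity and monotonicity for the balanced background, and the tangent-cone/holomorphicity analysis for $\Sigma$ --- and correctly observes that the only subtlety beyond the classical K\"ahler case is that the background metrics are balanced of Hodge--Riemann type, which is exactly what \cite{ChenWentworth:21a} handles.
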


\begin{rmk}
As we will see later, $\Sigma$ is actually empty. 
\end{rmk}

We need the following key observation regarding the holomorphic vector bundle $\Ecal_\infty$ over $X^{reg}$ defined by $A_\infty$.
\begin{prop}\label{Slope equality}
Assume $\Ecal_\infty$ can be extended to be a reflexive sheaf over $X$. Then $\mu(\Ecal_\infty)=\mu(\Ecal)$. In particular, the admissible Hermitian-Yang-Mills metric computes the slope of $\Ecal_\infty$.
\end{prop}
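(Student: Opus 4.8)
The plan is to compute the degree of $\Ecal_\infty$ by exploiting the fact that $A_\infty$ arises as a local smooth limit of the Hermitian-Yang-Mills connections $A_i$ on the resolution $\hat X$, together with the codimension-two structure of the singular/bubbling set $Z = X^s \cup \Sigma$. First I would write $p^*\omega_1 \wedge \cdots \wedge p^*\omega_{n-1} = \Omega^0_{n-1} + d\Phi_{n-1}$ as in Equation \eqref{n-1 forms}, valid since $Z$ has codimension at least two and hence $p^*[\omega_1]\wedge\cdots\wedge p^*[\omega_{n-1}]$ restricts trivially to $p^{-1}(Z)$; here $\Omega^0_{n-1}$ is compactly supported in $\hat X \setminus p^{-1}(Z)$, which we identify with an open subset of $X \setminus Z$. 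On the resolution, $\deg(\Ecal) = \int_{\hat X} c_1(A_i) \wedge (p^*\omega_1 + i^{-1}\theta)\wedge\cdots\wedge(p^*\omega_{n-1}+i^{-1}\theta)$ up to an $O(1/i)$ error coming from the $\theta$-perturbation (using that $c_1(A_i)$ represents a fixed class and the perturbation terms are controlled), and since $c_1$ is a topological quantity this equals $\int_{\hat X} c_1(A_i)\wedge \Omega^0_{n-1} + \int_{\hat X} c_1(A_i)\wedge d\Phi_{n-1} + O(1/i)$, and the exact piece integrates to zero because $c_1(A_i)$ is closed. So $\deg(\Ecal) = \int_{\hat X} c_1(A_i)\wedge \Omega^0_{n-1} + O(1/i)$, an integral over a \emph{fixed} compact region away from $p^{-1}(Z)$.

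Next, on that fixed compact region $K \Subset \hat X \setminus p^{-1}(Z)$ the connections $A_i$ converge smoothly (up to gauge) to $A_\infty$, so $c_1(A_i) \to c_1(A_\infty)$ smoothly on $\operatorname{supp}\Omega^0_{n-1}$, and passing to the limit gives
\begin{equation*}
\deg(\Ecal) = \int_{X\setminus Z} c_1(A_\infty)\wedge \Omega^0_{n-1}.
\end{equation*}
On the other hand, since $\Ecal_\infty$ extends to a reflexive sheaf over $X$ agreeing with the bundle defined by $A_\infty$ on $X\setminus Z$, and since $A_\infty$ is an admissible Hermitian-Yang-Mills connection (finite curvature energy), I would argue that $\int_{X\setminus Z} c_1(A_\infty)\wedge \Omega^0_{n-1} = \deg(\Ecal_\infty)$: pick any resolution $p': \hat X' \to X$ on which the strict transform $\hat\Ecal_\infty$ is locally free, pull the identity $\Omega^0_{n-1} = p^*\omega_1\wedge\cdots - d\Phi_{n-1}$ back, and use that $c_1(A_\infty)$ agrees with $c_1(\hat\Ecal_\infty)$ away from the exceptional locus plus $Z$ (a codimension-two set, over which the class $[\Omega^0_{n-1}]$ vanishes) — this is the same bookkeeping as in Proposition \ref{Well-definedness of slope}. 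The one analytic input needed is that the admissible connection $A_\infty$ with $L^2$ curvature genuinely computes the first Chern class of its reflexive extension; this is a codimension-two removable-singularity statement for $c_1$, and the finiteness $\int |F_{A_\infty}|^2 < \infty$ together with Corollary \ref{Metric Equivalence} makes $c_1(A_\infty)$ an $L^1$ form whose cohomology class across the codimension-two set $Z$ is unchanged (a standard cut-off argument, using that a codimension-two set has zero $2n-2$ dimensional measure and the Chern form is integrable).

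Combining the two computations yields $\deg(\Ecal) = \deg(\Ecal_\infty)$, and since $\rank\Ecal_\infty = \rank\Ecal$ (the limiting bundle has the same rank as $\hat E$) we get $\mu(\Ecal_\infty) = \mu(\Ecal)$; the last sentence of the statement is then immediate, since $A_\infty$ is by construction an admissible Hermitian-Yang-Mills metric on $\Ecal_\infty$ and its degree equals $\deg(\Ecal_\infty)$ by the displayed formula. The main obstacle I anticipate is justifying the interchange of limit and integral — more precisely, controlling the contribution of $c_1(A_i)$ near the bubbling set: one must know that no first-Chern-class mass escapes into $\Sigma$ as $i\to\infty$. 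This is exactly where the compact support of $\Omega^0_{n-1}$ away from $p^{-1}(Z) \supset p^{-1}(\Sigma)$ is essential, so that the convergence $A_i \to A_\infty$ is smooth on the entire support of the test form and no delicate measure-theoretic bubbling analysis for $c_1$ is needed; the $c_2$-type quantities are the ones that genuinely bubble (and that is the content of the later Bogomolov-Gieseker discussion), but $c_1$ is safe precisely because of this codimension count.
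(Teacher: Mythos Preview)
Your strategy is essentially the paper's: localize the degree computation to a compactly supported form $\Omega^0_{n-1}$ via Equation \eqref{n-1 forms}, pass from $c_1(A_i)$ to $c_1(A_\infty)$ by convergence, and identify the result with $\deg(\Ecal_\infty)$. The one tactical difference is that you support $\Omega^0_{n-1}$ away from $p^{-1}(Z)=p^{-1}(X^s\cup\Sigma)$, whereas the paper supports it only away from $p^{-1}(X^s)$. Your choice makes the convergence step trivial (smooth convergence on $\operatorname{supp}\Omega^0_{n-1}$), but it shifts the real analytic burden to your final step, showing $\int_{X\setminus Z} c_1(A_\infty)\wedge\Omega^0_{n-1}=\deg(\Ecal_\infty)$.

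That step is where your argument is imprecise. You invoke a ``standard cut-off argument'' using the $L^2$ curvature bound to say the cohomology class of $c_1(A_\infty)$ is unchanged across $\Sigma$; this is the right intuition but not a proof, and the detour through a second resolution $p'$ is unnecessary and slightly confused (there is no map $\hat X'\to\hat X$ along which to pull $\Phi_{n-1}$ back). The clean argument, which the paper gives explicitly, is to apply Theorem~\ref{Extension} (Bando--Siu) to $\det\Ecal_\infty$: since the determinant is a line bundle it is locally free on all of $X^{reg}$, so the admissible HYM metric extends \emph{smoothly} across $\Sigma$, and therefore $c_1(A_\infty)=\tr(\tfrac{\sqrt{-1}}{2\pi}F_{A_\infty})$ is a genuine smooth closed form on $X^{reg}$ representing $c_1(\Ecal_\infty)|_{X^{reg}}$. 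With that in hand your integral is a cohomological pairing on $\hat X$ and equals $c_1(\hat\Ecal_\infty)\cdot p^*[\omega_1]\cdots p^*[\omega_{n-1}]=\deg(\Ecal_\infty)$ by definition. You should also note, as the paper does, that $\lambda_i\to\lambda_\infty$ so that the Einstein constant of $A_\infty$ indeed corresponds to $\mu(\Ecal_\infty)$; this is what makes the ``in particular'' clause follow.
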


\begin{proof}
Denote $\hat \Ecal_\infty =(p^*\Ecal_\infty)^{**}$. By Equation \ref{n-1 forms}, we can write 
$$
p^*(\omega_1  \wedge \cdots\omega_{n-1})=\Omega^0_{n-1}+d\Phi_{n-1}
$$
where $\Omega^0_{n-1}$ is compact supported in $\hat{X} \setminus p^{-1}(X^s)$, which is possible because $X^s\subset X$ is a subvariety of codimension at least two. Given this, we have the following
$$
\begin{aligned}
\mu(\Ecal_\infty)
&= \frac{\int_{\hat X} c_1(\hat{\Ecal}_\infty) \wedge p^*(\omega_1  \wedge \cdots\omega_{n-1})}{\rank \hat{\Ecal}_\infty}\\
&=\frac{\int_{\hat X} c_1(\hat{\Ecal}_\infty) \wedge (\Omega_{n-1}^{0}+d\Phi_{n-1})}{\rank \hat{\Ecal}_\infty}\\
&=\frac{\int_{\hat X} c_1(\hat{\Ecal}_\infty) \wedge \Omega_{n-1}^{0}}{\rank \hat{\Ecal}_\infty}\\
&=\frac{\int_{\hat X} c_1(A_\infty) \wedge \Omega_{n-1}^{0}}{\rank \hat{\Ecal}_\infty}\\
&=\lim_i \frac{\int_{\hat X} c_1(A_i) \wedge \Omega_{n-1}^{0}}{\rank \hat{\Ecal}_\infty}\\
&=\lim_i \frac{\int_{\hat X} c_1(A_i) \wedge p^*(\omega_1  \wedge \cdots\omega_{n-1})}{\rank \hat{\Ecal}_\infty}\\
&=\lim_i \frac{\int_{\hat X} c_1(A_i) \wedge (p^*(\omega_1)+i^{-1}\theta)  \wedge \cdots (p^*(\omega_{n-1})+i^{-1}\theta)}{\rank \hat{\Ecal}_\infty}\\
&=\lim_i \mu_i(\hat \Ecal)
\end{aligned}
$$
All the equalities are straightforward except the fourth and fifth one: the fourth one follows from the fact that $c_1(A_\infty)$ is smooth over $X^{reg}$ by Theorem \ref{Extension} applied to $\det(\Ecal_\infty)$ and $\Omega_{n-1}^0$ is compact supported away from the exceptional divisors; the fifth one follows from the  strong $L^1$ convergence of  $F_{A_i}$ over the support of $\Omega_{n-1}^0$.  Combined with that the Einstein constant of $A_i$ converges to the Einstein constant of $A_\infty$, we know the admissible Hermitian-Yang-Mills metric computes the slope of $\Ecal_\infty$.
\end{proof}

\subsection{Nontrivial limits of holomorphic sections away from the singular set} In this section, we will prove a technical result needed later. By Corollary \ref{Metric Equivalence}, we can assume $X$ is endowed with a K\"ahler metric. 

The convergence of the Hermitian-Yang-Mills connections in the previous sections can be now described by saying we have a sequence of connections, up to gauge transforms, converging locally smoothly over $X\setminus Z$ where $Z=X^s \cup \Sigma$ as defined before. Here we fix the original metric on $X$ given by $\omega_1 \wedge \cdots \omega_{n-1}$ to look at the convergence. Namely, $A_i$ is a sequence of unitary connections on a unitary bundle $(\hat{\Ecal}, H)$ satisfying
\begin{itemize}
\item $F_{A_i}^{0,2}=0$;
\item up to gauge transforms, $A_i$ converges to $A_\infty$ locally smoothly over $X\setminus Z$.
\end{itemize}
For any $\epsilon>0$, define $Z^\epsilon$ to be a closed $\epsilon$-neighborhood of $Z$ in $X$. Furthermore, $Z^\epsilon \subset \text{Interior}(Z^{\epsilon'})$ if $\epsilon< \epsilon'$ and $Z^\epsilon$ converges to $Z$ as $\epsilon \rightarrow 0$. Suppose we have a sequence of holomorphic sections $\{s_i\in H^0(X\setminus Z, \Ecal_i)\}$ with  $\|s_i\|_{L^2(X_\epsilon)}=1$ where $X_\epsilon=X\setminus Z^\epsilon$. Standard elliptic theory for holomorphic sections guarantees the existence of a limit over $X^\epsilon$.  The goal is to show the limit is actually nontrivial using similar argument in \cite{ChenSun:18}. 
\subsubsection{Curve property} 
\begin{defi}
A closed subset $S \subset X$ admits a \emph{good cover} if $X$ can be covered by finitely many open sets $U_{k}\subset \CBbb^N$ where $\omega|_{U_k\cap X}=\sqrt{-1}\partial \bar{\partial} \rho|_{X\cap U_k}$ and $\rho$ is a strongly smooth pluri-subhamronic function on $\overline{U_k}$ such that
\begin{itemize}
\item $ U_k =B^{l}_{\delta^k_2}\times B_{\delta^k_3}^{l'}$ for some $\delta_2^k,\delta_3^k>0$, where $B^l_{\delta^k_2}$ denotes the ball $\{|z|<\delta^k_{2}\}$ in $\CBbb^l$ and $B^{l'}_{\delta^k_3}$ denote the ball $\{|z|<\delta_3^k\}$ in $\CBbb^{l'}$. Here $l+l'=N$.
\item $ \overline{U_k}\cap S \subset V_k =B^l_{\delta^k_1}\times \overline{B^{l'}_{\delta^k_3}}$ for some $0<\delta^k_1< \delta^k_2$ where $\overline{U_k}=\overline{B^{l}_{\delta^k_2}}\times \overline{B_{\delta^k_3}^{l'}}$.
\item for any $z\in \overline{B^{l'}_{\delta_3^k}}\cap \rho_k (X\cap \overline{U_k})$, $\rho_k^{-1}(z)\cap X$ is a smooth surface away from finitely many point. Here $\rho_k: B^l_{\delta_1^k} \times \overline{B^{l'}_{\delta_3^k}} \rightarrow \overline{B^{l'}_{\delta_3^k}}$ denotes the natural projection.
\end{itemize}
\end{defi}

\begin{lem}\label{lem2.10}
$Z\subset X$ admits a good cover. 
\end{lem}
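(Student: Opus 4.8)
The plan is to build the cover one point of $Z$ at a time and then invoke compactness of $Z$. Fix $z \in Z$. Since $X$ is a normal variety, by definition there is a local embedding $X \cap W \hookrightarrow W \subset \CBbb^N$ near $z$, together with a smooth strongly plurisubharmonic defining function $\rho$ for $\omega$ on (a neighbourhood of) $\overline{W}$. The first step is a linear-algebra / generic-projection argument: $Z$ near $z$ is a complex-analytic subvariety of codimension at least two in $X$ (recall $X^s$ has codimension $\geq 2$ by normality, and $\Sigma \subset X^{\mathrm{reg}}$ has codimension $\geq 2$ by the bubbling-set lemma), while $X$ itself has pure dimension $n$. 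After a linear change of coordinates on $\CBbb^N$ I may arrange that the projection $\pi \colon \CBbb^N = \CBbb^l \times \CBbb^{l'} \to \CBbb^{l'}$ (with $l + l' = N$, and $l'$ chosen so that $\dim X = l' + $ (a fixed excess) $\,$— concretely one wants the fibres $\pi^{-1}(t) \cap X$ to be one-complex-dimensional near $z$) restricts to a finite map on $X$ near $z$ and, crucially, is \emph{finite} on $Z$ near $z$ as well; this is possible because $\dim Z \le \dim X - 2$, so a generic linear subspace of complementary dimension misses the tangent directions of $Z$ appropriately. This is the step I expect to be the main obstacle: getting the two transversality requirements simultaneously — finiteness of $\pi|_{X}$ and finiteness of $\pi|_{Z}$, i.e. that no positive-dimensional fibre of $\pi$ is swallowed into $Z$ — and doing so with the prescribed \emph{product} shape $B^l_{\delta_2} \times B^{l'}_{\delta_3}$ for the chart and $B^l_{\delta_1} \times \overline{B^{l'}_{\delta_3}}$ for the smaller box containing $\overline{U_k} \cap Z$, which forces $Z$ to stay inside a shrunken polydisc in the \emph{fibre} directions uniformly over the base.

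The second step is to shrink radii to realize the product structure. Since $\pi|_Z$ is finite near $z$, after translating so $z = 0$ there are radii so that $\overline{U} \cap Z \subset B^l_{\delta_1} \times \overline{B^{l'}_{\delta_3}}$ with $\delta_1 < \delta_2$: pick $\delta_3$ small enough that $\pi^{-1}(\overline{B^{l'}_{\delta_3}}) \cap Z$ is contained in a compact subset of the $\CBbb^l$-factor (possible by finiteness/properness of $\pi|_Z$ and $Z \cap (\{0\}\times \CBbb^{l'})$ being just $\{0\}$ in the relevant component), then take $\delta_1$ just larger than the $\CBbb^l$-radius of that compact set and $\delta_2 > \delta_1$. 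The third step handles the fibre condition: for generic $t \in \overline{B^{l'}_{\delta_3}}$ the fibre $\pi^{-1}(t) \cap X$ avoids $Z$ entirely (because $\pi|_Z$ is finite, hence its image is a proper subvariety of the base, of codimension $\geq 1$), and such a fibre is a smooth analytic curve since it lies in $X^{\mathrm{reg}}$ and is cut out transversally; for the remaining (measure-zero) set of $t$, the fibre $\pi^{-1}(t)\cap X$ is still a pure one-dimensional analytic set, hence smooth away from its finitely many singular points — which is exactly the "smooth surface away from finitely many points'' clause (here "surface'' means real surface = complex curve). One should also record that $\omega|_{X \cap U_k}$ is still $\sqrt{-1}\partial\bar\partial\rho_k$ for the restricted $\rho_k$, which is immediate, and that $\rho_k$ remains strongly plurisubharmonic on $\overline{U_k}$ after shrinking.

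Finally, by construction every $z \in Z$ lies in such a chart $U_z$; since $X$ (hence $Z$) is compact and the $U_z \cap X$ form an open cover of a neighbourhood of $Z$ in $X$, extract a finite subcover $\{U_k\}$ of $Z$, and then enlarge it to a finite cover of all of $X$ by throwing in finitely many further product charts $U_k = B^l_{\delta_2^k} \times B^{l'}_{\delta_3^k}$ over points of $X \setminus Z$, on which the condition $\overline{U_k} \cap Z \subset V_k$ is vacuous (one may take $V_k$ empty, or just note $\overline{U_k}\cap Z = \emptyset$). This produces the required good cover, completing the proof.
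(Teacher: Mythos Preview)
Your argument is correct and follows the same strategy as the paper: build a product-box chart around each point via a generic linear projection (one-dimensional fibres on $X$, with $Z$ trapped in a shrunken inner box in the fibre direction), then use compactness of $X$ to extract a finite cover. Two small slips worth fixing: you call $\pi|_X$ a \emph{finite} map when you actually want one-dimensional fibres, and $Z = X^s \cup \Sigma$ need not be analytic near $X^s$ (only closed, since $\Sigma$ is merely analytic in $X^{\mathrm{reg}}$), so ``$\pi|_Z$ finite'' should be read as the central fibre meeting $Z$ in a finite set --- the paper phrases this by noting that the $\Sigma$-points in a fibre accumulate at most on the finitely many $X^s$-points of that fibre.
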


\begin{proof}
For any $z\in X$, since $X_s$ is a codimension $2$ complex subvariety of $X$ and $\Sigma$ is a subvariety of $X^{reg}$ of codimension at least two, locally near $z$, we can cover $X$ with an open set $U$ in $\CBbb^N$ by assuming $z=0$ so that for some orthogonal projection $\rho: U\subset \CBbb^N=\CBbb^l\times \CBbb^{l'} \rightarrow \CBbb^{l'}$, $\rho^{-1}(y)\cap X\cap B^{l}_{\delta_2}$ is a smooth surface away from $\rho^{-1}(y)\cap X_s\cap B^l_{\delta_2}$ which consists of finitely many points for any $y\in \rho(X\cap U)$ for some $\delta_3>0$; furthermore, $\rho^{-1}(y) \cap \Sigma\cap B^{l}_{\delta_2}$ consists of points which accumulate at most near $X_s\cap B^l_{\delta_2}$. Then near $z$, we can easily construct a neighborhood  $U_p=B^{l}_{\delta_2}\times B^{l'}_{\delta_3}$ for some $\delta_2, \delta_3>0$ and $\overline{U_p}\cap \Sigma \subset V_p$ where $V_p=B^l_{\delta_1}\times \overline{B^{l'}_{\delta_3}}$ for some $0<\delta_1<\delta_2$. Now we get such an open cover of $X$ given by $\cup_p U_p$. Since $X$ is compact, we can take a finite subcover $\cup_k U_{p_k}$ which gives a good cover for $Z\subset X$.
\end{proof}

\begin{cor}\label{good cover}
For $0<\epsilon<<1$, $Z^{\epsilon}\subset X$ admits a good cover.
\end{cor}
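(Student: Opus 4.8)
The plan is to recycle the very cover $\{(U_k,\rho_k)\}_k$ constructed in the proof of Lemma \ref{lem2.10}, enlarging only the auxiliary polydiscs $V_k$ so that they still trap $Z^\epsilon$ once $\epsilon$ is small. The first observation is that two of the three bullets in the definition of a good cover are insensitive to the set being covered: the shape $U_k=B^l_{\delta^k_2}\times B^{l'}_{\delta^k_3}$ is a feature of the cover itself, and the condition that $\rho_k^{-1}(z)\cap X$ be a smooth surface away from finitely many points refers to the pair $(U_k,X)$ only. Hence these persist verbatim, and the entire task reduces to arranging, for small $\epsilon$, an inclusion $\overline{U_k}\cap Z^\epsilon\subset V_k$ with a possibly slightly fattened $V_k$.

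Next, for each of the finitely many indices $k$, I would exploit the \emph{strict} inequality $\delta^k_1<\delta^k_2$: fix a number $s_k$ with $\delta^k_1<s_k<\delta^k_2$ and put $V_k'=B^l_{s_k}\times\overline{B^{l'}_{\delta^k_3}}$. Then I would show there is an $\epsilon_k>0$ such that $\overline{U_k}\cap Z^\epsilon\subset V_k'$ for every $0<\epsilon<\epsilon_k$. This is a routine compactness argument: if it failed, there would be $\epsilon_j\to 0$ and points $x_j=(z_j,w_j)\in\overline{U_k}\cap Z^{\epsilon_j}$ with $|z_j|\ge s_k$; since $\overline{U_k}\cap X$ is compact (closed in the compact polydisc $\overline{U_k}$, and $Z^{\epsilon_j}\subset X$), a subsequence converges to some $x_\infty=(z_\infty,w_\infty)\in\overline{U_k}\cap X$ with $|z_\infty|\ge s_k$; choosing $y_j\in Z$ with $\dist(x_j,y_j)\le\epsilon_j$ forces $y_j\to x_\infty$, so $x_\infty\in Z$ because $Z$ is closed in $X$; but then $x_\infty\in\overline{U_k}\cap Z\subset V_k$, i.e.\ $|z_\infty|<\delta^k_1<s_k$, a contradiction.

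Finally I would set $\epsilon_0=\min_k\epsilon_k$, which is positive since the cover is finite; for any $0<\epsilon<\epsilon_0$ the data $\{(U_k,\rho_k)\}_k$ together with the polydiscs $V_k'$ fulfil all three requirements of a good cover for the closed set $Z^\epsilon$, which is the assertion. The only point that needs care — the closest thing to an obstacle — is that one must resist shrinking the $U_k$, since that could destroy the covering property $X=\bigcup_k U_k$; the slack must instead be taken entirely out of $\delta^k_1<\delta^k_2$ by fattening the $V_k$, after which the convergence $Z^\epsilon\downarrow Z$ does the rest.
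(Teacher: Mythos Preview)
Your proof is correct and follows essentially the same approach as the paper: reuse the good cover for $Z$ from Lemma~\ref{lem2.10}, note that the first and third bullets are automatic, and verify the second bullet by a compactness/contradiction argument showing $\overline{U_k}\cap Z^\epsilon$ cannot escape the polydisc as $\epsilon\to 0$. The only cosmetic difference is that the paper keeps the original $V_k$ rather than fattening it to $V_k'$: since the second factor of any $z_i\in\overline{U_k}$ automatically lies in $\overline{B^{l'}_{\delta^k_3}}$, and $B^l_{\delta^k_1}$ is open in the first factor, $z_i\to z\in V_k$ already forces $z_i\in V_k$ eventually, so your introduction of the intermediate radius $s_k$ is harmless but unnecessary.
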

\begin{proof}
Otherwise, suppose the statement is false for a sequence $\epsilon_i\rightarrow 0^+$. By definition, $Z^{\epsilon_i}$ converges to $Z$. Let $\cup_k U_k$ be a good cover for $Z$. We want to show it is a good cover for $Z^{\epsilon_i}$ for $i$ large, thus get a contradiction. We only need to verify that $\overline{U_k}\cap Z^{\epsilon_i}\subset V_k$ for $i$ large. Otherwise, by passing to a subsequence and using the finiteness of $\{U_k\}_k$, we can assume for some fixed $k$, there always exists $z_{i} \in (\overline{U_k}\cap Z^{\epsilon_i})\setminus V_k$ for each $i$ and $z_i$ converges to $z\in \overline{U_k}\cap Z$. Then $z\in V_k$ and thus $z_i\in V_k$ for $i$ large. Contradiction.
\end{proof}

\begin{lem}\label{Curve property}
For any $\epsilon>0,$ let $\cup_kU_k$ be a good cover of $Z^{\epsilon}\subset X$. Then there exists a constant $C=C(\epsilon)>0$ so that for any $z\in X\setminus Z^{\frac{\epsilon}{2}}  $, there exists a curve  $D_z \subset U_k\cap X$ for some $k$ such that $\partial D_z\subset X\setminus Z^{\epsilon}$, $d(\partial D_z, \partial (X\setminus Z^\epsilon))\geq C$ and 
$d(D_z, Z)\geq C.$ Here $d$ denotes the distance between two sets. 
\end{lem}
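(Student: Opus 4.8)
The plan is to work one coordinate chart at a time from the good cover $\cup_k U_k$ of $Z^\epsilon$, and inside each chart to produce the curve $D_z$ as a fiber of the projection $\rho_k$. Fix $z\in X\setminus Z^{\epsilon/2}$. Since $\cup_k U_k$ covers $X$, pick $k$ with $z\in U_k=B^l_{\delta_2^k}\times B^{l'}_{\delta_3^k}$; write $z=(z',z'')$ and consider the fiber $F:=\rho_k^{-1}(z'')\cap X\cap U_k$, which by the good-cover hypothesis is a smooth surface away from finitely many points, all of which lie in $V_k=B^l_{\delta_1^k}\times\overline{B^{l'}_{\delta_3^k}}$, i.e. in the $\{|z'|<\delta_1^k\}$ region. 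The point $z$ lies on $F$. First I would choose $D_z$ to be the slice $F\cap(\{r<|z'|<\delta_2^k\})$ for a suitable radius $r$, an honest (open) smooth curve once $r$ is bounded away from the finitely many bad radii; its boundary has two pieces, the inner sphere $\{|z'|=r\}$ and the outer sphere $\{|z'|=\delta_2^k\}$. The key point to arrange is that $z$ itself lies in this annular region and that both boundary components sit in $X\setminus Z^\epsilon$ with a definite distance to $Z$ and to $\partial(X\setminus Z^\epsilon)$.

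Here is the mechanism that makes this work. Because $z\notin Z^{\epsilon/2}$ and $\overline{U_k}\cap Z^\epsilon\subset V_k$, there is a dichotomy: either $z$ already lies outside a slightly shrunk $V_k$ — say $|z'|\geq\delta_1^k$ after possibly enlarging $V_k$ inside $U_k$, which we may assume by shrinking $\delta_1^k$ away from $\delta_2^k$ in the construction of the good cover — in which case one takes the full fiber slice $|z'|\le\delta_2^k$ and only the outer boundary sphere matters; or $z$ is in the region $|z'|<\delta_1^k$, but then $|z'|$ is bounded below by a constant depending only on $\epsilon$ and the (finitely many) charts, since $Z^{\epsilon/2}\cap\overline{U_k}$ has positive distance to the shell $\delta_1^k\le|z'|\le\delta_2^k$ while $z\notin Z^{\epsilon/2}$. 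Concretely: the sets $Z^{\epsilon/2}$, the spheres $\{|z'|=\delta_1^k\}\times\overline{B^{l'}_{\delta_3^k}}$ and $\{|z'|=\delta_2^k\}\times\overline{B^{l'}_{\delta_3^k}}$, and $\partial(X\setminus Z^\epsilon)\subset\partial Z^\epsilon$ are finitely many compact sets, pairwise disjoint by the good-cover inclusions, so a single uniform constant $C=C(\epsilon)>0$ bounds all the relevant distances from below. One then invokes Sard/finiteness to pick $r\in(\tfrac12\cdot(\text{lower bound}),\text{(something}<\delta_2^k))$ avoiding the finitely many singular radii of $F$ and so that $|z'|$ lands strictly inside $(r,\delta_2^k)$; this is possible precisely because $|z'|$ was bounded below. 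The resulting $D_z$ is a smooth curve in $U_k\cap X$, its boundary consists of the two spheres which, living in $\{|z'|=r\}$ and $\{|z'|=\delta_2^k\}$ with $r$ bounded below, lie in $X\setminus Z^\epsilon$ at distance $\ge C$ from both $Z$ and from $\partial(X\setminus Z^\epsilon)$, and $D_z\ni z$.

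The remaining routine checks are: (i) $\partial D_z\subset X\setminus Z^\epsilon$ — follows since $\overline{U_k}\cap Z^\epsilon\subset V_k=\{|z'|<\delta_1^k\}\times\cdots$ and the boundary spheres have $|z'|\in\{r,\delta_2^k\}$ with both values $\ge\delta_1^k$ (for $r$, by the lower bound just established; here we must take the good-cover constants so that the admissible range of $r$ starts at or above $\delta_1^k$, shrinking $\delta_1^k$ if necessary); (ii) the distance estimates $d(\partial D_z,\partial(X\setminus Z^\epsilon))\ge C$ and $d(D_z,Z)\ge C$, which reduce to the compactness/disjointness observation above together with $d(D_z,Z)\ge d(\{|z'|\ge r\}\cap U_k,\ Z\cap U_k)$ since $Z\cap\overline{U_k}\subset\{|z'|<\delta_1^k\}$ — wait, that last inclusion is for $Z^\epsilon$, but $Z\subset Z^\epsilon$ so it holds a fortiori; (iii) uniformity of $C$ over all $z$, which holds because there are finitely many charts $U_k$ and the construction in each only used chart-dependent constants and $\epsilon$.

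\textbf{Main obstacle.} The genuine difficulty is not the curve-slicing, which is essentially the setup of the good cover, but pinning down that the radial coordinate $|z'|$ of $z$ is bounded \emph{below} by a constant independent of $z$ and $k$ whenever $z\notin Z^{\epsilon/2}$ but $z\in V_k$. This is where all the geometry of the good cover is used: one needs $Z^{\epsilon/2}\cap\overline{U_k}$ to be \emph{compactly} contained in the interior $\{|z'|<\delta_1^k\}$ with a definite gap, so that the complement of $Z^{\epsilon/2}$ within $\overline{U_k}$ cannot approach the ``core'' $\{z'=0\}$ where the singular points of the fibers sit. Making the constants in the good cover (the $\delta_1^k<\delta_2^k$ and the neighborhood $Z^\epsilon$ versus $Z^{\epsilon/2}$) consistent across the finitely many charts — i.e. choosing $\epsilon$ small enough, via Corollary \ref{good cover}, that a \emph{single} good cover serves both $Z^\epsilon$ and $Z^{\epsilon/2}$ — is the step I would be most careful about; once that is in place the rest is a finite compactness argument.
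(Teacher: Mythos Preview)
Your central claim---that $|z'|$ is bounded below by a constant depending only on $\epsilon$ whenever $z\notin Z^{\epsilon/2}$ but $z\in V_k$---is false, and the annular-slice construction collapses with it. The good-cover condition says only that $\overline{U_k}\cap Z^\epsilon\subset V_k=\{|z'|<\delta_1^k\}$; it does \emph{not} say that $X\setminus Z^{\epsilon/2}$ avoids the core $\{z'=0\}$. Since $Z$ has codimension at least two in $X$ while each fiber $\rho_k^{-1}(z'')\cap X$ is one-dimensional, a generic fiber meets $Z$ in only finitely many points, and the rest of that fiber---including points with $|z'|$ arbitrarily small---lies outside $Z^{\epsilon/2}$ once the $z''$-coordinate is large enough. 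Concretely: take $X=\CBbb^2$ with coordinates $(z',z'')$, $Z=\{(0,0)\}$, $\epsilon=\tfrac{1}{10}$; then $z=(\tfrac{1}{100},\tfrac{1}{2})$ is at distance roughly $\tfrac{1}{2}$ from $Z$, so $z\notin Z^{\epsilon/2}$, yet $|z'|=\tfrac{1}{100}$. Your observation that ``$Z^{\epsilon/2}\cap\overline{U_k}$ has positive distance to the shell $\delta_1^k\le|z'|\le\delta_2^k$'' is correct but irrelevant: it constrains where $Z^{\epsilon/2}$ sits, not its complement.

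Worse, the annulus $\{r<|z'|<\delta_2^k\}$ cannot be salvaged even case by case: in the same fiber, the $Z$ points may have radial coordinate \emph{larger} than $|z'|$ at $z$ (they are only constrained to lie in $\{|z'|<\delta_1^k\}$), so no choice of $r$ will simultaneously contain $z$ and exclude $Z$. The paper does not attempt a uniform radial choice. Instead it argues pointwise: for each fixed $z\in\overline{X\setminus Z^{\epsilon/2}}$, the fiber through $z$ meets $Z$ in a discrete set not containing $z$, so one can produce \emph{some} curve $D_z\ni z$ with $D_z\cap Z=\emptyset$ and $\partial D_z\subset (U_k\setminus V_k)\cap X$; a small perturbation gives such curves for all $z'$ in a neighborhood $V_z$ with distance bounds uniform on $V_z$. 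Compactness of $\overline{X\setminus Z^{\epsilon/2}}$ then yields a finite subcover $\{V_{z_i}\}$, and $C(\epsilon)$ is the minimum of the finitely many associated constants. The uniformity you are after comes from this compactness step, not from a direct radial estimate.
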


\begin{proof}
For each $k$, $\overline{U_k} \cap Z \subset \overline{U_k}\cap Z^{\frac{\epsilon}{2}}\subset \overline{U_k} \cap Z^{\epsilon} \subset V_k$  where $\overline{U_k} =\overline{B^{l}_{\delta^k_2}}\times \overline{B_{\delta^k_3}^{l'}}$ and $V_k=B^l_{\delta^k_1}\times \overline{B^{l'}_{\delta^k_3}}$. Consider the projection $\rho_k: \overline{U_k} \rightarrow \overline{B^{l'}_{\delta^k_3}}$.  By assumption, we have
$$(\overline{B^l_{\delta^k_2}}\times \overline{B^{l'}_{\delta^k_3}}) \cap Z = \overline{U_k} \cap Z \subset B^l_{\delta^k_1}\times \overline{B^{l'}_{\delta^k_3}}$$
which implies $\rho_k^{-1}(y)\cap Z \cap B^{l}_{\delta_2^k}$ is a compact subset of $B^l_{\delta_1^k}$ for any $y\in \rho_k(X\cap \overline{U_k})$. 
Also we know $\rho_k^{-1}(y)\cap X_s \cap B^{l}_{\delta_2^k}$ is a complex subvariety of $B^{l}_{\delta_2^k}$, thus consists of finitely many points. Since  $((\overline{B^l_{\delta^k_2}}\times \overline{B^{l'}_{\delta^k_3}}) \cap Z \setminus X_s) \cap \rho^{-1}(y)$ is a subvariety of $B^l_{\delta_1^k}\setminus X_s$, we know it consists of finitely many points which accumulate at most near $X_s\cap B^l_{\delta_1^k}$.
Now for any $z \in \overline{X\setminus Z^{\frac{\epsilon}{2}}}$, suppose $z\in U_k$, then $\rho^{-1}_k(\rho_k(z))\cap Z\cap B^l_{\delta_2^k}$ consists of points which accumulate mostly  near $Z_s\cap B^{l}_{\delta_1}$. As a result, one can easily find a curve $D_z \subset U_k$ containing $z$ such that $D_z \cap Z=\emptyset$ and $\partial D_z \subset (U_k\setminus V_k) \cap X \subset U_k \cap X \setminus Z^{\epsilon}$.  By perturbing the curve $D_z$, we can find an open neighborhood $V_z$ of $z$ so that for each $z' \in V_z$ there exists such a  curve $D_{z'}$ so that $D_{z'}\cap Z=\emptyset$ and $\partial D_{z'} \subset (U_k\setminus V_k) \cap X \subset U_k \cap X \setminus Z^{\epsilon}$. Furthermore, $\inf_{z'\in V_z}d(D_{z'}, Z)>0$ and $\inf_{z'}d(\partial D_{z'}, \partial (X\setminus Z^\epsilon))>0$.  As a result, we get an open cover $\cup_{z\in \overline{X\setminus Z^{\frac{\epsilon}{2}}}} V_z$ of $X\setminus Z^{\epsilon}$.  Since $\overline{X\setminus Z^{\frac{\epsilon}{2}}}$ is compact, we can find a finite subcover $\cup_{z_i} V_{z_i}$. Let $C(\epsilon)=\min_{i}\{\inf_{z\in V_{z_i}} d(D_{z}, Z),\inf_{z\in V_{z_i}}d(\partial D_{z}, \partial(X\setminus Z^{\epsilon}))\}$. This finishes the proof.  
\end{proof}

\subsubsection{Nontrivial limits}

\begin{prop}\label{nontrivial limit}
For any fixed $0<\epsilon<<1$, $s_i$ converges to a nontrivial holomorphic section $s_\infty \in H^0(X\setminus Z, \Ecal_\infty)$.
\end{prop}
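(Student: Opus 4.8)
The plan is to produce, after passing to a subsequence, a limit $s_\infty$ of the $s_i$ that is a global holomorphic section of $\Ecal_\infty$ over $X\setminus Z$ and is nontrivial because its $L^2$-norm on $X_\epsilon$ equals $1$. The only thing that can go wrong is that the $L^2$-mass of $s_i$ escapes toward $Z$, so that the limit vanishes on $X_\epsilon$; preventing this is exactly what Lemma~\ref{Curve property} is for. Throughout one uses that $A_i\to A_\infty$ in $C^\infty$ on compact subsets of $X\setminus Z$ (and the Einstein constants are bounded), so $\sup_i\sup_K|F_{A_i}|<\infty$ for every compact $K\subset X\setminus Z$; also $\overline{X_\epsilon}$ is a compact subset of $X\setminus Z$ and $X_\eta\Subset X_{\eta'}$ whenever $\eta'<\eta$.

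The core of the argument is a propagation estimate: \emph{for every $\eta>0$, if $\sup_i\|s_i\|_{L^2(X_\eta)}<\infty$ then $\sup_i\|s_i\|_{C^0(X_{\eta/2})}<\infty$, with bounds depending only on $\eta$, the $L^2$-bound, and the fixed geometry.} To prove it I would fix the good cover of $Z^\eta$ and the constant $C=C(\eta)$ from Lemma~\ref{Curve property}. On the core $X_\eta'=\{w\in X_\eta:\dist(w,\partial X_\eta)\ge C\}$ the balls $B_{C/2}(w)$ lie inside $X_\eta\subset X\setminus Z$, where $s_i$ is holomorphic and $|F_{A_i}|$ is uniformly bounded; since then $\Delta|s_i|^2\ge -c\,|s_i|^2$ with $c$ uniform, a Moser-type local boundedness estimate gives $\sup_{X_\eta'}|s_i|^2\lesssim \|s_i\|_{L^2(X_\eta)}^2$. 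Next, for arbitrary $z\in X_{\eta/2}$ I restrict $s_i$ to the curve $D_z$ from Lemma~\ref{Curve property}: it is a compact holomorphic curve lying in $\{\dist(\cdot,Z)\ge C\}$ and with $\partial D_z\subset X_\eta'$. If $s_i\not\equiv 0$ on $D_z$, then $\log|s_i|^2$ is subharmonic on $D_z$ up to a uniformly bounded additive error (the linear inequality $\sqrt{-1}\partial\bar\partial\log|s_i|^2\ge -K\omega$, with $K$ controlled by the curvature bound on $\{\dist(\cdot,Z)\ge C\}$), so comparing with the solution of $\Delta w=-K$ on $D_z$ with boundary value $\max_{\partial D_z}\log|s_i|^2$ and invoking the maximum principle yields $|s_i(z)|^2\lesssim \sup_{\partial D_z}|s_i|^2\le \sup_{X_\eta'}|s_i|^2$, with a uniform multiplicative constant because every $D_z$ sits in one of the finitely many fixed charts and hence has controlled diameter; the case $s_i\equiv 0$ on $D_z$ is trivial. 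Taking the supremum over $z\in X_{\eta/2}$ gives the claim. It is essential here to work with $\log|s_i|^2$ rather than $|s_i|^2$ on $D_z$, so that the differential inequality is linear and the maximum principle carries no first-eigenvalue obstruction.

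With the propagation estimate available I would apply it first with $\eta=\epsilon$, using $\|s_i\|_{L^2(X_\epsilon)}=1$, to bound $\|s_i\|_{C^0(X_{\epsilon/2})}$ and hence $\|s_i\|_{L^2(X_{\epsilon/2})}$ uniformly, and then iterate with $\eta=\epsilon/2,\epsilon/4,\dots$ (Lemma~\ref{Curve property} holds for every parameter) to get uniform $C^0$ bounds on $X_{\epsilon/2^k}$ for every $k$. Since $\dist(K,Z)>0$ for any compact $K\subset X\setminus Z$, each such $K$ lies in some $X_{\epsilon/2^k}$, so this gives uniform $C^0$ bounds on compact subsets of $X\setminus Z$. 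Interior elliptic estimates for the systems $\bar\partial_{A_i}s_i=0$ (whose coefficients are uniformly bounded on compacts since $A_i\to A_\infty$ smoothly) then upgrade these to uniform $C^\infty_{loc}$ bounds, and a diagonal extraction produces a subsequence with $s_i\to s_\infty$ in $C^\infty_{loc}(X\setminus Z)$; passing $\bar\partial_{A_i}s_i=0$ to the limit shows $s_\infty\in H^0(X\setminus Z,\Ecal_\infty)$. Finally, because the convergence is uniform on the compact set $\overline{X_\epsilon}$, one has $\|s_\infty\|_{L^2(X_\epsilon)}=\lim_i\|s_i\|_{L^2(X_\epsilon)}=1$, so $s_\infty\not\equiv 0$.

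I expect the main obstacle to be the propagation estimate, and within it the uniformity of the maximum principle over the entire family $\{D_z\}$ of slicing curves. This uniformity is precisely what the good-cover structure of Lemma~\ref{lem2.10}--Corollary~\ref{good cover} is built to supply, and that structure is in turn available only because $Z$ has codimension at least two, so the local projections $\rho_k$ have one-dimensional fibers; the remaining ingredients (the Moser estimate on the core, interior elliptic regularity, and the diagonal argument) are routine.
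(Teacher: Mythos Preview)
Your proposal is correct and follows the same overall strategy as the paper: a propagation estimate $\|s_i\|_{L^\infty(X_{\eta/2})}\le C(\eta,\|s_i\|_{L^2(X_\eta)})$ obtained by (i) an interior elliptic bound on the ``core'' of $X_\eta$ and (ii) a maximum-principle argument on the slicing curves $D_z$ of Lemma~\ref{Curve property}, followed by iteration in $\eta$ and a diagonal extraction.

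The one technical point where the paper is sharper is the uniformity in step (ii). You propose to solve $\Delta_{D_z} w=-K$ on each curve and argue uniformity from ``controlled diameter''; this requires uniform control of the Green's functions (equivalently, of the intrinsic geometry) across the whole family $\{D_z\}$, which is not immediate from diameter bounds alone. The paper bypasses this entirely by exploiting that each chart carries a smooth K\"ahler potential $\rho_k$ with $\omega=\sqrt{-1}\partial\bar\partial\rho_k$, so $\Delta_{D}\rho_k\equiv 1$ for \emph{every} curve $D\subset U_k$; then $\log(|s_i|^2+1)+C_\eta\rho_k$ is genuinely subharmonic on $D_z$ and the maximum principle gives a bound in terms of $\sup_{\overline{U_k}}\rho_k-\inf_{\overline{U_k}}\rho_k$, which is uniform since there are only finitely many charts. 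Using $\log(|s_i|^2+1)$ rather than $\log|s_i|^2$ also avoids the bookkeeping around zeros of $s_i$. Your argument goes through once you substitute this barrier for the per-curve Dirichlet solution.
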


\begin{proof}
We consider any $0<\epsilon<<1$ so that $X^\epsilon=
X\setminus Z^{\epsilon}$ satisfies the conditions needed in Lemma \ref{Curve property}. By induction, it suffices to show that 
$$
\|s_i\|_{L^\infty(X^{\frac{\epsilon}{2}})} \leq C:=C(\epsilon, \|s_i\|_{L^2(X^{\epsilon})})
$$
which implies the strong convergence of $s_i$ over $X^{\epsilon}$ for any $0<\epsilon<<1$. More precisely, given the estimates, we first have strong convergence over $X^{\epsilon}$, then over $X^\frac{\epsilon}{2}$, we have a uniform $L^\infty$ estimate, thus the inductive estimates 
$$
\|s_i\|_{L^\infty(X^{\frac{\epsilon}{4}})} \leq C:=C(\frac{\epsilon}{2}, \|s_i\|_{L^2(X^{\frac{\epsilon}{2}})})
$$
give strong convergence over $X^{\frac{\epsilon}{2}}$. Continuing this way, we obtain the convergence by passing to a subsequence. Now for any point $z\in X^{\frac{\epsilon}{2}}$, we want to prove 
$$
|s_i(z)| \leq C.
$$
Let $D$ be the holomorphic curve obtained in Lemma \ref{Curve property}. Let $t=s_i|_{D}$, we know 
$$
\Delta_{D} \log(|t|^2+1) \geq  -|F_{A_\infty}|_{D}| \geq -C_\epsilon.
$$
Here since the disk $D$ has a definite distance of $Z$ by assumption which might depend on $\epsilon$, we have $|F_{A_\infty}|D| \leq C_\epsilon$ for some $C_\epsilon$. Now near the point $z$, $X\cap U\subset U_k \subset \CBbb^n$, $\omega=i\partial \bar{\partial}\rho_k|_{X\cap U}$ for some smooth strongly plurisubharmonic function on $\overline{U_k}$. By definition, $\Delta_D \rho_k=1$ thus we have 
$$
\Delta_D (\log(|t|^2+1)+C_\epsilon \rho_k)\geq 0
$$
which by Maximum principle implies 
$$
\log(|t|^2+1)+C_\epsilon \rho_k \leq C_\epsilon \sup_{\partial D} \rho_k+\sup_{\partial D} \log(|t|^2+1) \leq C(\epsilon, \|s_i\|_{L^2(X^{\epsilon})}). 
$$
For the last inequality, the bound of the first term is trivial, and we only explain the second one. The interior estimate over $X^\epsilon$ for holomorphic sections implies $s_i$ is uniformly bounded within any precompact subset of $X^{\epsilon}$. In particular, it is uniformly bounded over $\partial D$ which lies in a fixed precompact subset of $X^\epsilon$,  
since $\partial D \subset X^\epsilon$ has a definite distance to $\partial X^\epsilon$.  This finishes the proof. 
\end{proof}

\section{Proof of Singular Donaldson-Uhlenbeck-Yau theorem}
We first prove the rank $1$ case. Recall we have a limiting Hermitian-Yang-Mills connection $A_\infty$ coming from the perturbed Hermitian-Yang-Mills connections and $A_\infty$ defines a holomorphic vector bundle $\Ecal_\infty$ over $X\setminus Z$ where $Z=X_s\cup \Sigma$ as before. 

By Proposition \ref{nontrivial limit}, we can conclude the existence of a nontrivial map $\Phi:  \Ecal_\infty \rightarrow \Ecal$ coming from the normalized identity map $\id: \Ecal \rightarrow \Ecal$. More precisely, to get the limit, we consider $\Hom(\Ecal, \Ecal)$ with the metrics $H_i^*\otimes H'$ where $H_i$ are the sequence of Hermitian-Yang-Mills metrics and $H'$ is any fixed smooth metric on $\Ecal$ away from $X_s$.  Now we apply Proposition \ref{nontrivial limit} to the sequence of holomorphic sections given by the identity map and get $\Phi: \Ecal_\infty \rightarrow \Ecal$.   

\begin{lem}\label{DUY for rank 1}
Given a rank $1$ reflexive sheaf $\Ecal$ over $X$, there exists an admissible Hermitian-Yang-Mills metric $H$ on $\Ecal$ with 
$$\mu(\Ecal)=\frac{\int_X c_1(H) \wedge \omega_1 \wedge \cdots \omega_{n-1}}{\rank \Ecal}.
$$  
\end{lem}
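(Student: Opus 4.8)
The plan is to show that, in rank one, the limiting connection $A_\infty$ already produces an admissible Hermitian-Yang-Mills metric on $\Ecal$ itself, and then to read off the Chern-Weil identity from Proposition \ref{Slope equality}. Everything reduces to assembling facts proved above; the genuine higher-rank difficulties (saturations, Weitzenb\"ock arguments on $\Hom(\wedge^k\Ecal_\infty,\det\Gcal)$) simply do not appear here. Recall that the construction preceding the lemma gives a limiting Hermitian-Yang-Mills connection $A_\infty$ on a line bundle $\Ecal_\infty$ over $X\setminus Z$ with $Z=X_s\cup\Sigma$, $\Sigma$ of codimension at least two in $X^{reg}$; the Uhlenbeck limit of \cite{ChenWentworth:21a} also provides the finite-energy bound $\int_{X\setminus Z}|F_{A_\infty}|^2<\infty$, so $A_\infty$ is an admissible Hermitian-Yang-Mills connection on $\Ecal_\infty$ in the sense of Section \ref{Regularity results}. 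Moreover, applying Proposition \ref{nontrivial limit} to the normalized identity section of $\Hom(\Ecal,\Ecal)$, as explained just above, yields a nontrivial morphism $\Phi\colon\Ecal_\infty\to\Ecal$ over $X\setminus Z$.

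The first real step is to extend $\Ecal_\infty$ to a reflexive sheaf on all of $X$. Since $\Ecal_\infty$ and $\Ecal$ are torsion free of rank one, $\Phi$ is injective with torsion cokernel supported on a hypersurface $W\subset X\setminus Z$; as $Z$ has codimension at least two, Remmert-Stein shows the closure $\overline W$ is a hypersurface in $X$, and hence $\Ecal_\infty$ extends to the reflexive rank one sheaf $\big(\Ecal\otimes\Ocal_X(-\overline W)\big)^{**}$ (equivalently, one may extend the line bundle furnished by Theorem \ref{Extension} on $X^{reg}$ across $X_s$ by pushforward, both complements being of codimension at least two). With an extension in hand, Proposition \ref{Slope equality} applies and gives $\mu(\Ecal_\infty)=\mu(\Ecal)$, together with the statement that the admissible Hermitian-Yang-Mills metric computes $\mu(\Ecal_\infty)$ via $\int_X c_1(A_\infty)\wedge\omega_1\wedge\cdots\omega_{n-1}$.

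Finally, every rank one reflexive sheaf is vacuously stable, so $\Ecal_\infty$ and $\Ecal$ are stable reflexive sheaves of the same slope, and Lemma \ref{Nontrivial map must be isomorphism} forces the nontrivial map $\Phi$ to be an isomorphism. Transporting the Chern connection of the admissible Hermitian-Yang-Mills metric on $\Ecal_\infty$ through $\Phi$ produces an admissible Hermitian-Yang-Mills metric $H$ on $\Ecal$, and the identity $\mu(\Ecal)=\tfrac{1}{\rank\Ecal}\int_X c_1(H)\wedge\omega_1\wedge\cdots\omega_{n-1}$ is exactly Proposition \ref{Slope equality} carried back along the isomorphism. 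The only step that is not pure bookkeeping is the extension of $\Ecal_\infty$ across $Z$ as a coherent sheaf, i.e.\ using $\Phi$ to invoke Remmert-Stein so that Proposition \ref{Slope equality} becomes available; I expect this to be the main point, and it is precisely the place where the rank one hypothesis is essential, since rank one reflexive sheaves are divisorial.
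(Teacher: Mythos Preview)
Your proof is correct and follows essentially the same route as the paper: use the nontrivial map $\Phi$ to invoke Remmert--Stein and extend $\Ecal_\infty$ to a reflexive rank one sheaf on $X$, apply Proposition \ref{Slope equality} to get $\mu(\Ecal_\infty)=\mu(\Ecal)$, and then conclude $\Phi$ is an isomorphism via Lemma \ref{Nontrivial map must be isomorphism}. The only cosmetic difference is that the paper phrases the Remmert--Stein step as extending $(\Ecal\otimes\Ecal_\infty^*)^{**}$, while you extend the vanishing divisor $W$ of $\Phi$ and write $\Ecal_\infty\cong(\Ecal\otimes\Ocal_X(-\overline W))^{**}$; these are two descriptions of the same extension.
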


\begin{proof}
Since there exists a nontrivial map between $\Ecal$ and $\Ecal_\infty$ over $X^{reg}$, $\Ecal_\infty$ can be extended to be a reflexive rank $1$ sheaf over $X$. Indeed, by the Remmert-Stein extension theorem, $(\Ecal\otimes \Ecal_\infty^*)^{**}$ can be extended to be a rank $1$  reflexive sheaf from which the extension of $\Ecal_\infty$ follows trivially. It has to be an isomorphism since $\Ecal$ and $\Ecal_\infty$ have the same slope by Proposition \ref{Slope equality} and Lemma \ref{Nontrivial map must be isomorphism}. 
\end{proof}

In particular, for general rank, this gives 
\begin{cor}\label{Isomorphic determinant}
The induced admissible Hermitian-Yang-Mills connection on $\det(\Ecal_\infty)$ defines a sheaf isomorphic to $\det(\Ecal)$. Furthermore, $A_\infty$ computes the slope of $\det(\Ecal)$.
\end{cor}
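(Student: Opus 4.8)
The strategy is to recognize the induced connection $\det A_\infty$ on $\det(\Ecal_\infty):=\wedge^{r}\Ecal_\infty$, where $r:=\rank\Ecal$, as precisely the limiting object produced by the preceding construction when it is run on the rank-one reflexive sheaf $\det(\Ecal):=(\wedge^{r}\Ecal)^{**}$, and then to quote Lemma~\ref{DUY for rank 1}. First I would check that $\det A_\infty$ is an admissible Hermitian-Yang-Mills connection over $X\setminus Z$: tracing $\sqrt{-1}\Lambda_\omega F_{A_\infty}=\lambda\,\id$ gives $\sqrt{-1}\Lambda_\omega F_{\det A_\infty}=r\lambda$, a constant, while $|F_{\det A_\infty}|\lesssim|F_{A_\infty}|$ pointwise gives $\int_{X\setminus Z}|F_{\det A_\infty}|^2<\infty$.

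Next I would run the perturbation machinery with determinants. On the fixed resolution $p$, the line bundle $\det\hat E=\wedge^{r}\hat E$ carries the metrics $\det H_i$, which are Hermitian-Yang-Mills with respect to $(p^*\omega_1+i^{-1}\theta)\wedge\cdots(p^*\omega_{n-1}+i^{-1}\theta)$ because tracing Equation~(\ref{HE equation for perturbation}) makes $\sqrt{-1}\Lambda F_{\det H_i}$ constant; moreover $\det\hat\Ecal=\wedge^{r}\hat\Ecal$ agrees with the pullback of $\det(\Ecal)$ away from the exceptional divisor, so it is an element of $\Ebold_p$ for $\det(\Ecal)$, and by Proposition~\ref{Well-definedness of slope} one has $\deg(\det(\Ecal))=c_1(\det\hat\Ecal)\,p^*[\omega_1]\cdots p^*[\omega_{n-1}]=\deg(\Ecal)$. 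Since the complex transforms of \cite[Section~5]{UhlenbeckYau:86} and the subsequent unitary gauge fixing are functorial, they commute with $\wedge^{r}$; hence $\det A_i$, up to unitary gauge, is a sequence of Hermitian-Yang-Mills connections on $\det\hat E$ whose Uhlenbeck limit is the induced connection $\det A_\infty$, its bubbling set being contained in $\Sigma$ because $|F_{\det A_i}|\lesssim|F_{A_i}|$. In other words, $(\det(\Ecal_\infty),\det A_\infty)$ is exactly the output of the preceding construction applied to the stable rank-one sheaf $\det(\Ecal)$.

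Finally I would apply Lemma~\ref{DUY for rank 1} to $\det(\Ecal)$: it supplies a nontrivial map $\det(\Ecal_\infty)\to\det(\Ecal)$ over $X^{reg}$, coming via Proposition~\ref{nontrivial limit} from the normalized identity section of $\Hom(\det(\Ecal),\det(\Ecal))$ with the metrics $(\det H_i)^*\otimes\det H'$; it extends $\det(\Ecal_\infty)$ across $Z$ to a rank-one reflexive sheaf over $X$ by the Remmert-Stein theorem; it identifies this extension with $\det(\Ecal)$ via Proposition~\ref{Slope equality} and Lemma~\ref{Nontrivial map must be isomorphism}; and it shows that the metric underlying $\det A_\infty$ computes $\mu(\det(\Ecal))$. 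Since $\deg(\det(\Ecal))=\deg(\Ecal)$, this last statement says exactly that $A_\infty$ computes the slope of $\det(\Ecal)$, which completes the proof.

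I expect the genuine work to be concentrated in the second paragraph: one must verify that taking top exterior powers commutes with each step of the limiting construction — the perturbation of polarizations (immediate in rank one), the complex and unitary gauge fixing, and especially the Uhlenbeck compactness together with the location of the bubbling set — so that the limit associated to $\det(\Ecal)$ is literally $\det A_\infty$ and not merely some admissible Hermitian-Yang-Mills connection in the same Chern class. Everything downstream is a direct citation of Lemma~\ref{DUY for rank 1}.
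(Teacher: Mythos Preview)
Your proposal is correct and matches the paper's intended argument. The paper offers no separate proof: it writes ``In particular, for general rank, this gives'' and states the corollary, leaving implicit precisely the check you spell out --- that the determinant of the perturbed sequence $(A_i)$ is the perturbed sequence for $\det(\Ecal)$, so that Lemma~\ref{DUY for rank 1} applies verbatim with $(\det(\Ecal),\det A_\infty)$ in the role of $(\Ecal,A_\infty)$.

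One minor shortcut worth noting: the paper has already constructed, immediately before Lemma~\ref{DUY for rank 1}, the nontrivial map $\Phi:\Ecal_\infty\to\Ecal$ for general rank via Proposition~\ref{nontrivial limit}. Hence one can simply take $\det\Phi:\det(\Ecal_\infty)\to\det(\Ecal)$ as the required nontrivial map, rather than re-running Proposition~\ref{nontrivial limit} for the determinant sequence. Since the determinant of the normalized identity section on $\Ecal$ is, up to a scalar, the normalized identity section on $\det(\Ecal)$, the two routes produce the same map and the distinction is cosmetic. Your careful verification that the Uhlenbeck limit and bubbling locus behave well under $\wedge^r$ is exactly what the paper suppresses.
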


The proof can be concluded by showing
\begin{prop}\label{Limit is semistable}
$\Phi:  \Ecal_\infty \rightarrow \Ecal$ is an isomorphism.
\end{prop}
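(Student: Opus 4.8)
The plan is to prove that $\Phi$ has full rank $r=\rank\Ecal$, and then to promote this to an isomorphism using Corollary \ref{Isomorphic determinant}.

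First I would argue by contradiction that $\Phi$ has full rank. Suppose the generic rank $k$ of $\imag\Phi$ satisfies $1\leq k<r$. By Siu's extension theorem \cite{Siu:75}, the saturation of $\imag\Phi$ inside $\Ecal$ over $X^{reg}\setminus\Sigma$ extends across $\Sigma$ and across $X^s$ (both of codimension at least two) to a reflexive subsheaf $\Gcal\subset\Ecal$ of rank $k$, and stability of $\Ecal$ forces $\mu(\Gcal)<\mu(\Ecal)$. Taking $k$-th exterior powers of the factorization $\Ecal_\infty\twoheadrightarrow\imag\Phi\hookrightarrow\Gcal$ and passing to reflexive hulls produces a nonzero holomorphic map $\sigma\colon\wedge^{k}\Ecal_\infty\to\det\Gcal$, where $\det\Gcal:=(\wedge^{k}\Gcal)^{**}$; equivalently, $\sigma$ is a nonzero global holomorphic section of $\Hcal:=\Hom(\wedge^{k}\Ecal_\infty,\det\Gcal)$ over the complement of an analytic set $Z'\supset Z$ of codimension at least two. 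Now $\wedge^{k}\Ecal_\infty$ carries the admissible Hermitian--Yang--Mills connection $\wedge^{k}A_\infty$, and by Lemma \ref{DUY for rank 1} the rank-one sheaf $\det\Gcal$ carries an admissible Hermitian--Yang--Mills metric computing $\deg\Gcal$, so $\Hcal$ inherits an admissible Hermitian--Yang--Mills structure whose Einstein constant $\lambda_{\Hcal}$ is proportional to $\deg\Gcal-k\mu(\Ecal_\infty)$. Since $\det\Ecal_\infty\cong\det\Ecal$ and $A_\infty$ computes $\mu(\Ecal)$ by Corollary \ref{Isomorphic determinant}, we get $\deg\Gcal-k\mu(\Ecal_\infty)=k(\mu(\Gcal)-\mu(\Ecal))<0$, hence $\lambda_{\Hcal}<0$.

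Next I would run the Weitzenb\"ock and maximum-principle argument from the proof of Corollary \ref{HE uniqueness}: for the holomorphic section $\sigma$ of the Hermitian--Yang--Mills bundle $\Hcal$ one has
$$
\Delta|\sigma|^{2}=2|\nabla\sigma|^{2}-\lambda_{\Hcal}|\sigma|^{2}\ \geq\ -\lambda_{\Hcal}|\sigma|^{2}\ \geq\ 0
$$
on $X\setminus Z'$. By Corollary \ref{L infinity bound}, $|\sigma|^{2}$ is bounded; a bounded subharmonic function extends subharmonically across the codimension $\geq 2$ set $Z'$, and a bounded subharmonic function on the compact balanced manifold $X$ is constant (integrate $\Delta|\sigma|^{2}$ against $\omega_1\wedge\cdots\omega_{n-1}$, using that this form is closed). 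Therefore $\Delta|\sigma|^{2}\equiv 0$, which forces $\lambda_{\Hcal}|\sigma|^{2}\equiv 0$ and hence $\sigma\equiv 0$ since $\lambda_{\Hcal}<0$, contradicting $\sigma\neq 0$. So $k=r$. Once $\Phi$ has full rank it is injective (its kernel is a torsion subsheaf of the torsion-free $\Ecal_\infty$), so $\det\Phi\colon\det\Ecal_\infty\to\det\Ecal$ is a nonzero map of rank-one reflexive sheaves which have the same slope by Corollary \ref{Isomorphic determinant}; by Lemma \ref{Nontrivial map must be isomorphism} it is an isomorphism. Hence $\det\Phi$ is nowhere vanishing on $X\setminus Z$, and Cramer's rule $\Phi^{-1}=\operatorname{adj}(\Phi)/\det\Phi$ exhibits a holomorphic inverse, so $\Phi$ is an isomorphism over $X\setminus Z$.

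The main obstacle is the first step. In the absence of Chern--Weil theory on the singular base, the inequality $\lambda_{\Hcal}<0$ is not automatic: it relies on genuinely knowing that $\det\Gcal$ admits an admissible Hermitian--Yang--Mills metric that \emph{computes its slope} (this is exactly the rank-one case, Lemma \ref{DUY for rank 1}, which is why that case is proved first) and that $A_\infty$ computes $\mu(\Ecal)$ (Corollary \ref{Isomorphic determinant}). A secondary but necessary technical check is that $\sigma$ really is a global holomorphic section of an \emph{admissible} Hermitian--Yang--Mills bundle, i.e. with $L^{2}$ curvature and defined off a codimension-two analytic set, so that Corollary \ref{L infinity bound} applies; this in turn uses that $\Gcal$ (and hence $\det\Gcal$) extends across $X^s$ by Siu's theorem and that the non-locally-free locus of $\det\Gcal$ has codimension at least two.
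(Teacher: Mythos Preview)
Your proposal is correct and follows essentially the same approach as the paper's proof: extend the saturation $\Gcal$ of $\imag\Phi$ by Siu's theorem, use stability to get $\mu(\Gcal)<\mu(\Ecal)$, apply the rank-one case (Lemma \ref{DUY for rank 1}) to equip $\Hom(\wedge^{k}\Ecal_\infty,\det\Gcal)$ with an admissible Hermitian--Yang--Mills metric of negative Einstein constant, invoke Corollary \ref{L infinity bound} to bound the section $\wedge^{k}\Phi$, integrate the Weitzenb\"ock identity to force a contradiction, and finish via $\det\Phi$ being nowhere vanishing. One small remark: your phrasing ``a bounded subharmonic function on the compact balanced manifold $X$ is constant'' is imprecise since $X$ is singular; the paper (and your parenthetical) handle this correctly by a cut-off/integration argument against the closed form $\omega_1\wedge\cdots\wedge\omega_{n-1}$ rather than by a maximum principle on $X$.
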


\begin{proof}
Let $\Gcal$ denote the saturated subsheaf of $\Ecal$ given by the image of $\Phi$ in $\Ecal$. Following the argument in \cite[Section 7]{UhlenbeckYau:86}, by Siu's theorem (\cite{Siu:75}), $\Gcal$ can be extended to be a reflexive sheaf over $X$. Indeed, it suffices to prove it locally. We can assume $\mathcal{G}$ is a saturated subsheaf of $\Ocal^{\oplus N}$ over some open subset $U$ away from $\Sigma$. Then $\mathcal{G}\subset \Ocal^{\oplus N}$ defines a  map $f$ from the base to $Gr(N, N-\rank \Gcal)$ away from some codimension two subvariety $Z'$. By Siu's theorem (\cite[Page $441$]{Siu:75}), we know the graph $$\Gamma_f\subset (U\setminus \Sigma \cup X^s) \times Gr(N, N-\rank\Gcal)$$
can be extended to be a subvariety of $U\times Gr(N, N-\rank\Gcal)$ by taking its closure $\overline{\Gamma_f}$. Let $\pi_1: \overline{\Gamma_f} \rightarrow U$ and $\pi_2: \overline{\Gamma_f} \rightarrow Gr(N, N-\rank \Gcal)$ denote the natural projections. Let $\widetilde{\Fcal}$ denote the  tautological locally free sheaf over $Gr(N, N-\rank \Gcal)$. Then $((\pi_1)_*\pi_2^*\widetilde{\Fcal})^{**}$ gives an extension for $\Gcal$. Assume $\rank\Gcal <\rank \Ecal$. Since $\Ecal$ is stable, $\mu(\Gcal)<\mu(\Ecal)$. By considering 
$$
\Fcal:=\Hom(\wedge^{\rank \Gcal}\Ecal_\infty,  \det \Gcal) 
$$
 we can reduce to the following setting by Corollary \ref{DUY for rank 1}: a sheaf $\Fcal$ defined over $X^{reg}$ admits an admissible Hermitian-Yang-Mills metric with negative Einstein constant and a nonzero global section $s:=\wedge^{\rank \Gcal} \Phi$. By Corollary \ref{L infinity bound}, we know $s\in L^\infty$. Then we can use 
$$
\Delta |s|^2 =2 |\nabla s|^2-(\mu(\Gcal)-\mu(\Ecal)) |s|^2.
$$
By doing a cut-off argument, we know $\int_X \Delta |s|^2 =0$. This is a contradiction since the integral of the right hand side is strictly positive. In particular, $\Phi$ has full rank at a generic point. By Corollary \ref{Isomorphic determinant}, $\det\Phi$ is nowhere vanishing, thus $\Phi$ is an isomorphism.
\end{proof}

In particular, this gives 
\begin{cor}\label{Empty bubbling set}
$\Sigma=\emptyset$.
\end{cor}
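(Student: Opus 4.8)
The plan is to combine Proposition~\ref{Limit is semistable} with the Bando--Siu removable singularity theorem (Theorem~\ref{Extension}): once the limiting holomorphic bundle is known to be $\Ecal$ itself, the connection $A_\infty$ must extend smoothly across the would-be bubbling locus, leaving no room for energy concentration.

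First I would observe that $A_\infty$ is admissible. It is Hermitian--Yang--Mills over $X\setminus Z$ as a smooth limit of Hermitian--Yang--Mills connections with convergent Einstein constants, and standard arguments (the Hermitian--Yang--Mills equation together with convergence of the relevant Chern numbers against the perturbed Kähler classes) give a uniform bound $\int_{\hat X}|F_{A_i}|^2\dvol_i\le C$, so Fatou's lemma yields $\int_{X\setminus Z}|F_{A_\infty}|^2<\infty$. By Theorem~\ref{Extension}, $A_\infty$ therefore defines a reflexive sheaf $\Ecal_\infty^{\mathrm{ext}}$ on $X^{reg}$, and the metric $H$ extends smoothly wherever $\Ecal_\infty^{\mathrm{ext}}$ is locally free. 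By Proposition~\ref{Limit is semistable} the map $\Phi$ is an isomorphism, so $\Ecal_\infty^{\mathrm{ext}}\cong\Ecal$, which is locally free on $X^{reg}$ away from a subvariety of codimension at least three.

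Suppose now $\Sigma\neq\emptyset$. By the structure of the Uhlenbeck limit (\cite{ChenWentworth:21a}) it then contains a codimension-two component $\Sigma_1$ along which a strictly positive bubbling measure is concentrated. Since the non-locally-free locus of $\Ecal$ has codimension at least three, $\Ecal_\infty^{\mathrm{ext}}\cong\Ecal$ is locally free at a generic point $z\in\Sigma_1$, so by Theorem~\ref{Extension} the connection $A_\infty$ extends to a \emph{smooth} Hermitian--Yang--Mills connection near $z$. Near $z$ the resolution $p$ is an isomorphism and the ambient metric is genuinely Kähler, so the $\epsilon$-regularity and monotonicity estimates for Hermitian--Yang--Mills connections apply to the sequence $A_i$; as $A_i\to A_\infty$ smoothly on a punctured neighborhood of $z$ while $A_\infty$ is smooth at $z$, the density $\lim_{r\to 0}\liminf_i r^{4-2n}\int_{B_r(z)}|F_{A_i}|^2\dvol_i$ must coincide with that of the smooth limit $A_\infty$ at $z$, which is $0$, contradicting $z\in\Sigma$. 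Equivalently, the densities $\Delta(A_i)\wedge(p^*\omega_{i_1}+i^{-1}\theta)\wedge\cdots(p^*\omega_{i_{n-2}}+i^{-1}\theta)$ (nonnegative by the Hodge--Riemann inequality~(\ref{Hodge Riemann property})) would acquire a strictly positive atom along $\Sigma_1\subset X^{reg}$ in the weak-$*$ limit, whereas near $\Sigma_1$ the limiting bundle is $\hat\Ecal$ and no second Chern class is lost. Hence $\Sigma=\emptyset$.

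I expect the main obstacle to be precisely this last implication: upgrading ``$A_\infty$ extends smoothly at $z$'' to ``the sequence $A_i$ does not bubble at $z$''. This is a removable-singularity statement for the entire sequence, and carrying it out rigorously requires the quantitative $\epsilon$-regularity and monotonicity package for Hermitian--Yang--Mills connections on the resolution, used locally over $X^{reg}$ so that the exceptional divisor of $p$ --- and the failure of a global Chern--Weil formula for $c_2$ over $X$ --- do not intervene.
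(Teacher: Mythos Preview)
Your plan reaches the right destination but by a harder road than necessary, and the hard step you flag is a real gap rather than a technicality. Knowing that $A_\infty$ extends smoothly at a generic point $z\in\Sigma_1$ does \emph{not} by itself preclude bubbling of the sequence at $z$: in Uhlenbeck compactness the limit can be perfectly smooth while a quantum of energy escapes (think of instantons on $S^4$ concentrating at a point and leaving behind the flat connection). So the sentence ``the density \ldots\ must coincide with that of the smooth limit $A_\infty$ at $z$'' is exactly the assertion to be proved, not a consequence of $\epsilon$-regularity or monotonicity. Your alternative route via the nonnegative Radon measures $\Delta(A_i)\wedge(\cdots)$ is closer, but to rule out a positive atom along $\Sigma_1$ you would need to match the total mass in the limit against a Chern--Weil integral for $A_\infty$, and as you note there is no global Chern--Weil formula for $c_2$ available over $X$; doing it locally on $\hat X$ near $\Sigma_1$ still requires controlling boundary terms uniformly in $i$, which brings you back to the same missing estimate.

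The paper sidesteps this entirely by working with the \emph{metrics} rather than the connections. Recall that $\Phi$ was produced as the limit of the normalized identity maps $\id\in\Hom(\Ecal,\Ecal)$ measured in $H_i^*\otimes H'$. Proposition~\ref{Limit is semistable} says $\Phi$ is an isomorphism, hence nondegenerate at every point of $X^{reg}\setminus\mathrm{Sing}(\Ecal)$; unwinding what this means for the sequence, the (properly normalized) metrics $H_i$ are locally uniformly equivalent to the fixed background $H'$ away from $X^s\cup\mathrm{Sing}(\Ecal)$. A local $C^0$ bound on the metrics then bootstraps, via the standard elliptic estimates for the Hermitian--Yang--Mills equation (\cite{BandoSiu:94}), to local smooth convergence of $H_i$, and hence of the Chern connections $A_i$. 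Smooth convergence of $A_i$ on $X^{reg}\setminus\mathrm{Sing}(\Ecal)$ immediately gives $\Sigma\subset\mathrm{Sing}(\Ecal)$, which has codimension at least three, so $\Sigma=\emptyset$. This argument never needs to compare energy densities of the sequence with those of the limit; the isomorphism $\Phi$ does all the work.
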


\begin{proof}
Indeed, by Proposition \ref{Limit is semistable}, we know away from $X^s\cup \text{Sing}(\Ecal)$, the sequence of Hermitian-Yang-Mills metrics $H_{i}$ are actually locally uniformly equivalent if we normalize properly. In particular, fixing a background metric, we have a local uniform $C^0$ bound away from $X^s\cup \text{Sing}(\Ecal)$. Then by the elliptic estimates for Hermitian-Yang-Mills metric (\cite{BandoSiu:94}), we have local uniform higher order estimates for the sequence of metrics as well. In particular, $H_i$ converges smoothly away from $X^s\cup \text{Sing}(\Ecal)$. The conclusion follows.
\end{proof}

As a direct corollary,  we have the following
\begin{cor}\label{Cor3.5}
If a reflexive sheaf $\Fcal$ over $(X, \omega_1 \wedge\cdots \omega_{n-1})$ admits an admissible Hermitian-Yang-Mills metric $H$, it is polystable. 
\end{cor}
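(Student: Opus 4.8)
The plan is to prove polystability by induction on $r:=\rank\Fcal$. The base case $r=1$ is immediate, since a rank one reflexive sheaf has no saturated subsheaf of intermediate rank and is therefore vacuously stable. For the inductive step I would proceed in three stages: first show that the Einstein constant $\lambda$ of $H$ computes $\mu(\Fcal)$; then show $\Fcal$ is semistable; then show that equality of slopes with a saturated subsheaf forces a holomorphic orthogonal splitting into pieces that again carry admissible Hermitian-Yang-Mills metrics, to which the inductive hypothesis applies.

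For the first stage, the induced metric $\det H$ on $\det\Fcal$ has curvature $\tr F_H$, hence finite $L^2$ norm, and satisfies $\sqrt{-1}\Lambda_\omega F_{\det H}=r\lambda$; so it is an admissible Hermitian-Yang-Mills metric on the rank one (hence automatically stable) reflexive sheaf $\det\Fcal$. By Corollary~\ref{HE uniqueness} such a metric is unique up to scaling, so it agrees up to scaling with the one produced in Lemma~\ref{DUY for rank 1}, which computes $\mu(\det\Fcal)=\deg\Fcal$. Unwinding the Hermitian-Yang-Mills equation then gives $\deg\Fcal=\frac{r\lambda}{2\pi n}\int_X\omega^n$, i.e. $\mu(\Fcal)=\frac{\lambda}{2\pi n}\int_X\omega^n$.

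For the second stage, let $\Gcal\subset\Fcal$ be a saturated subsheaf, $s=\rank\Gcal$, and let $W\subset X$ be the union of $X^s$, the locus where $H$ is undefined, $\mathrm{Sing}(\Gcal)$, and the locus where $\Fcal$ is not locally free; each of these has complex codimension at least two. Over $X\setminus W$, $\Gcal$ is a holomorphic subbundle with smooth orthogonal projection $\pi$, and the standard Chern--Weil identity with second fundamental form, together with $\sqrt{-1}\Lambda_\omega F_H=\lambda\,\id$ and the first stage, yields
\[
\deg\Gcal \;=\; s\,\mu(\Fcal)\;-\;\frac{1}{2\pi n}\int_{X\setminus W}|\bar\partial\pi|^2\,\omega^n\;\le\; s\,\mu(\Fcal),
\]
hence $\mu(\Gcal)\le\mu(\Fcal)$ and $\Fcal$ is semistable. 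Establishing this identity rigorously --- that the intrinsic degree of $\Gcal$ defined via a resolution as in Proposition~\ref{Well-definedness of slope} equals the curvature integral over $X\setminus W$, and in particular that $\bar\partial\pi\in L^2$ --- is the main obstacle. I expect it to follow, as in \cite{UhlenbeckYau:86} and \cite{BandoSiu:94}, from a cutoff argument exploiting that $W$ has codimension at least two, that $\int_{X^{reg}}|F_H|^2<\infty$, and the global Sobolev inequality on $X^{reg}$ from the proof of Corollary~\ref{L infinity bound}, with identity (\ref{n-1 forms}) used to match the intrinsic degree to the integral over $X\setminus W$.

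For the third stage, if $\mu(\Gcal)=\mu(\Fcal)$ then the displayed identity forces $\bar\partial\pi\equiv 0$ on $X\setminus W$, so $\pi$ is a holomorphic, self-adjoint, $\nabla_H$-parallel projection and $\Fcal|_{X\setminus W}$ splits holomorphically and orthogonally as $\Gcal|_{X\setminus W}\oplus\ker\pi$. Pushing forward under the inclusion $X\setminus W\hookrightarrow X$, and using that $\Fcal$ is reflexive and $W$ has codimension at least two, this extends to a decomposition $\Fcal\cong\Gcal\oplus\Gcal'$ into reflexive sheaves of rank strictly less than $r$, and $H$ restricts to an admissible Hermitian-Yang-Mills metric on each summand (same Einstein constant, and $L^2$ curvature since each summand's curvature is a block of $F_H$). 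By the inductive hypothesis $\Gcal$ and $\Gcal'$ are polystable, hence so is $\Fcal$. If, on the other hand, no nonzero proper saturated subsheaf of $\Fcal$ has slope equal to $\mu(\Fcal)$, then by the second stage $\Fcal$ is stable. In either case $\Fcal$ is polystable, which completes the induction.
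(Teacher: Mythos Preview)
Your first stage agrees with the paper. The genuine gap is in the second stage: the identity
\[
\deg\Gcal \;=\; s\,\mu(\Fcal)\;-\;\frac{1}{2\pi n}\int_{X\setminus W}|\bar\partial\pi|^2\,\omega^n
\]
presupposes that the metric induced by $H$ on $\det\Gcal$ computes $\deg\Gcal$ in the sense of (\ref{Slope definition}). You flag this as ``the main obstacle'' and say you expect it to follow, but this is exactly the point the paper is built to avoid. The induced metric on $\det\Gcal$ is \emph{not} Hermitian--Yang--Mills, so neither Lemma~\ref{DUY for rank 1} nor Proposition~\ref{Slope equality} applies to it; and the cutoff you sketch would have to control $\tr(\pi F_H\pi)$ and $|\bar\partial\pi|^2$ near $X^s$, where the base itself is singular and nothing about $\pi$ or $H$ is prescribed. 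The introduction of the paper singles out precisely this failure of a subsheaf Chern--Weil formula as the reason the ``easy'' direction is hard here.

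The paper sidesteps the issue entirely by a different mechanism. Rather than computing $\deg\Gcal$ via the second fundamental form, it picks a \emph{stable} subsheaf $\Fcal_1\subset\Fcal$ with $\mu(\Fcal_1)\ge\mu(\Fcal)$ and applies the singular Donaldson--Uhlenbeck--Yau theorem just proved to put an admissible Hermitian--Yang--Mills metric $H_1$ on $\Fcal_1$; by construction the Einstein constant of $H_1$ equals $\mu(\Fcal_1)$. Then one runs the Weitzenb\"ock argument of Corollary~\ref{HE uniqueness} on the inclusion $\iota\in H^0(\Hom(\Fcal_1,\Fcal))$ with the metric $H_1^*\otimes H$: Corollary~\ref{L infinity bound} gives $\iota\in L^\infty$, and integrating $\Delta|\iota|^2=2|\nabla\iota|^2-(\lambda-\lambda_1)|\iota|^2$ forces $\nabla\iota=0$ and $\mu(\Fcal_1)=\mu(\Fcal)$, hence $\Fcal\cong\Fcal_1\oplus(\Fcal/\Fcal_1)^{**}$ with both summands again carrying admissible HYM metrics. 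Induction finishes. The point is that both Einstein constants are known to compute the slopes because both metrics are HYM metrics produced by the theorem, so no Chern--Weil identity for subsheaves is ever needed.
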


\begin{proof}
We first show that $H$ computes the slope of $\Fcal$. Indeed, by the singular Donaldson-Uhlenbeck-Yau theorem for the line bundle case, there exists a Hermitian-Yang-Mills metric $H_0$ on $\det(\Fcal)$ which computes the slope of $\det(\Fcal)$. Now  by Corollary \ref{HE uniqueness}, this defines the same Hermitian-Yang-Mills connection as $H$, in particular $H$ computes the slope of $\Fcal$. Let $\Fcal_1\subset \Fcal$ be a stable subsheaf of $\Fcal$ with $\mu(\Fcal_1)\geq \mu(\Fcal)$. By the Singular Donaldson-Uhlenbeck-Yau theorem, there exists an admissible Hermitian-Yang-Mills metric $H_1$ on $\Fcal_1$. Similar as the proof of Corollary \ref{HE uniqueness}, we can conclude that the map $\Fcal_1  \rightarrow \Fcal$ is parallel with respect to the connection defined by $H_1$ and $H$ on $\Hom(\Fcal_1, \Fcal)$ and $\mu(\Fcal_1)= \mu(\Fcal)$.  In particular, we know 
$$\Fcal=\Fcal_1 \oplus (\Fcal_1/\Fcal)^{**}.
$$ The conclusion now follows from induction.  
\end{proof}

In particular, Corollary \ref{Tensor being polystable} follows directly from this.

\section{Bogomolov-Gieseker inequality}

\subsection{General version}\label{Characterization of the Bogomolov-Gieseker inequality}
We separate the discussions into two cases. 
\subsubsection{Locally free resolutions}
Fix any K\"ahler resolution $p: \hat{X} \rightarrow X$ so that $\hat{\Ecal}=(p^*\Ecal)^{**}$ is locally free. Then
$$
\begin{aligned}
&\int_{\hat X} (2r c_2(\hat \Ecal)-(r-1)c_1(\hat \Ecal)^2) \wedge p^*\omega_{i_1} \wedge \cdots p^*\omega_{i_{n-2}}\\
=&\lim_i \int_{\hat X} (2r c_2(\hat \Ecal)-(r-1)c_1(\hat \Ecal)^2)\wedge (p^*\omega_{i_1}+i^{-1} \theta) \wedge \cdots (p^*\omega_{i_{n-2}}+i^{-1} \theta)\\
=& \lim_i \int_{\hat X} (2r c_2(A_i)-(r-1)c_1(A_i)^2)\wedge (p^*\omega_{i_1}+i^{-1} \theta) \wedge \cdots (p^*\omega_{i_{n-2}}+i^{-1} \theta)\\
\geq &\int_X (2r c_2(A_\infty)-(r-1)c_1(A_\infty)^2)\wedge \omega_{i_1} \wedge \cdots
 \omega_{i_{n-2}}.
\end{aligned}
$$
The first equality is trivial while the second follows from the Chern-Weil theory in the smooth case. For the last inequality, it follows from the  curvature of the induced connection on $\Hom(\hat\Ecal,\hat\Ecal)$ being primitive, thus 
$$(2r c_2(A_i)-(r-1)c_1(A_i)^2) \wedge (p^*\omega_{i_1}+i^{-1} \theta) \wedge \cdots (p^*\omega_{i_{n-2}}+i^{-1} \theta)$$ 
defines a sequence of Radon measures on $\hat X$ (see Equation \ref{Hodge Riemann property}). Furthermore, it has uniformly bounded mass. Then the inequality follows from Fatou's lemma. Since the limiting Hermitian-Yang-Mills connection is independent of the resolutions by Corollary \ref{HE uniqueness}, the conclusion follows.  About the case of equality, it follows from exactly the same argument as  \cite[Corollary 1.3]{ChenWentworth:21c}.
\subsubsection{General case} Fix a general K\"ahler resolutions $p:\hat{X} \rightarrow X$. Our main theorem implies there exists a family of admissible Hermitian-Yang-Mills metrics on $(p^*\Ecal)^{**}$ on $\hat X$ with respect to the perturbed metrics. Since over $\hat{X}$ the quantities 
$$(2r c_2(A_i)-(r-1)c_1(A_i)^2$$
given by the admissible Hermitian-Yang-Mills connections defines a sequence of closed currents (see \cite[Proposition $46$]{ChenWentworth:21a}), we know the quantities in the Bogomolov-Gieseker inequality can still be computed by the admissible Hermitian-Yang-Mills connections on $(p^*\Ecal)^{**}$ over $\hat{X}.$ Furthermore, exactly the same argument as the smooth case implies the family of admissible Hermitian-Yang-Mills connections converges to the admissible Hermitian-Yang-Mills connections on $\Ecal$ we obtained. So the same argument as above gives the statement about general resolutions.

\subsection{Various simplifications} 
Now we discuss various cases where the Bogomolov-Gieseker inequality could be simplified and one can extract more from the gauge theoretical picture when shrinking the exceptional divisor. 

\subsubsection{Over a normal surface} We first give a proof of Corollary \ref{Cor1.6}. The equality 
$$
\Delta(\Ecal) 
=(2rc_2(\Ecal)-(r-1)c_1(\Ecal)^2)\cap X
$$
follows from Proposition \ref{Pull-back minimizes}. Corollary \ref{Minimal resolution} follows from Corollary \ref{Bogomolov-Gieseker inequality} and Proposition \ref{Prop2.12}.

\subsubsection{When $X$ is smooth in codimension two} The equality 
$$
\begin{aligned}
&(2rc_2(\Ecal)-(r-1)c_1(\Ecal)^2).[\omega_{i_1}] \cdots  [\omega_{i_{n-2}}]\\
=&(2rc_2(\hat\Ecal)-(r-1)c_1(\hat\Ecal)^2).[p^*\omega_{i_1}] \cdots  [p^*\omega_{i_{n-2}}]\\
\end{aligned}
$$
follows from Proposition \ref{Prop2.13}. It remains to show that the Chern-Weil formula holds when $X$ has isolated singularities, i.e. 
$$
\begin{aligned}
&(2rc_2(\hat\Ecal)-(r-1)c_1(\hat\Ecal)^2).[p^*\omega_{i_1}] \cdots  [p^*\omega_{i_{n-2}}]\\
=& \int_{X} (2rc_2(H)-(r-1)c_1(H)^2)\wedge \omega_{i_1}\wedge \cdots \omega_{i_{n-2}}
\end{aligned}
$$
for any resolution $p:\hat{X}\rightarrow X$. For this, by definition, we can choose an embedding $U\rightarrow \CBbb^N$ near $x$ so that $\omega_i=\sqrt{-1}\partial\bar{\partial}(\rho_i|_U)$ for any $i$ where $\rho_i$ is a smooth function near $x$ in $\CBbb^n$. In particular, 
$$
\omega_{i_1}\wedge \cdots \omega_{i_{n-2}}|_U=d(\psi|_U)
$$
for some $\psi$ smooth on $\CBbb^N$, thus by cutting-off near the isolated singularities, we can globally write 
$$
\omega_{i_1}\wedge \cdots \omega_{i_{n-2}}=d\psi_0 +\Omega_0
$$
where $\psi_0$ is a restriction of form from $\CBbb^n$  that is smooth supported near $x$. By \cite[Lemma $2.5$]{ChenWentworth:21c}, we know 
$$
\int_{X} (2rc_2(H)-(r-1)c_1(H)^2)\wedge \omega_{i_1}\wedge \cdots \omega_{i_{n-2}}=\int_X \Omega_0.
$$
The conclusion follows.

\section{Bogomolov-Gieseker inequality for semistable sheaves} 
Now we go back to the general set-up and assume $\Ecal$ is a semistable torsion free sheaf over a normal variety $(X,\omega_1\wedge \cdots \omega_{n-1})$ endowed with $(n-1)$ K\"ahler metrics. The goal is to prove Theorem \ref{Bogomolov-Gieseker inequality}. For this, we first prove the inequality for semistable sheaves

\begin{prop}[Gieseker-Bogomolov inequality for semistable sheaves]\label{Proposition 5.1}
Given any semistable reflexive sheaf $\Ecal$ over $(X, \omega_1\wedge \cdots \omega_{n-1})$, the Gieseker-Bogomolov inequality holds i.e. 
$$
\Delta(\Ecal)[\omega_1].\cdots [\omega_{n-1}] \geq 0 
$$
\end{prop}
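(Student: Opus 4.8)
The plan is to reduce to a single K\"ahler resolution of $X$ and there to combine the Bogomolov--Gieseker inequality for stable sheaves (already available) with Bogomolov's elementary Chern-class identity and a weak Hodge--Riemann inequality for the pulled-back classes. By definition $\Delta(\Ecal)\cdot[\omega_{i_1}]\cdots[\omega_{i_{n-2}}]=\inf_{p}\inf_{\hat{\Ecal}\in\Ebold_p}\Delta(\hat{\Ecal})\cdot p^*[\omega_{i_1}]\cdots p^*[\omega_{i_{n-2}}]$, with $p$ ranging over the K\"ahler resolutions of $X$, so it suffices to show $\Delta(\hat{\Ecal})\cdot p^*[\omega_{i_1}]\cdots p^*[\omega_{i_{n-2}}]\ge 0$ for one fixed resolution $p\colon\hat{X}\to X$ and every $\hat{\Ecal}\in\Ebold_p$. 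The first point is that any such $\hat{\Ecal}$ is slope-semistable on $\hat{X}$ with respect to $p^*\omega_1\wedge\cdots p^*\omega_{n-1}$: its slope equals $\mu(\Ecal)$ by the invariance of degree within $\Ebold_p$ (Proposition \ref{Well-definedness of slope}), and a destabilizing saturated subsheaf would, after push-forward and passage to the reflexive hull, destabilize $\Ecal$ --- this is the push-forward argument of Proposition \ref{Openess of stability}.

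\textbf{Stable graded pieces.} Next, choose a Jordan--H\"older filtration $0=\hat{\Ecal}_0\subset\hat{\Ecal}_1\subset\cdots\subset\hat{\Ecal}_m=\hat{\Ecal}$ of $\hat{\Ecal}$ relative to $p^*\omega_1\wedge\cdots p^*\omega_{n-1}$, whose graded pieces $\hat{\Qcal}_k:=\hat{\Ecal}_k/\hat{\Ecal}_{k-1}$ are torsion free, stable with respect to this class, and all of slope $\mu(\Ecal)$; the existence of such a filtration rests on the boundedness result of \cite{Toma:19}. By openness of stability (again \cite{Toma:19}, as in Proposition \ref{Openess of stability}), each $\hat{\Qcal}_k$ is stable with respect to the genuinely K\"ahler balanced class $(p^*\omega_1+s^{-1}\theta)\wedge\cdots(p^*\omega_{n-1}+s^{-1}\theta)$ on $\hat{X}$ for all $s\gg 1$. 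Hence the Bogomolov--Gieseker inequality for stable reflexive sheaves on the compact K\"ahler manifold $\hat{X}$ (Section \ref{DUY for mutipolarizations}; \cite{BandoSiu:94}), applied to $\hat{\Qcal}_k^{**}$, gives
\begin{equation*}
\Delta(\hat{\Qcal}_k^{**})\cdot(p^*\omega_{i_1}+s^{-1}\theta)\cdots(p^*\omega_{i_{n-2}}+s^{-1}\theta)\ge 0 .
\end{equation*}
Letting $s\to\infty$, and noting that passing to the reflexive hull only lowers $c_2$ by an effective codimension-two cycle --- which pairs non-negatively with the nef product $p^*[\omega_{i_1}]\cdots p^*[\omega_{i_{n-2}}]$ --- we obtain $\Delta(\hat{\Qcal}_k)\cdot p^*[\omega_{i_1}]\cdots p^*[\omega_{i_{n-2}}]\ge 0$ for every $k$.

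\textbf{Assembling.} Writing $\Delta(\Fcal)=c_1(\Fcal)^2-2\,\rank(\Fcal)\,\ch_2(\Fcal)$ and using additivity of $c_1$ and $\ch_2$ along the filtration, one obtains Bogomolov's identity
\begin{equation*}
\frac{\Delta(\hat{\Ecal})}{\rank\hat{\Ecal}}=\sum_{k=1}^{m}\frac{\Delta(\hat{\Qcal}_k)}{\rank\hat{\Qcal}_k}-\sum_{k=2}^{m}\frac{R_{k-1}r_k}{R_k}\,\xi_k^2,\qquad \xi_k:=\frac{c_1(\hat{\Ecal}_{k-1})}{R_{k-1}}-\frac{c_1(\hat{\Qcal}_k)}{r_k},
\end{equation*}
where $r_k=\rank\hat{\Qcal}_k$ and $R_k=\rank\hat{\Ecal}_k$. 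Each $\xi_k$ is a real $(1,1)$-class with $\xi_k\cdot p^*\omega_1\wedge\cdots p^*\omega_{n-1}=0$, because every term of the filtration has slope $\mu(\Ecal)$; thus $\xi_k$ is primitive for the balanced product. The weak Hodge--Riemann property for the $(n-1)$-tuple of nef classes $p^*[\omega_1],\dots,p^*[\omega_{n-1}]$ --- obtained from \cite{DinhNguyen:06} (which extends Timorin's mixed Hodge--Riemann relations to several K\"ahler classes, cf.\ \cite{Timorin:98}) by letting $s\to\infty$ in the K\"ahler perturbations $p^*\omega_j+s^{-1}\theta$ --- then gives $\xi_k^2\cdot p^*[\omega_{i_1}]\cdots p^*[\omega_{i_{n-2}}]\le 0$. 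Pairing the identity with $p^*[\omega_{i_1}]\cdots p^*[\omega_{i_{n-2}}]$: the first sum is non-negative by the previous step and the second sum contributes non-negatively, whence $\Delta(\hat{\Ecal})\cdot p^*[\omega_{i_1}]\cdots p^*[\omega_{i_{n-2}}]\ge 0$, as required.

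\textbf{Main obstacle.} I expect the last point to be the crux: proving $\xi^2\cdot p^*[\omega_{i_1}]\cdots p^*[\omega_{i_{n-2}}]\le 0$ for a class $\xi$ of vanishing balanced degree when the polarizing classes $p^*[\omega_j]$ are only nef pull-backs from the singular $X$ rather than K\"ahler on $\hat{X}$. Strict mixed Hodge--Riemann relations do hold for the perturbed K\"ahler tuples $(p^*\omega_j+s^{-1}\theta)_j$ by \cite{DinhNguyen:06,Timorin:98}; the work is to verify that the failure of $\xi$ to be primitive for the perturbed tuple is $O(s^{-1})$ and washes out in the limit --- equivalently, that the relevant quadratic form degenerates in a controlled way as $s\to\infty$. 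The other ingredients (semistability of $\hat{\Ecal}$ on the resolution, existence of the Jordan--H\"older filtration, openness of stability, Bogomolov's identity, and the Bogomolov--Gieseker inequality for the stable graded pieces) are standard or already established in earlier sections.
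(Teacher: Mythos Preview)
Your proposal is correct and follows essentially the same strategy as the paper: pass to a resolution, use the Bogomolov--Gieseker inequality for the stable graded pieces of a Jordan--H\"older filtration, combine via Bogomolov's Chern-class identity, and handle the cross terms by a weak Hodge--Riemann inequality for the pulled-back (nef) classes obtained as a limit of the genuine Hodge--Riemann relations for the K\"ahler perturbations.

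Two small differences are worth noting. First, the paper takes the Jordan--H\"older filtration of $\Ecal$ on $X$ itself (where the polarizing classes are honest K\"ahler) and then transfers it to $\hat{X}$, rather than arguing semistability of $\hat{\Ecal}$ on $\hat{X}$ for the degenerate class and taking a JH filtration there; this sidesteps any worry about what ``stable graded pieces'' means for the degenerate polarization, and lets Proposition~\ref{Openess of stability} apply verbatim to each graded piece. Second, for your ``main obstacle'' the paper gives the concrete trick you are gesturing at: for each pair $(s,t)$ choose constants $c_{st}^i$ so that $\bigl(\tfrac{\gamma_s}{r_s}-\tfrac{\gamma_t}{r_t}-c_{st}^i[p^*\omega_{i_{n-1}}]\bigr)$ is primitive for the perturbed tuple $[p^*\omega_{i_1}+i^{-1}\theta]\cdots[p^*\omega_{i_{n-2}}+i^{-1}\theta]$, apply the strict mixed Hodge--Riemann relation of \cite{DinhNguyen:06} to this corrected class, and then observe $c_{st}^i\to 0$ since the uncorrected degree is already zero in the limit. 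This makes precise your ``$O(s^{-1})$ washout'' and completes the argument without further work. The paper also uses the symmetric form $\sum_{s<t}\tfrac{r_sr_t}{r}\bigl(\tfrac{\gamma_s}{r_s}-\tfrac{\gamma_t}{r_t}\bigr)^2$ of the identity rather than your telescoping one, but these are of course equivalent.
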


\begin{proof}
Fix $\hat{\Ecal}\in \Ebold_p$ for some resolution $p: \hat{X} \rightarrow X$. Let 
$$
0\subset \Ecal_1\subset \cdots \Ecal_m=\Ecal
$$
be a Jordan-H\"older filtration for $\Ecal$. Then this naturally gives a filtration for 
$$
0 \subset \hat{\Ecal}_1 \subset \cdots \hat{\Ecal}_m=\hat{\Ecal}
$$
so that $\hat{\Ecal}_k/\hat{\Ecal}_{k-1}$ is torsion free for any $k$ and $\hat{\Ecal}_k|_{X^{reg}}\cong \Ecal_k|_{X^{reg}}$.  By Proposition \ref{Openess of stability}, we know for any $k$, $\hat{\Ecal}_k/\hat{\Ecal}_{k-1}$ is stable with respect to $[p^*\omega_1+i^{-1}\theta ] \wedge \cdots [p^*\omega_{n-1}+i^{-1}\theta]$. Thus 
$$
\Delta(\hat{\Ecal}_k/\hat{\Ecal}_{k-1}).[p^*\omega_1+i^{-1}\theta ] \wedge \cdots [p^*\omega_{n-1}+i^{-1}\theta]\geq 0.
$$
Denote the Chern classes of $ \hat{\Ecal}_k/\hat{\Ecal}_{k-1}$ (resp. $\hat{\Ecal}$) by $\gamma_k$ (resp. $\gamma$), and the  rank of $ \hat{\Ecal}_k/\hat{\Ecal}_{k-1}$ (resp. $\hat{\Ecal}$) by $r_k$ (resp. $r$). Then $r=\sum_k \gamma_k$, and $\gamma=\sum_{k}r_k$. By definition, we have 
$$
\sum_k \frac{\Delta(\hat{\Ecal}_k/\hat{\Ecal}_{k-1})}{r_k}-\frac{\Delta(\hat{\Ecal})}{r}= \sum_{s<t}\frac{1}{r} r_sr_t (\frac{\gamma_s}{r_s}-\frac{\gamma_t}{r_t})^2.
$$
By assumption, we know 
$$
(\frac{\gamma_s}{r_s}-\frac{\gamma_t}{r_t})[p^*\omega_1] \cdots [p^*\omega_{n-1}]=0.
$$
To finish the proof, it suffices to prove that 
\begin{equation}
(\frac{\gamma_s}{r_s}-\frac{\gamma_t}{r_t})^2 [p^*\omega_{i_1}] \cdots [p^*\omega_{i_{n-1}}]\leq 0
\end{equation}
In the smooth case, this follows from \cite{DinhNguyen:06} which generalize the classical Hodge-Riemann theorem over K\"ahler manifolds to the case of multiple K\"ahler metrics (\cite{Timorin:98}). Let $c_{st}^i$ so that 
$$
(\frac{\gamma_s}{r_s}-\frac{\gamma_t}{r_t}-c_{st}^i[p^*\omega_{i_{n-1}}])[p^*\omega_{i_1}+i^{-1}\theta] \cdots [p^*\omega_{i_{n-2}}+i^{-1}\theta]=0
$$
which implies 
$$
(\frac{\gamma_s}{r_s}-\frac{\gamma_t}{r_t}-c_{st}^i[p^*\omega_{i_n}])(\frac{\gamma_s}{r_s}-\frac{\gamma_t}{r_t}-c_{st}^i[p^*\omega_{i_n}])[p^*\omega_{i_1}+i^{-1}\theta] \cdots [p^*\omega_{i_{n-2}}+i^{-1}\theta]\leq 0.
$$
Since $c_{st}^i \rightarrow 0$ as $i\rightarrow \infty$, by taking the limit of the equation above, we have
$$
(\frac{\gamma_s}{r_s}-\frac{\gamma_t}{r_t})(\frac{\gamma_s}{r_s}-\frac{\gamma_t}{r_t})[p^*\omega_{i_1}] \cdots [p^*\omega_{i_{n-2}}]\leq 0.
$$ 
The conclusion follows.
\end{proof}

Assume now
$$\Delta(\hat{\Ecal}).[p^*\omega_{i_1}] \cdots [p^*\omega_{i_{n-2}}]=0$$ 
for some $\hat{\Ecal}\in \Ebold_p$ and some resolution $p: \hat{X} \rightarrow X$. To finish the proof of Theorem \ref{Theorem1.16}, it remains to show 
\begin{prop}
$\Ecal$ admits a filtration 
$$
0\subset \Ecal_1 \subset \cdots \Ecal_{m}=\Ecal
$$
so that $\Ecal_{i}/\Ecal_{i-1}$ is torsion free and $(\Ecal_{i}/\Ecal_{i-1})|_{X^{reg}}$ is projectively flat. 
\end{prop}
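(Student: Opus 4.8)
The plan is to combine a Jordan--Hölder filtration of $\Ecal$ with the discriminant identity already used in the proof of Proposition~\ref{Proposition 5.1}, and then to apply the equality case of Corollary~\ref{Bogomolov-Gieseker inequality} to each stable graded piece.

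First I would fix a Jordan--Hölder filtration $0\subset\Ecal_1\subset\cdots\subset\Ecal_m=\Ecal$ with every $\Ecal_k$ saturated in $\Ecal$, so that each quotient $Q_k:=\Ecal_k/\Ecal_{k-1}$ is torsion free and stable of slope $\mu(\Ecal)$. Exactly as in the proof of Proposition~\ref{Proposition 5.1} this induces a filtration $0\subset\hat{\Ecal}_1\subset\cdots\subset\hat{\Ecal}_m=\hat{\Ecal}$ on $\hat X$ with $\hat{\Ecal}_k|_{X^{reg}}\cong\Ecal_k|_{X^{reg}}$ and torsion-free quotients $\hat{Q}_k:=\hat{\Ecal}_k/\hat{\Ecal}_{k-1}$ (passing to a further resolution, which affects none of the intersection numbers below, one may even take the $\hat{Q}_k$ locally free). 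Writing $\gamma_k=c_1(\hat{Q}_k)$, $r_k=\rank Q_k$ and $r=\rank\Ecal$, the same computation as there gives the identity
$$
\sum_{k}\frac{\Delta(\hat{Q}_k)}{r_k}-\frac{\Delta(\hat{\Ecal})}{r}=\sum_{s<t}\frac{r_sr_t}{r}\Bigl(\frac{\gamma_s}{r_s}-\frac{\gamma_t}{r_t}\Bigr)^2 .
$$

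Next I would pair this identity with $P:=[p^*\omega_{i_1}]\cdots[p^*\omega_{i_{n-2}}]$. By hypothesis the second term on the left contributes $\Delta(\hat{\Ecal})\cdot P=0$. Each $Q_k$ is stable, so by Theorem~\ref{Singular DUY theorem} it carries an admissible Hermitian--Yang--Mills metric, and Corollary~\ref{Bogomolov-Gieseker inequality} together with $\Delta(Q_k)[\omega_{i_1}]\cdots[\omega_{i_{n-2}}]\le\Delta(\hat{Q}_k)\cdot P$ (from the infimum definition of the discriminant) shows $\Delta(\hat{Q}_k)\cdot P\ge0$. On the other hand, the weak Hodge--Riemann property for multiple semipositive Kähler classes on $\hat X$ invoked in the proof of Proposition~\ref{Proposition 5.1} (following \cite{DinhNguyen:06}) gives $\bigl(\tfrac{\gamma_s}{r_s}-\tfrac{\gamma_t}{r_t}\bigr)^2\cdot P\le0$ for all $s<t$. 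Thus, paired with $P$, the left-hand side of the identity is a sum of nonnegative terms, the right-hand side a sum of nonpositive terms, and the two sides agree; hence both vanish, and in particular $\Delta(\hat{Q}_k)\cdot P=0$ for every $k$.

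Finally, for each $k$ this forces $0\le\Delta(Q_k)[\omega_{i_1}]\cdots[\omega_{i_{n-2}}]\le\Delta(\hat{Q}_k)\cdot P=0$, so $\Delta(Q_k)[\omega_{i_1}]\cdots[\omega_{i_{n-2}}]=0$; the equality case of Corollary~\ref{Bogomolov-Gieseker inequality} applied to the stable sheaf $Q_k$ then shows that $(\Ecal_k/\Ecal_{k-1})|_{X^{reg}}$ is projectively flat, i.e. defined by a representation $\rho:\pi_1(X^{reg})\to\PU(m_k)$ with $m_k=\rank(\Ecal_k/\Ecal_{k-1})$, and $0\subset\Ecal_1\subset\cdots\subset\Ecal_m=\Ecal$ is the required filtration. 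Granting Theorem~\ref{Singular DUY theorem} and Corollary~\ref{Bogomolov-Gieseker inequality}, I expect the only delicate point to be the comparison $\Delta(Q_k)[\omega_{i_1}]\cdots[\omega_{i_{n-2}}]\le\Delta(\hat{Q}_k)\cdot P$, which transfers the termwise vanishing on the chosen resolution to the intrinsic discriminant so that the (intrinsically phrased) equality characterization can be applied; the genuinely new analytic input, the weak Hodge--Riemann inequality of \cite{DinhNguyen:06} for the possibly only semipositive classes $[p^*\omega_{i_j}]$, has already been set up in the proof of Proposition~\ref{Proposition 5.1}.
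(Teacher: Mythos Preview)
Your overall strategy---pairing the discriminant identity with $P=[p^*\omega_{i_1}]\cdots[p^*\omega_{i_{n-2}}]$ and sandwiching a nonnegative left side against a nonpositive right side---is exactly the mechanism the paper uses. But the step where you ``pass to a further resolution'' to make every $\hat Q_k$ locally free is not valid, and this is not a technicality: it is precisely the point where the real work lies.

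Any resolution $p':\hat X'\to X$ is an isomorphism over $X^{reg}$, so the induced graded pieces satisfy $\hat Q_k'|_{X^{reg}}\cong Q_k|_{X^{reg}}$. Since $Q_k=\Ecal_k/\Ecal_{k-1}$ is a priori only torsion free on $X^{reg}$, you cannot force $\hat Q_k'$ to be locally free by blowing up along the exceptional locus alone; and if you blow up over $X^{reg}$ the result is no longer a resolution of $X$, so the infimum definition of the discriminant no longer applies to it. Consequently $\hat Q_k\notin\Ebold_p(Q_k)$ in general, and the inequality $\Delta(Q_k)[\omega_{i_1}]\cdots[\omega_{i_{n-2}}]\le\Delta(\hat Q_k)\cdot P$ that you flag as ``the only delicate point'' does not follow from the infimum definition alone.

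What is actually needed is a two-part argument that the paper carries out inductively. First, one compares the torsion-free graded piece with its reflexive hull: from $0\to\hat Q_k\to\hat Q_k^{**}\to\tau_k\to0$ one has $\Delta(\hat Q_k)\cdot P-\Delta(\hat Q_k^{**})\cdot P=2r_k\,\ch_2(\tau_k)\cdot P\ge0$, because $\ch_2(\tau_k)$ is represented by an effective codimension-two cycle; this (and only this) justifies $\Delta(\hat Q_k)\cdot P\ge\Delta(\hat Q_k^{**})\cdot P\ge\Delta(Q_k^{**})[\omega_{i_1}]\cdots[\omega_{i_{n-2}}]\ge0$. Second, even after concluding that each term vanishes, the equality case of Corollary~\ref{Bogomolov-Gieseker inequality} only gives that $Q_k^{**}|_{X^{reg}}$ is projectively flat. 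To obtain the stated conclusion that $Q_k|_{X^{reg}}$ itself is projectively flat (hence locally free on $X^{reg}$), one must use the equality case of the first comparison, which forces $\tau_k$ to have no codimension-two support over $X^{reg}$, and then a local splitting argument (as in \cite[Proposition~2.3]{SibleyWentworth:15}) using that both the sub and the reflexive hull of the quotient are already locally free over $X^{reg}$. This last step is genuinely inductive, and it is the reason the paper peels off one stable piece at a time rather than treating the whole Jordan--H\"older filtration at once.
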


\begin{proof}
Let $\Ecal_1$ (resp. $\hat{\Ecal}_1$) be a stable subsheaf of $\Ecal$ (resp. $\hat{\Ecal}$) having the same slope as $\Ecal$ (resp. $\hat{\Ecal}$) so that $\Ecal/\Ecal_1$ (resp. $\hat{\Ecal}/\hat{\Ecal}_1$) is torsion free stable with the same slope as $\Ecal$ (resp. $\hat{\Ecal})$, and 
$$
\hat{\Ecal}_1|_{X^{reg}}\cong \Ecal_1|_{X^{reg}}.
$$ 
By induction, we need to show 
\begin{enumerate}
\item $\hat{\Ecal}_1$ is projectively flat away from the exceptional divisor.

\item $\hat{\Ecal}/\hat{\Ecal_1}$ is reflexive away from the exceptional divisor and 
$$
\Delta((\hat{\Ecal}/\hat{\Ecal}_1)^{**}).[p^*\omega_{i_1}] \cdots [p^*\omega_{i_{n-2}}]=0.
$$
\end{enumerate}
We first prove $(1)$. Since $\hat{\Ecal}_1$ and $\hat{\Ecal}/\hat{\Ecal}_1$ are stable, by Corollary \ref{Bogomolov-Gieseker inequality}, we know
$$
\Delta(\hat{\Ecal}_1).[p^*\omega_{i_1}] \cdots [p^*\omega_{i_{n-2}}]\geq 0.
$$
and 
$$
\Delta((\hat{\Ecal}/\hat{\Ecal}_1)^{**}).[p^*\omega_{i_1}] \cdots [p^*\omega_{i_{n-2}}]\geq 0
$$
By Corollary \ref{Bogomolov-Gieseker inequality}, it suffices to show 
$$
\Delta(\hat{\Ecal}_1).[p^*\omega_{i_1}] \cdots [p^*\omega_{i_{n-2}}]=0.
$$
As the proof of Proposition \ref{Proposition 5.1}, we know 
\begin{equation}
(\frac{\Delta(\hat{\Ecal}_1)}{r_1}+\frac{\Delta(\hat{\Ecal}/\hat{\Ecal}_1)}{r_1}-\frac{\Delta(\hat{\Ecal})}{r}) [p^*\omega_{i_1}] \cdots [p^*\omega_{i_{n-1}}]\leq 0
\end{equation}
which combined with the inequality above implies 
$$\Delta(\hat{\Ecal}_1)=0$$ 
and 
$$
\frac{\Delta(\hat{\Ecal}/\hat{\Ecal}_1)}{r_1}[p^*\omega_{i_1}] \cdots [p^*\omega_{i_{n-1}}]= 0
$$
if we can show 
\begin{equation}\label{Equation5.3}
\Delta(\hat{\Ecal}/\hat{\Ecal}_1)[p^*\omega_{i_1}] \cdots [p^*\omega_{i_{n-2}}] \geq \Delta((\hat{\Ecal}/\hat{\Ecal}_1)^{**}).[p^*\omega_{i_1}] \cdots [p^*\omega_{i_{n-2}}].
\end{equation}
Assume this, by Corollary \ref{Bogomolov-Gieseker inequality}, we know $\hat{\Ecal}_1$ is projectively flat over $X^{reg}$. Now we prove Equation \ref{Equation5.3}. Indeed, we have 
$$
0\rightarrow\hat{\Ecal}/\hat{\Ecal}_1 \rightarrow (\hat{\Ecal}/\hat{\Ecal}_1)^{**} \rightarrow \tau \rightarrow 0
$$
where $supp(\tau)$ has codimension at least two. Let $\Sigma_k$ denotes the irreducible pure codimension two components of $supp(\tau)$, and to each $\Sigma_k$, one can associate it with an analytic multiplicity $m_k=h^0(\tau|_\Delta, \Sigma_k)$ where $\Delta$ is a transverse slice of $\Sigma_k$ at a generic point. Then 
$$\ch_2(\tau)=\PD(\sum_k m_k\Sigma_k)$$ 
by \cite[Proposition $3.1$]{SibleyWentworth:15}. We then know from definition that 
$$
\begin{aligned}
&\Delta(\hat{\Ecal}/\hat{\Ecal}_1).[p^*\omega_{i_1}] \cdots [p^*\omega_{i_{n-2}}] -\Delta((\hat{\Ecal}/\hat{\Ecal}_1)^{**}).[p^*\omega_{i_1}] \cdots [p^*\omega_{i_{n-2}}]  \\
=& \sum_k m_k\int_{\Sigma_k} p^*\omega_{i_1} \wedge \cdots \omega_{i_{n-2}} \\
\geq & 0
\end{aligned}
$$
where the equality holds if and only if $\Sigma_k\cap X^{reg}=\emptyset$ for any $k$.

For (2), from the discussion above, we know $\text{supp}(\tau)$ has codimension at least three over $X^{reg}$. Furthermore, we have 
$$\Delta((\hat{\Ecal}/\hat{\Ecal}_1)^{**}).[p^*\omega_{i_1}] \cdots [p^*\omega_{i_{n-2}}]=0.$$
By induction, $(\hat{\Ecal}/\hat{\Ecal}_1)^{**}$ is locally free over $X^{reg}$.  It remains to show $\hat{\Ecal}/\hat{\Ecal}_1$ is reflexive over $X^{reg}$.
This follows from the argument in \cite[Proposition $2.3$]{SibleyWentworth:15}. The key point is that over $X^{reg}$, the short exact sequence 
$$
0\rightarrow \hat{\Ecal}_1 \rightarrow \hat{\Ecal} \rightarrow \hat{\Ecal}/\hat{\Ecal}_1  \rightarrow 0
$$
locally splits away from a codimension three set over $X^{reg}$ since $\hat{\Ecal}_1$ and $(\hat{\Ecal}/\hat{\Ecal}_1)^{**}$ are locally free over $X^{reg}$. Thus this will force $\hat{\Ecal}/\hat{\Ecal}_1$ to be reflexive over $X^{reg}$, i.e. locally free over $X^{reg}$ in this case. 
\end{proof}

\bigskip

\noindent {\bf Data Availability Statement}. Data sharing not applicable to this article as no datasets were generated or analyzed during the current study.

\medskip

\noindent{\bf Conflict of Interest Statement}.
On behalf of all authors, the corresponding author states that there is no conflict of interest.

\bibliography{papers}

\end{document}